 \newtheorem{thm}{Theorem}[section]
 \newtheorem{cor}[thm]{Corollary}
 \newtheorem{lem}[thm]{Lemma}
 \newtheorem{prop}[thm]{Proposition}
 \theoremstyle{definition}
 \newtheorem{defn}[thm]{Definition}
 \theoremstyle{remark}
 \newtheorem{rem}[thm]{Remark}
 \numberwithin{equation}{section}
\def\multiset#1#2{\ensuremath{\left(\kern-.3em\left(\genfrac{}{}
{0pt}{}{#1}{#2}\right)\kern-.3em\right)}}
\begin{document}

%
%
%
%
%
%
%
%
%
\begin{abstract}
We investigate Sperner's labelings of $H^\pi_{k,q}$, the
hypergraph whose hyperedges are facets of the edgewise
triangulation of a $(k-1)$-simplex defined by a permutation
$\pi\in \mathbb{S}_{k-1}$. Mirzakhani and Vondr\' ak showed that
the greedy coloring of $H^{\mathrm{Id}}_{k,q}$ produces the
maximal number of monochromatic hyperedges. The line graph of
$H_{k,q}^\pi$ is built from the copies of the graph $G_\pi$ that
represents which subsets of consecutive numbers of $[k-1]$ are
contiguous in $\pi$. We characterize these graphs in terms of
dissections a regular $k$-gon and also show how they encode the
adjacency relation between a hypersimplex and the facets of its
alcoved triangulation. The natural action of the dihedral group
$D_k$ on a regular $k$-gon and graphs $G_\pi$ extends on the group
of permutations $\mathbb S_{k-1}$. Independent sets of the graphs
$G_\pi$ of the permutations that are not invariant under the
rotation are used to define a class of Sperner's colorings that
produce more monochromatic hyperedges then the greedy colorings.
This colorings are also optimal for a certain permutations.

\end{abstract}

\title[]
 {Sperner's colorings of hypergraphs
arising from edgewise triangulations}

\author[Du\v sko Joji\' c]{Du\v sko Joji\' c}

\address{%
University of Banja Luka, Faculty of Science\\
Mladena Stojanovi\' ca 2, 78 000 Banja Luka\\
Bosnia and Herzegovina}

\email{dusko.jojic@pmf.unibl.org}
\author[Ognjen Papaz]{Ognjen Papaz}

\address{%
University of East Sarajevo, Faculty of Philosophy\\
Alekse \v Santi\'ca 1, 71 420 Pale\\
Bosnia and Herzegovina}

\email{ognjen.papaz@ff.ues.rs.ba}

\subjclass{ 05C15, 05E18, 05C30, 05A05}

\keywords{edgewise triangulation, permutation, Sperner's coloring,
hypergraph, line graph, group action. }

\date{}
\dedicatory{}


\maketitle

\section{Introduction}
\subsection{Edgewise subdivision, hypergraphs, Sperner's labeling}

The edgewise subdivision $T_{k,q}$ of a $(k-1)$-dimensional
simplex $\Delta$ is a specific triangulation that divides each
$d$-dimensional face of $\Delta$ into $q^d$ smaller simplices.
This triangulation, defined in \cite{Edel}, has many applications,
one of which is being a framework for the Hypergraph Labeling
Problem considered by Mirzakhani and Vondrak in \cite{M-V} and
\cite{M-VforM}. We briefly revise the notation defined in
\cite{M-V}. For any $k,q\in \mathbb{N}$ let
$$R_{k,q}=\left\{\mathbf{x}=(x_1,x_2,\ldots,x_{k-1})\in
\mathbb{R}^{k-1}:0\leq x_1\leq x_2\leq \cdots\leq x_{k-1}\leq
q\right\}.$$ The
vertices of this $(k-1)$-simplex are
$$\mathbf{w}_i=(\underbrace{0,\ldots,0}_{k-i},
\underbrace{q,\ldots,q}_{i-1})\textrm{ for all }i\in[k]\textrm{
and } R_{k,q}=
\operatorname{conv}\left\{\mathbf{w}_1,\mathbf{w}_2,\ldots,\mathbf{w}_k\right\}.$$
\noindent The \textit{edgewise subdivision} $T_{k,q}$ of $R_{k,q}$
is a triangulation of $R_{k,q}$ whose vertex set is
$$W_{k,q}=\{\mathbf{v}\in \mathbb{Z}^{k-1}:
0\leq v_1\leq v_2\leq \cdots\leq v_{k-1}\leq q\}.$$
The maximal cells of $T_{k,q}$ are described in the following manner.
A permutation $\pi=\pi_1 \pi_2\ldots\pi_{k-1}\in
 \mathbb{S}_{k-1}$ is \textit{consistent}
 with a vertex $\mathbf{v}\in W_{k,q-1}$ if $i$
 appears before
 $i+1$ in $\pi$
 whenever $v_i=v_{i+1}$.
For a vertex $\mathbf{v}\in W_{k,q-1}$, and a permutation $\pi \in
\mathbb{S}_{k-1}$ that is consistent with $\mathbf{v}$, we let
$F(\mathbf{v},\pi)$ denote $(k-1)$-dimensional simplex
$F(\mathbf{v},\pi)=\operatorname{conv}\left\{\mathbf{v}^{(1)},
\mathbf{v}^{(2)},\ldots,\mathbf{v}^{(k)}\right\}$, where
\begin{equation}\label{E:tjemenahiperivice}
\mathbf{v}^{(1)}=\mathbf{v},
 \mathbf{v}^{(2)}=\mathbf{v}+e_{\pi_{k-1}},
 \ldots\textrm{, }\mathbf{v}^{(k)}=\mathbf{v}+
 e_{\pi_{k-1}}+\cdots+e_{\pi_{1}}=\mathbf{v}+\mathbbm{1}.
\end{equation}
The maximal cells or \textit{facets}
 of the edgewise triangulation $T_{k,q}$ are all
 simplices $ F(\mathbf{v},\pi)$ where $\mathbf{v}\in W_{k,q-1}$, and
 $\pi\in \mathbb{S}_{k-1}$ is consistent with $\mathbf{v}$.
 We say that $F(\mathbf{v},\pi)$ is \emph{a facet of
 type} $\pi$. Informally speaking, $\mathbf{v}$ is the first vertex of
$F(\mathbf{v},\pi)$, and the remaining vertices of this facet are
obtained by increasing coordinates
 of $\mathbf{v}$ one by one,
 reading $\pi$ backwards. This interpretation also gives us
 an ordering of the vertices of $F(\mathbf{v},\pi)$.

Assume that a simplicial complex $K$ is a triangulation of a
$(k-1)$-simplex $\Delta^{k-1}$. A \textit{Sperner labeling} of $K$
is a map $\ell:V(K)\rightarrow [k]$ that assigns different labels
to the vertices lying on the complementary faces of
$\Delta^{k-1}$.

For the edgewise subdivision of $R_{k,q}$, we assign the labels
for the vertices of $R_{k,q}$ by setting
$\ell(\mathbf{w}_i)=k+1-i$. If we assume that $v_0=0$ and $v_k=q$,
then the list of admissible labels for a vertex
$\mathbf{v}=(v_1,v_2,\ldots,v_{k-1})\in W_{k,q}$ in a Sperner's
labeling of $T_{k,q}$ is $L(\textbf{v})=\{i:v_{i-1}<v_i\}\subset
[k]$. Note that a map $\ell: W_{k,q}\rightarrow [k]$ is a
Sperner's labeling of $T_{k,q}$ if and only if
\begin{equation}\label{Spernerov uslov}
\ell(\textbf{v})\in L(\textbf{v}),\ \text{for all}\ \textbf{v}\in
W_{k,q}.
\end{equation}

 \begin{defn} For a fixed $\pi\in \mathbb{S}_{k-1}$, consider
 the $k$-regular hypergraph $H^{\pi}_{k,q}$ whose hyperedegs are
 all facets of $T_{k,q}$ of type $\pi$, i.e.,
 $$E(H^\pi_{k,q})=\{F(\mathbf{v},\pi):\mathbf{v}\in W_{k,q-1},
  \pi\textrm{ is consistent with }\mathbf{v}\}.$$
Note that the vertex set $V(H_{k,q}^\pi)\subset W^\pi_{k,q}$ is
the union of all sets $F(\mathbf{v},\pi)$. The hypergraph
$H_{k,q}^\pi$ is called a \emph{simplex-lattice hypergraph}.

A map $\ell: V(H_{k,q}^\pi)\to[k]$ is called a \emph{Sperner's labeling
of} $H_{k,q}^\pi$ if (\ref{Spernerov uslov}) holds. For a given
Sperner labeling of $H_{k,q}^\pi$, a hyperedge
$F(\mathbf{v}, \pi)$ is called \emph{monochromatic} if all
vertices in $F(\mathbf{v}, \pi)$ have the same label.
\end{defn}

\textbf{Problem 1.} For a given $\pi\in \mathbb{S}_{k-1}$, find a
Sperner labeling of $H^\pi_{k,q}$ that minimizes the number of
non-monochromatic hyperedges. Such a labeling will be called an
\emph{optimal Sperner's labeling} of $H_{k,q}^\pi$.

An instance of Hypergraph Labeling Problem that is discussed in
\cite{M-V} seeks for an optimal Sperner's labeling of the
simplex-lattice hypergraph $H_{k,q}^{\mathrm{Id}}$.

\begin{prop}\label{greedy} (Proposition 2.1. in \cite{M-V})
The greedy coloring (first-choice labeling) $\ell(v)=min
L(\mathbf{v})$ is an optimal Sperner's labeling of the
simplex-lattice hypergraph $H_{k,q}=H_{k,q}^{\mathrm{Id}}$. The
number of non-monochromatic cells in the greedy coloring of
$H_{k,q}^{\mathrm{Id}}$ is $|W_{k,q-1}|-|W_{k,q-2}|={q+k-3\choose
k-2}$.
\end{prop}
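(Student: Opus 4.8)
The plan is to establish the two assertions of the proposition more or less separately: the exact count for the greedy coloring is a short direct computation, while its optimality rests on an injection argument which I expect to be the crux. I would begin by pinning down exactly which facets are monochromatic in the greedy coloring. For any facet $F(\mathbf{v},\mathrm{Id})$ the last vertex is $\mathbf{v}^{(k)}=\mathbf{v}+\mathbbm{1}$, and since $(\mathbf{v}+\mathbbm{1})_0=0<v_1+1=(\mathbf{v}+\mathbbm{1})_1$ we get $1\in L(\mathbf{v}+\mathbbm{1})$, so $\mathbf{v}^{(k)}$ always receives the greedy label $1$. Hence a greedy-monochromatic facet must have color $1$, which forces $1\in L(\mathbf{v}^{(j)})$, i.e.\ $(\mathbf{v}^{(j)})_1>0$, for all $j$; since $(\mathbf{v}^{(j)})_1\in\{v_1,v_1+1\}$ this is equivalent to $v_1\ge 1$, and conversely $v_1\ge 1$ makes every first coordinate positive, hence every greedy label equal to $1$. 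So the greedy-monochromatic facets are precisely the $F(\mathbf{v},\mathrm{Id})$ with $v_1\ge 1$.

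Next comes the count. The map $\mathbf{v}\mapsto\mathbf{v}-\mathbbm{1}$ is a bijection from $\{\mathbf{v}\in W_{k,q-1}:v_1\ge 1\}$ onto $W_{k,q-2}$, so the greedy coloring has exactly $|W_{k,q-2}|$ monochromatic facets. Since every $\mathbf{v}\in W_{k,q-1}$ is consistent with $\mathrm{Id}$, we have $|E(H^{\mathrm{Id}}_{k,q})|=|W_{k,q-1}|$, and therefore the number of non-monochromatic facets equals $|W_{k,q-1}|-|W_{k,q-2}|={q+k-2\choose k-1}-{q+k-3\choose k-1}={q+k-3\choose k-2}$.

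For optimality I would show that \emph{every} Sperner's labeling $\ell$ of $H^{\mathrm{Id}}_{k,q}$ has at most $|W_{k,q-2}|$ monochromatic facets, by constructing an injection $\phi$ from the set of $\ell$-monochromatic facets into $W_{k,q-2}$. An $\ell$-monochromatic facet $F(\mathbf{v},\mathrm{Id})$ has a well-defined color $c=\ell(\mathbf{v})\in[k]$, and since every vertex $\mathbf{v}^{(j)}$ must admit the label $c$, inspecting the vertices of the facet yields $v_1\ge 1$ when $c=1$, $v_{c-1}<v_c$ when $1<c<k$, and $v_{k-1}\le q-2$ when $c=k$. In each case one checks that
\[
\phi(F(\mathbf{v},\mathrm{Id})):=\mathbf{v}-(e_c+e_{c+1}+\cdots+e_{k-1})
\]
(the empty sum when $c=k$) is a weakly increasing integer vector with all entries in $[0,q-2]$, hence an element of $W_{k,q-2}$. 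For injectivity, suppose $\ell$-monochromatic facets $F(\mathbf{v},\mathrm{Id})$ and $F(\mathbf{v}',\mathrm{Id})$ with colors $c<c'$ have the same image $\mathbf{u}$; then $\mathbf{v}=\mathbf{u}+\sum_{i=c}^{k-1}e_i$ and $\mathbf{v}'=\mathbf{u}+\sum_{i=c'}^{k-1}e_i$, and a direct computation from (\ref{E:tjemenahiperivice}) shows that $\mathbf{w}:=\mathbf{u}+\sum_{i=c}^{k-1}e_i+\sum_{i=c'}^{k-1}e_i$ equals both $\mathbf{v}^{(k-c'+1)}$ and $(\mathbf{v}')^{(k-c+1)}$; thus $\mathbf{w}$ is a common vertex of the two facets, and monochromaticity forces $c=\ell(\mathbf{w})=c'$, a contradiction. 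Hence $\phi$ is injective, no Sperner's labeling has more monochromatic facets than the greedy coloring, and so greedy is optimal.

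I expect the main obstacle to be this last step: guessing that the shift attached to color $c$ must be $e_c+e_{c+1}+\cdots+e_{k-1}$, and recognizing that two $\ell$-monochromatic facets mapped to the same $\mathbf{u}$ are forced to share the specific vertex $\mathbf{w}$ above --- once that vertex has been identified the contradiction is immediate. The verification that $\phi$ lands in $W_{k,q-2}$, together with the entire analysis of the greedy coloring and the counting identity, is routine.
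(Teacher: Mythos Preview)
The paper does not give its own proof of this proposition; it is simply quoted as Proposition~2.1 of \cite{M-V} and used as a black box. So there is nothing in the present paper to compare against.

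That said, your argument is correct. The analysis of the greedy labeling is accurate: for $\pi=\mathrm{Id}$ the first coordinate of $\mathbf{v}^{(j)}$ equals $v_1$ for $j<k$ and $v_1+1$ for $j=k$, so the facet is greedy-monochromatic iff $v_1\ge 1$, and the bijection $\mathbf{v}\mapsto\mathbf{v}-\mathbbm{1}$ with $W_{k,q-2}$ gives the count. For optimality, your injection $\phi(F(\mathbf{v},\mathrm{Id}))=\mathbf{v}-\sum_{i=c}^{k-1}e_i$ is well defined in $W_{k,q-2}$ by the inequalities you extracted from $c\in L(\mathbf{v}^{(1)})$ (for $1\le c<k$) and $k\in L(\mathbf{v}^{(2)})$ (for $c=k$), and the injectivity step is clean: if two monochromatic facets of colors $c<c'$ had the same image $\mathbf{u}$, then $\mathbf{w}=\mathbf{u}+\sum_{i\ge c}e_i+\sum_{i\ge c'}e_i$ is simultaneously $\mathbf{v}^{(k-c'+1)}$ and $(\mathbf{v}')^{(k-c+1)}$, forcing $\ell(\mathbf{w})=c=c'$. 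This is precisely the style of argument Mirzakhani and Vondr\'ak use, so your proposal would serve perfectly well as a self-contained proof here.
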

 A permutation $\pi\in \mathbb{S}_{k-1}$ is consistent
with a vertex $\mathbf{v}=(v_1,\ldots,v_{k-1})\in W_{k,q-1}$ if
and only if $v_i<v_{i+1}$ whenever $i+1$ precedes $i$ in $\pi$.
This notion of consistency between vertices of $W_{k,q-1}$ and the
permutations from $\mathbb{S}_{k-1}$ motivates the following
definition.
\begin{defn}
  For a permutation $\pi=\pi_1\pi_2\ldots\pi_{k-1}\in
\mathbb{S}_{k-1}$ we define \emph{the set of adjacent inversions}
of $\pi$ as $S(\pi)=\big\{i\in[k-2]:i+1 \textrm{ appears before
}i\textrm{ in }\pi\big\}.$
\end{defn} Therefore, a fixed permutation $\pi$ is consistent with
$\mathbf{v}=(v_1,v_2,\ldots,v_{k-1})$ from $W_{k,q-1}$
($\mathbf{v}$ can be the first vertex for a facet of type $\pi$)
if and only if
\begin{equation}\label{E:prvotjeme}
v_i< v_{i+1}\textrm{ for all }i \in S(\pi).
\end{equation}

\begin{prop}\label{P:numberofhyperedges} The number of
hyperedges in $H^\pi_{k,q}$ is ${k+q-2-|S(\pi)|\choose k-1}$.
\end{prop}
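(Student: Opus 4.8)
The plan is to reduce the statement to a lattice-point count and then evaluate that count by an explicit shift bijection. First I would observe that a hyperedge $F(\mathbf{v},\pi)$ of $H^\pi_{k,q}$ is completely determined by its first vertex $\mathbf{v}$: indeed, by \eqref{E:tjemenahiperivice} the remaining vertices are $\mathbf{v}+e_{\pi_{k-1}},\dots,\mathbf{v}+\mathbbm{1}$, so $\mathbf{v}=F(\mathbf{v},\pi)$ is recovered as the coordinatewise minimum of the vertex set of the facet (equivalently, the unique vertex of the facet all of whose admissible-label conditions are ``tight from below''). Hence the map $F(\mathbf{v},\pi)\mapsto\mathbf{v}$ is a bijection from $E(H^\pi_{k,q})$ onto the set of $\mathbf{v}\in W_{k,q-1}$ that are consistent with $\pi$, and by \eqref{E:prvotjeme} this is exactly
\[
A_\pi=\bigl\{\mathbf{v}=(v_1,\dots,v_{k-1})\in\mathbb{Z}^{k-1}:0\le v_1\le v_2\le\cdots\le v_{k-1}\le q-1,\ v_i<v_{i+1}\text{ for all }i\in S(\pi)\bigr\}.
\]
So it remains to show $|A_\pi|=\binom{k+q-2-|S(\pi)|}{k-1}$.

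Next I would evaluate $|A_\pi|$ by absorbing the strict inequalities into a change of variables. Write $s=|S(\pi)|$ and for $j\in[k-1]$ set $c_j=|\{i\in S(\pi):i<j\}|$, the number of forced strict steps occurring strictly before coordinate $j$; note $c_1=0$ and $c_{k-1}\le s$. Define $w_j=v_j-c_j$. A short check shows that for $\mathbf{v}$ satisfying $0\le v_1\le\cdots\le v_{k-1}\le q-1$, the chain of (weak and strict) inequalities defining $A_\pi$ holds if and only if $0\le w_1\le w_2\le\cdots\le w_{k-1}\le q-1-s$, and $\mathbf{v}\mapsto\mathbf{w}$ is a bijection onto the set of such $\mathbf{w}$. (The point is that at a position $i\in S(\pi)$ we have $v_{i+1}\ge v_i+1$ iff $w_{i+1}\ge w_i$, since $c_{i+1}=c_i+1$, while at a position $i\notin S(\pi)$ we have $c_{i+1}=c_i$ so $v_i\le v_{i+1}$ iff $w_i\le w_{i+1}$; the endpoint bounds translate as stated because $c_1=0$ and $v_{k-1}\le q-1$ becomes $w_{k-1}\le q-1-c_{k-1}$, and one checks the effective upper bound is $q-1-s$ after accounting for the remaining forced steps, or equivalently one extends with $v_k=q$, $c_k=s$.)

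Finally, the number of integer sequences $0\le w_1\le w_2\le\cdots\le w_{k-1}\le q-1-s$ is the number of multisets of size $k-1$ drawn from the $q-s$ values $\{0,1,\dots,q-1-s\}$, namely $\multiset{q-s}{k-1}=\binom{(q-s)+(k-1)-1}{k-1}=\binom{k+q-2-s}{k-1}$, which is the claimed formula. I do not expect a genuine obstacle here: the only point needing care is the bookkeeping in the change of variables $w_j=v_j-c_j$, in particular getting the effective upper bound right, which is cleanest to handle by adjoining the virtual coordinates $v_0=0$, $v_k=q$ as already done in the text and tracking $c_0=0$, $c_k=s$.
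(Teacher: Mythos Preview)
Your argument is correct and is essentially identical to the paper's: both use the shift $v_i\mapsto v_i-|\{j\in S(\pi):j<i\}|$ to convert the mixed weak/strict chain into a weakly increasing sequence in $\{0,\dots,q-1-|S(\pi)|\}$, and then count multisets. One small simplification: since $S(\pi)\subseteq[k-2]$, in fact $c_{k-1}=s$ exactly, so the upper bound $w_{k-1}\le q-1-s$ falls out immediately without any ``remaining forced steps'' bookkeeping.
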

\begin{proof}
 For all
$i\in [k-1]$, we apply the transformation $v_i\mapsto v_i-|\{j\in
S(\pi):j<i\}|,$ which effectively removes all the strict
inequalities among the coordinates of $\mathbf{v}$ imposed by
condition (\ref{E:prvotjeme}). After this transformation, the
resulting $(k-1)$-tuple has weakly increasing coordinates that lie
between $0$ and $q-|S(\pi)|-1$.
\end{proof}
\begin{rem}\label{R:blocksinpi} A \textit{block} in a
permutation $\pi\in\mathbb{S}_{k-1}$
is a maximal subsequence of $\pi$ consisting of consecutive
integers from $[k-1]$. If $S(\pi)=\{a_1,a_2,\ldots,a_{m-1}\}$,
then
 $\pi$ has $m$ blocks
$$B_1=\{1,2,\ldots,a_1\},\ldots,
B_{i}=\{a_{i-1}+1 ,\ldots,a_i\},\ldots, B_{m}=\{a_{m-1}+1
,\ldots,k-1\}.$$ Note that the first and the last element of each
block are determined by $S(\pi)$.
\end{rem}
The next statement corresponds to Problem 1.6. in \cite{Pet}.

\begin{prop}\label{P:adjinv=desc}
The set of adjacent inversions of a permutation
$\pi\in\mathbb{S}_{k-1}$ coincides with with the descent set of
its inverse $\pi^{-1}=\pi_1^{-1}\pi_2^{-1}\ldots\pi^{-1}_{k-1}$,
i.e.:
$$ S(\pi)=D(\pi^{-1})=
\big\{\, i:\pi^{-1}_i>\pi^{-1}_{i+1}\big\} \textrm{ for all }
\pi\in\mathbb{S}_{k-1}.$$\end{prop}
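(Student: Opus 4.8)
The plan is to prove the identity by directly unwinding the two definitions, the bridge between them being the elementary observation that the \emph{position} at which a value $j$ occurs in the one-line word $\pi=\pi_1\pi_2\ldots\pi_{k-1}$ is exactly $\pi^{-1}_j$. Indeed, if $\pi^{-1}_j=p$ then $\pi_p=j$, so $j$ sits in slot $p$; conversely the slot of $j$ determines $\pi^{-1}_j$.

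Concretely, I would fix $i\in[k-2]$ and chase the equivalences
\[
i\in S(\pi)\iff i+1\text{ appears before }i\text{ in }\pi
\iff \pi^{-1}_{i+1}<\pi^{-1}_i
\iff \pi^{-1}_i>\pi^{-1}_{i+1}
\iff i\in D(\pi^{-1}),
\]
where the second equivalence is precisely the position remark applied to the values $i$ and $i+1$, and the last is the definition of a descent of $\pi^{-1}$. Since this holds for every $i\in[k-2]$, and both $S(\pi)$ and $D(\pi^{-1})$ are by definition subsets of $[k-2]$ (a permutation of $[k-1]$ has descents only in positions $1,\dots,k-2$), the two sets coincide.

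I would also add a sentence of sanity checking: $\pi^{-1}$ is a genuine permutation in $\mathbb S_{k-1}$, so $D(\pi^{-1})$ is well defined, and the argument is symmetric in the sense that replacing $\pi$ by $\pi^{-1}$ recovers $S(\pi^{-1})=D(\pi)$. There is no real obstacle here; the only thing to be careful about is not to conflate ``the value in position $i$'' with ``the position of value $i$'', which is exactly the distinction between $\pi$ and $\pi^{-1}$ and is the whole content of the statement. Optionally I would illustrate with a small example (e.g. $\pi=2\,3\,1$ in $\mathbb S_3$, where $2$ precedes $1$ but $3$ does not, so $S(\pi)=\{1\}$, while $\pi^{-1}=3\,1\,2$ has its single descent at position $1$) to make the bookkeeping transparent to the reader.
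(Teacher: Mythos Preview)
Your proof is correct: the chain of equivalences is exactly right, and the only point of care—distinguishing the value $\pi_i$ from the position $\pi^{-1}_i$—is the one you flag. The paper itself does not supply a proof of this proposition; it simply states the result and refers the reader to Problem~1.6 in Petersen's \emph{Eulerian numbers}, so your argument fills in what the paper omits. The optional example and the symmetry remark $S(\pi^{-1})=D(\pi)$ are nice touches but not needed for the logic.
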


Consequently, the well-known statistic for the descents of
permutations
$$\beta_T(B_{k-1})=|\{\pi\in \mathbb{S}_{k-1}:S(\pi)=T\}|=
|\{\pi\in \mathbb{S}_{k-1}:D(\pi)=T\}|$$ also counts permutations
with a prescribed set of adjacent inversions. Geometrically,
$\big(\beta_T(B_{k-1})\big)_{S\subset[k-2]}$ corresponds to the
flag-$h$-vector of a $(k-2)$-dimensional simplex, refining the
$h$-vector of barycentric subdivision, see in
\cite{Bjolex},\cite{Stanleybook} and \cite{Tools} for more
details. Moreover, these numbers refine the classical Eulerian
numbers:
$$A(k-1,m)=|\{\pi\in
\mathbb{S}_{k-1}:|D(\pi)|=m\}|=\sum_{ |T |=m}\beta_T(B_{k-1}).$$

\subsection{Sorted families and triangulation of a hypersimplex}

\noindent An ordered pair
$(\{a_1,a_2,\ldots,a_m\},\{b_1,b_2,\ldots,b_m\})$, of $m$-element
subsets of $[k]$, is said to be \emph{sorted} if $a_1\leq b_1\leq
a_2\leq b_2 \leq\ldots\leq a_m\leq b_m$. An ordered family
$(S_1,S_2,\ldots,S_n)$ of $m$-element subsets of $[k]$ is said to
be \emph{sorted} if each pair $(S_i,S_j)$ is sorted for all
$i,j\in[n],i<j$. The concept of sorted families was used by
Sturmfels in \cite{Sturmfels} to describe a triangulation of a
hypersimplex.

Assume that for a permutation $\pi=\pi_1\pi_2\ldots \pi_{k-1}\in
\mathbb{S}_{k-1}$, we have $S(\pi)=\{a_1,a_2,\ldots,a_{m-1}\}$. We
define the ordered family $\mathcal S_\pi=(S_1,S_2,\ldots,S_k)$ as
follows:
\begin{equation}\label{sortirana familija}
S_1=\{1,a_1+1,\ldots,a_{m-1}+1\},\\
 \text{ and }
S_i=\big (S_{i-1}\setminus \{\pi_{i-1}\}\big)\cup\{\pi_{i-1}+1\},
\end{equation}
for all $i=2,3,\ldots,k$. \noindent The family $\mathcal S_\pi$ is
a maximal sorted family of $m$-element subsets of $[k]$. The map
$\pi\mapsto \mathcal S_\pi$ defined above is a bijection between
permutations $\pi\in \mathbb{S}_{k-1}$ with exactly $m-1$ adjacent
inversions and maximal sorted families of $m$-element subsets of
$[k]$. The inverse map is defined by $\mathcal S\mapsto
\pi_1\pi_2\ldots\pi_{k-1},$
 where $\{\pi_i\}=S_{i}\setminus
S_{i+1}$. More details can be found in \cite{Lam}.
\begin{rem}\label{R:rasporedusorted}
Let $(S_1,S_2,\ldots,S_k)$ be a maximal sorted family defined by a
permutation $\pi\in \mathbb{S}_{k-1}$. The distribution of
elements of $[k]$ among the sets $S_i$ is described as follows:
\begin{itemize}
    \item $1\in S_j$ if and only if $1\leq j\leq\pi^{-1}(1)$.
    \item If $i-1\notin S(\pi)$, then $i\in S_j$ if and
    only if $\pi^{-1}(i-1)+1\leq j\leq \pi^{-1}(i)$.
    \item If $i-1\in S(\pi)$, then $i\in S_j$ for all
     $j\leq \pi^{-1}(i)\}$ or
     $j\geq \pi^{-1}(i-1)+1$.
    \item $k\in S_j$ if and only if $\pi^{-1}(k-1)+1\leq j\leq k$.
\end{itemize}
\end{rem}

Recall that the \emph{hypersimplex} $\Delta_{k,m}$ is a
$(k-1)$-dimensional polytope obtained as the intersection of
$k$-dimensional cube $[0,1]^k$ with the hyperplane
$x_1+x_2+\cdots+x_{k}=m$. The vertices of $\Delta_{k,m}$ are
$$\left\{e_S: S\in {[k]\choose m}\right\}\textrm{, where }e_S=
\sum_{i\in S}e_i.$$ The segment $[e_S,e_T]$ is an edge of
$\Delta_{k,m}$ if and only if $|S \triangle T|=2$. For $1<m<k$,
the hypersimplex $\Delta_{k,m}$ has $2k$ facets, given by:
\begin{equation}\label{E:facets}\begin{split}
F_i=\{x\in \Delta_{k,m}:x_i=0\}\cong \Delta_{k-1,m}\textrm{,
}\\
H_i=\{x\in \Delta_{k,m}:x_i=1\}\cong \Delta_{k-1,m-1}\ (i\in[k]).
\end{split}
\end{equation}

\noindent For each $\pi\in\mathbb S_{k-1}$ with $|S(\pi)|=m-1$, we
define the simplex
$\sigma_\pi=\operatorname{conv}\left\{e_{S_1},e_{S_2},\ldots,e_{S_k}\right\}\subset
\Delta_{k,m},$ using the associated maximal sorted family
 $\mathcal{S}_\pi$ described in (\ref{sortirana familija}).
 The seminal paper
\cite{Lam} shows that some several seemingly different
triangulations of a hypersimplex are the same. In particular, this
triangulation can be described in the following way.
\begin{thm}[Theorem 2.7. in \cite{Lam}]\label{T:triangulationofHip}
For integers $k-1\geq m\geq 1$, the hypersimplex $\Delta_{k,m}$
admits a triangulation
  parametrized by
  permutations $\pi \in \mathbb{S}_{k-1}$ with $|S(\pi)| = m - 1$.
That is,
$$\Delta_{k,m}=\bigcup_{\substack{\pi\in
\mathbb{S}_{k-1}\\|S(\pi)|=m-1}}\sigma_\pi.$$
\end{thm}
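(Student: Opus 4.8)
The plan is to establish Theorem~\ref{T:triangulationofHip} by transporting the edgewise triangulation $T_{k,1}$ of the simplex $R_{k,1}$ (equivalently, the structure encoded in the hypergraphs $H^\pi_{k,1}$) onto the hypersimplex $\Delta_{k,m}$ via the characteristic-vector map $S\mapsto e_S$, and checking that this map sends facets of type $\pi$ with $|S(\pi)|=m-1$ exactly to the simplices $\sigma_\pi$. First I would make precise the combinatorial model: the vertices $\mathbf v\in W_{k,q-1}$ together with the ``staircase'' increments $e_{\pi_{k-1}},e_{\pi_{k-1}}+e_{\pi_{k-2}},\dots$ from \eqref{E:tjemenahiperivice} are in bijection, after the coordinate-relabeling $x_i\mapsto x_i-x_{i-1}$ (with $x_0=0$, $x_k=q$) that turns a weakly increasing tuple into a lattice point of the dilated simplex, with chains of subsets $S_1\subsetneq\!\!\!\!\diagdown\ S_2\cdots$ — more precisely with the sorted families $\mathcal S_\pi$ of \eqref{sortirana familija}. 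The content of the theorem is then: (i) the $\sigma_\pi$ are genuine $(k-1)$-simplices (their vertices $e_{S_1},\dots,e_{S_k}$ are affinely independent); (ii) they cover $\Delta_{k,m}$; (iii) they have pairwise disjoint interiors, i.e.\ they glue along common faces.

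For (i), I would use the description in \eqref{sortirana familija}: consecutive sets satisfy $S_{i+1}=(S_i\setminus\{\pi_{i}\})\cup\{\pi_{i}+1\}$, so $e_{S_{i+1}}-e_{S_i}=e_{\pi_i+1}-e_{\pi_i}$, and since $\pi$ is a permutation of $[k-1]$ these $k-1$ difference vectors are $e_{j+1}-e_j$ for $j$ ranging over all of $[k-1]$, which are linearly independent; hence the $e_{S_i}$ span a $(k-1)$-dimensional simplex, and moreover all vertices lie in $\Delta_{k,m}$ because $|S_1|=m$ (it has $1$ together with the $m-1$ elements $a_j+1$) and the moves preserve cardinality. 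For (ii) and (iii) I would argue that the map sending a lattice point $\mathbf v$ of $R_{k,1}$ together with a consistent $\pi$ to the corresponding point of $\Delta_{k,m}$ extends to a piecewise-linear homeomorphism: concretely, a point $x\in\Delta_{k,m}$ has partial sums $y_j=x_1+\cdots+x_j$ lying in $R_{k,m}$ after a shear, and the alcove of the affine Coxeter arrangement of type $A_{k-1}$ containing $x$ — cut out by the hyperplanes $x_i+\cdots+x_j\in\mathbb Z$ — determines a unique facet of the edgewise/alcoved triangulation, whose vertices translate back to a unique sorted family $\mathcal S_\pi$. Checking that the facet so obtained really is $\sigma_\pi$ for some $\pi$ with $|S(\pi)|=m-1$ is where Remark~\ref{R:rasporedusorted} does the bookkeeping: it pins down, element by element, which $S_j$ contains $i$, and one reads off that the ``first vertex'' data $S_1=\{1,a_1+1,\dots,a_{m-1}+1\}$ corresponds to $S(\pi)=\{a_1,\dots,a_{m-1}\}$, so $|S(\pi)|=m-1$ exactly as required.

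The cleanest route, and the one I would actually write, is to \emph{invoke} the equivalence of triangulations. Since the statement is quoted as Theorem~2.7 of \cite{Lam} and \cite{Lam} is precisely the paper establishing that the circuit/sorted-set triangulation, Stanley's triangulation, and the alcoved triangulation of $\Delta_{k,m}$ all coincide, the proof reduces to: (a) recalling Stanley's triangulation of $\Delta_{k,m}$, whose maximal simplices are indexed by lattice paths / sequences and known to triangulate the hypersimplex (this is classical, via the unimodular ``standard'' subdivision of the cube intersected with the slab $\sum x_i=m$); (b) exhibiting the bijection of \eqref{sortirana familija} between those indexing objects and permutations $\pi\in\mathbb S_{k-1}$ with $|S(\pi)|=m-1$, which is stated right after \eqref{sortirana familija} and is a straightforward verification that the two maps are mutually inverse (using $\{\pi_i\}=S_i\setminus S_{i+1}$ together with Remark~\ref{R:rasporedusorted} to see $S(\pi)$ has size $m-1$); and (c) checking the two families of simplices agree geometrically, $\sigma_\pi$ being by definition $\operatorname{conv}\{e_{S_1},\dots,e_{S_k}\}$.

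The main obstacle is (iii)/(ii): proving that the $\sigma_\pi$ actually tile $\Delta_{k,m}$ rather than merely being a collection of valid sub-simplices. Counting is not enough on its own, but it is a useful sanity check that should fall out of Proposition~\ref{P:numberofhyperedges} and the Eulerian-number identity $\sum_{|T|=m-1}\beta_T(B_{k-1})=A(k-1,m)$, matched against the normalized volume of $\Delta_{k,m}$ (the Eulerian number $A(k-1,m)$) — so one still needs the genuinely geometric input that distinct $\sigma_\pi$ have disjoint interiors, which I would get from the alcove picture: each open alcove of the affine arrangement $\{x_i+\cdots+x_j\in\mathbb Z\}$ restricted to the slab lies in exactly one $\sigma_\pi$, because the sorted family $\mathcal S_\pi$ is recovered from the alcove by rounding the partial-sum coordinates, a procedure with a unique output. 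Granting \cite{Lam}, though, this is exactly what Theorem~2.7 there supplies, so in the write-up I would state the bijection, verify it on the nose, and cite \cite{Lam} for the coincidence of the triangulations, keeping the argument short.
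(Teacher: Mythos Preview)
The paper does not supply its own proof of this statement: it is simply quoted as Theorem~2.7 of \cite{Lam}, with only the remark afterwards that $\Delta_{k,m}$ is thereby triangulated into $A(k-1,m-1)$ simplices. Your eventual plan---verify the bijection of \eqref{sortirana familija} and then cite \cite{Lam} for the fact that the $\sigma_\pi$ actually triangulate $\Delta_{k,m}$---is therefore exactly what the paper does, only with more scaffolding. The extra checks you outline (affine independence of the $e_{S_i}$ via $e_{S_{i+1}}-e_{S_i}=e_{\pi_i+1}-e_{\pi_i}$, the alcove description for disjointness of interiors) are correct and would be the ingredients of a self-contained argument, but none of this appears in the present paper. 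One small slip: with the paper's convention $A(k-1,m)=|\{\pi\in\mathbb S_{k-1}:|D(\pi)|=m\}|$, one has $\sum_{|T|=m-1}\beta_T(B_{k-1})=A(k-1,m-1)$, matching the normalized volume of $\Delta_{k,m}$; your sanity check quotes $A(k-1,m)$ instead.
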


Note that $\Delta_{k,m}$ is triangulated into $A(k-1,m-1)$
simplices.

\section{The line graph of $H^\pi_{k,q}$}

The vertices of a hyperedge $F=F(\mathbf{v},\pi)$ are listed in
$(\ref{E:tjemenahiperivice})$. This hyperedge of $H_{k,q}^\pi$ can
be $i$-monochromatic with respect to some Sperner's labeling of
$H_{k,q}^\pi$ if and only if $v_{i-1}<v_{i}$ for all $i\notin
S(\pi)$, and $v_{i-1}<v_{i}-1$ for all $i\in S(\pi)$. Here we
assume that $v_0=0$ and $v_k=q$. Two hyperedges of $H^\pi_{k,q}$
that share a common vertex can both be monochromatic if and only
if all their vertices are colored with the same color. This
motivates us to investigate the line graph of $H^\pi_{k,q}$, where
the hyperedges of $H^\pi_{k,q}$ (cells $F(\mathbf v,\pi)$)
 are identified with its initial vertex $\mathbf{v}$.

\begin{defn}
We let $G^{\pi}_{k,q}$ denote the \emph{line graph} of
$H^\pi_{k,q}$.
\end{defn}

The vertices of $G^{\pi}_{k,q}$ correspond to the hyperedges
(facets) of $H^\pi_{k,q}$. After identifying each cell
$F(\mathbf{v},\pi)$ with its initial vertex $\mathbf{v}$, the
vertex set of $G^{\pi}_{k,q}$ is given by
$W^\pi_{k,q}=V(G^{\pi}_{k,q})= \big\{\mathbf{v}\in
W_{k,q-1}:v_{i+1}>v_i \textrm{ for all }i\in S(\pi)\big\}.$

From Proposition \ref{P:numberofhyperedges}, it follows that the
number of vertices in $G^{\pi}_{k,q}$ is
$$|W^\pi_{k,q}|=|E(H^\pi_{k,q})|={k+q-2-|S(\pi)|\choose k-1}.$$
Two vertices in $G^{\pi}_{k,q}$ are adjacent if and only if the
corresponding facets share a common vertex. From
(\ref{E:tjemenahiperivice}), it follows that $\mathbf{xy}\
(\mathbf{x},\mathbf{y}\in W^\pi_{k,q}$) is an
 edge in
 $G^{\pi}_{k,q}$ if and only if
\begin{equation}\label{ivica}
\mathbf{y}=\mathbf{x}+e_{\pi_a}+
e_{\pi_{a}+1}+\cdots+e_{\pi_{b-1}}, \textrm{ for some }1\leq a<b
\leq k.
\end{equation} In other words, one vertex is obtained
from the other by increasing some of its coordinates whose indices
are consecutive entries of $\pi$. For each pair $1\leq a<b \leq
k$, define the set $E_{a,b}=\{\mathbf{x}\mathbf{y}\in
E(G^\pi_{k,q}):\mathbf{y}=
\mathbf{x}+e_{\pi_a}+\cdots+e_{\pi_{b-1}}\}$. Therefore, the edge
set of $G^\pi_{k,q}$ decomposes as
\begin{equation}\label{E:edgesss}
E(G^\pi_{k,q})=\bigsqcup_{1\leq a<b \leq k}E_{a,b}.
\end{equation}

\begin{prop}\label{P:edgesconsec} For all $i\in[k-1]$ we have that
$$|E_{i,i+1}|=|E_{1,k}|={k+q-3-|S(\pi)|\choose k-1}$$
\end{prop}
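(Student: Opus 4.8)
The plan is to show each set $E_{i,i+1}$ is in bijection with the vertex set of another simplex-lattice hypergraph of the same shape, namely one with $q$ decreased by one relative to what already appeared. Fix $i\in[k-1]$ and consider an edge $\mathbf{x}\mathbf{y}\in E_{i,i+1}$, so that $\mathbf{y}=\mathbf{x}+e_{\pi_i}$. Both $\mathbf{x}$ and $\mathbf{y}$ lie in $W^\pi_{k,q}$, meaning $\mathbf{x},\mathbf{y}\in W_{k,q-1}$ with the strict inequalities $v_{j+1}>v_j$ forced for all $j\in S(\pi)$. I would set $p=\pi_i$ (the coordinate that gets incremented) and record the constraints: since $\mathbf{x}$ and $\mathbf{y}=\mathbf{x}+e_p$ are both weakly increasing tuples bounded by $q-1$, we need $x_{p-1}\le x_p$ and $x_p+1\le x_{p+1}$, i.e. $x_{p-1}\le x_p<x_{p+1}$ (with the usual conventions $x_0=0$, $x_k=q-1$). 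So the edge is determined by its smaller endpoint $\mathbf{x}$, which ranges over weakly increasing integer tuples with $0\le x_1\le\cdots\le x_{k-1}\le q-1$, subject to $x_{j+1}>x_j$ for $j\in S(\pi)$ and the \emph{extra} strict inequality $x_p<x_{p+1}$.

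Next I would argue this extra strict inequality, combined with the ambient bound, is exactly equivalent to a plain bound one unit smaller. If $p-1\notin S(\pi)$ and $p\notin S(\pi)$, the tuple $\mathbf{x}$ satisfies strict inequalities on $S(\pi)\cup\{p\}$; applying the same shift transformation used in the proof of Proposition~\ref{P:numberofhyperedges} (namely $x_j\mapsto x_j-|\{l\in S(\pi)\cup\{p\}: l<j\}|$) converts these into weak inequalities and shows the resulting tuple ranges freely over $0\le x_1\le\cdots\le x_{k-1}\le q-1-(|S(\pi)|+1)=q-2-|S(\pi)|$. By the standard stars-and-bars count this gives $\binom{(k-1)+(q-2-|S(\pi)|)}{k-1}=\binom{k+q-3-|S(\pi)|}{k-1}$, as claimed. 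The case $p\in S(\pi)$ (so the inequality $x_p<x_{p+1}$ is already imposed) cannot occur here, because $p=\pi_i$ and an edge in $E_{i,i+1}$ only requires incrementing coordinate $p$; but one must check $p-1$: if $p-1\in S(\pi)$ the constraint $x_{p-1}<x_p$ is already there, and we are adding $x_p<x_{p+1}$, which is new unless $p\in S(\pi)$ — so in every subcase we add exactly one new strict inequality to the $|S(\pi)|$ already present. I would make this bookkeeping precise and uniform, noting that it does not depend on $i$, which is why all $|E_{i,i+1}|$ coincide.

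For the equality $|E_{1,k}|=|E_{i,i+1}|$ I would proceed analogously: an edge in $E_{1,k}$ has the form $\mathbf{y}=\mathbf{x}+e_{\pi_1}+\cdots+e_{\pi_{k-1}}=\mathbf{x}+\mathbbm{1}$ by (\ref{ivica}), so $\mathbf{y}=\mathbf{x}+\mathbbm{1}$ and the edge is again determined by $\mathbf{x}$. Now $\mathbf{x}\in W_{k,q-1}$ must satisfy $y_{k-1}=x_{k-1}+1\le q-1$, i.e. $x_{k-1}\le q-2$, together with the strict inequalities on $S(\pi)$; there is no new strict inequality, but the bound drops by one, again yielding $\binom{k+q-3-|S(\pi)|}{k-1}$. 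The main obstacle I anticipate is not any deep idea but getting the edge-case bookkeeping airtight: one must verify that incrementing $\pi_i$ in an edge of $E_{i,i+1}$ really does impose the single strict inequality $x_{\pi_i}<x_{\pi_i+1}$ and nothing more, handle the boundary conventions $x_0=0,\ x_k=q-1$ correctly, and confirm the shift transformation is a bijection onto the full set of weakly increasing tuples with the reduced bound. Once that is nailed down, both counts reduce to the same stars-and-bars computation and the proposition follows.
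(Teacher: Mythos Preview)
Your approach is the same as the paper's: identify an edge in $E_{i,i+1}$ with its smaller endpoint $\mathbf{x}$, read off the extra inequality this imposes, and reduce to the shift used in Proposition~\ref{P:numberofhyperedges}. The $E_{1,k}$ case is also handled exactly as the paper does.

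There is, however, a genuine error in your case analysis. You assert that ``the case $p\in S(\pi)$ \dots\ cannot occur here, because $p=\pi_i$''. This is false: nothing prevents $\pi_i\in S(\pi)$. For instance, with $\pi=21\in\mathbb{S}_2$ one has $S(\pi)=\{1\}$ and $\pi_2=1\in S(\pi)$. When $p\in S(\pi)$ you must use that $\mathbf{y}\in W^\pi_{k,q}$, not merely $\mathbf{y}\in W_{k,q-1}$: the requirement $y_p<y_{p+1}$ gives $x_p+1<x_{p+1}$, i.e.\ $x_p<x_{p+1}-1$, which \emph{strengthens} the existing strict inequality rather than adding a new one at a fresh index. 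The paper records exactly this dichotomy: the extra condition is $x_p<x_{p+1}$ if $p\notin S(\pi)$, and $x_p<x_{p+1}-1$ if $p\in S(\pi)$. In either case the shift transformation absorbs one additional unit and the range drops to $\{0,\dots,q-|S(\pi)|-2\}$, so the final count is unchanged; but your justification (``in every subcase we add exactly one new strict inequality'') is not correct as stated, and your union $S(\pi)\cup\{p\}$ in the shift would have the wrong cardinality when $p\in S(\pi)$. Fixing this is easy once you see it: just treat the two cases separately as the paper does, or equivalently shift by one extra unit at position $p$ regardless of whether $p$ already lay in $S(\pi)$.
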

\begin{proof} Assume that $\mathbf{xy}$ is an edge from $E_{i,i+1}$.
In addition to the conditions described in (\ref{E:prvotjeme}),
the vertex $\mathbf{x}$ must satisfy one additional condition:
$$x_i<x_{i+1}\textrm{ if }i \notin S(\pi)
\textrm{,  or } x_i<x_{i+1}-1\textrm{ if }i \in S(\pi).$$
Similarly, if $\mathbf{xy}\in E_{1,k}$, the vertex $\mathbf{x}$
must additionally satisfy $x_{k-1}\leq q-2$. After applying the
transformation that eliminates all strong inequalities as in the
proof of Proposition~\ref{P:numberofhyperedges}, we are left with
choosing $k-1$ weakly increasing coordinates for $\mathbf{x}$,
each lying between $0$ and $q-|S(\pi)|-2$.
\end{proof}

\noindent To enumerate the remaining edges of $G^{\pi}_{k,q}$, we
need to take a closer look at how the sets of consecutive entries
of $\pi$ decompose into unions of subsets of consecutive numbers.
For every $T\subset [k-1]$ we are looking for a decomposition of the
form: $T=T_1\cup T_2\cup\cdots\cup T_c,$ where
$T_i=\{a_i,a_i+1,\ldots,b_i\}$ is a maximal subset of consecutive
numbers contained in $T$. In that case, we say that $T$ decompose
into $c$ subsets of consecutive numbers.

\begin{lem}\label{L:Eab} Assume that for a given permutation
$\pi=\pi_1\pi_2\ldots\pi_{k-1}\in \mathbb{S}_{k-1}$ and $1\leq
a<b\leq k$, the set $T_{a,b}=\{\pi_a,\pi_{a+1},\ldots,\pi_{b-1}\}$
has a decomposition into $c$ subsets of consecutive numbers. Then
the number of edges of $G^\pi_{k,q}$ contained in
 $E_{a,b}$ is
 $$|E_{a,b}|={k+q-c-2-|S(\pi)|\choose k-1}.$$
\end{lem}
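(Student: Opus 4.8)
The plan is to mimic the counting argument from Propositions~\ref{P:numberofhyperedges} and~\ref{P:edgesconsec}, the point being that an edge in $E_{a,b}$ is determined by its initial vertex $\mathbf{x}$, so $|E_{a,b}|$ equals the number of $\mathbf{x}\in W^\pi_{k,q}$ for which $\mathbf{y}=\mathbf{x}+e_{\pi_a}+\cdots+e_{\pi_{b-1}}$ is again a legal initial vertex, i.e. $\mathbf{y}\in W^\pi_{k,q}$. Write $T_{a,b}=T_1\cup\cdots\cup T_c$ with $T_j=\{\alpha_j,\alpha_j+1,\ldots,\beta_j\}$ the maximal runs of consecutive numbers, ordered so that $\beta_j<\alpha_{j+1}-1$. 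Since $\mathbf{y}=\mathbf{x}+\sum_{j=1}^c(e_{\alpha_j}+\cdots+e_{\beta_j})$, first I would record exactly which inequalities on $\mathbf{x}$ are needed. The constraints $v_i<v_{i+1}$ for $i\in S(\pi)$ must hold for both $\mathbf{x}$ and $\mathbf{y}$; I claim the conditions on $\mathbf{y}$ reduce, after cancellation, to the same conditions on $\mathbf{x}$ together with one strict inequality $x_{\beta_j}<x_{\beta_j+1}$ for each run $T_j$ whose top index $\beta_j$ already lies in $S(\pi)$, and the ``boundary'' requirement $y_{k-1}\le q-1$, i.e. $x_{\beta_c}\le q-2$ when $\beta_c=k-1$. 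The crux is therefore a bookkeeping lemma: each maximal run $T_j$ contributes precisely one new strict inequality among the coordinates of $\mathbf{x}$ — either an ``internal'' one at the boundary $\beta_j/\beta_j+1$, or, for the run containing $k-1$ (if any), the cap $x_{k-1}\le q-2$.

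Granting that, I would finish exactly as before: apply the shifting substitution $x_i\mapsto x_i-|\{\,i'< i: i'\text{ is a forced-strict index}\,\}|$, where the forced-strict indices are the $|S(\pi)|$ indices from condition~(\ref{E:prvotjeme}) together with the $c$ new ones coming from the runs. Each of the $c$ new strict inequalities, once present, is ``disjoint'' from the $S(\pi)$-inequalities in the sense needed for the substitution to work (this is where Remark~\ref{R:blocksinpi}, or rather the disjointness of run-tops from $S(\pi)$-forced boundaries, is used): one checks that a run-top $\beta_j$ lying in $S(\pi)$ would force $\pi_{b-1}+1$ to appear before $\pi_{b-1}$ in $\pi$, which combined with $\pi_a,\ldots,\pi_{b-1}$ being a contiguous block of consecutive integers in $\pi$ gives a contradiction — so in fact $\beta_j\notin S(\pi)$ always, and the $c$ new inequalities are genuinely new. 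After the substitution we are choosing $k-1$ weakly increasing integer coordinates ranging over $\{0,1,\ldots,q-|S(\pi)|-c-1\}$, and the number of such tuples is $\binom{(q-|S(\pi)|-c-1)+(k-1)}{k-1}=\binom{k+q-c-2-|S(\pi)|}{k-1}$, as claimed. The degenerate cases $c=1$ with $T_{a,b}$ a single run of consecutive entries recover Proposition~\ref{P:edgesconsec} (there $|E_{i,i+1}|$ and $|E_{1,k}|$, with $c=1$), and $T_{a,b}=\emptyset$ is excluded since $a<b$.

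The main obstacle is the bookkeeping lemma in the first paragraph: carefully verifying that passing from ``$\mathbf{x}\in W^\pi_{k,q}$'' to ``$\mathbf{x},\mathbf{y}\in W^\pi_{k,q}$'' adds exactly $c$ independent strict inequalities, no more and no fewer, and that these do not collide with the $S(\pi)$-inequalities. The subtlety is that $\mathbf{y}$ differs from $\mathbf{x}$ only at the coordinates in $T_{a,b}$, so the inequality $y_i<y_{i+1}$ is automatically implied by $x_i<x_{i+1}$ unless the increment ``jumps down'' across $i\to i+1$, which happens precisely at the top $\beta_j$ of each run and nowhere else; conversely at the bottom $\alpha_j-1\to\alpha_j$ the increment jumps up and the inequality on $\mathbf{y}$ is weaker than on $\mathbf{x}$. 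Once this incrementing pattern is pinned down, the rest is the same standard stars-and-bars computation used throughout the section, so I would spend the bulk of the written proof on the run-structure analysis and then invoke the substitution argument verbatim.
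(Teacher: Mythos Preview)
Your overall strategy matches the paper's exactly: count the $\mathbf{x}\in W^\pi_{k,q}$ for which $\mathbf{y}=\mathbf{x}+\sum_{i=a}^{b-1}e_{\pi_i}$ also lies in $W^\pi_{k,q}$, observe that the extra constraints localize at the tops $\beta_j$ of the $c$ runs, and then apply the shift trick. However, your claim that $\beta_j\notin S(\pi)$ for every run-top is \emph{false}. Take $\pi=312\in\mathbb{S}_3$ (so $S(\pi)=\{2\}$) and $a=3,\,b=4$: then $T_{3,4}=\{\pi_3\}=\{2\}$ has a single run with top $\beta_1=2\in S(\pi)$. Your argument for the claim conflates ``$\pi_a,\ldots,\pi_{b-1}$ occupy consecutive \emph{positions} in $\pi$'' with ``$\{\pi_a,\ldots,\pi_{b-1}\}$ is an interval of integers''; when $\beta_j\in S(\pi)$, the element $\beta_j+1$ does precede $\beta_j$ in $\pi$, but it may sit in positions $1,\ldots,a-1$, outside the window, and no contradiction arises.

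The fix is exactly what the paper does: keep both cases. If $\beta_j\notin S(\pi)$ the new constraint at that boundary is $x_{\beta_j}<x_{\beta_j+1}$ (a fresh strict inequality); if $\beta_j\in S(\pi)$ it is $x_{\beta_j}+1<x_{\beta_j+1}$, i.e.\ $x_{\beta_j}<x_{\beta_j+1}-1$, a strengthening of the existing strict inequality by one unit. Either way each run contributes exactly one additional unit of strictness, so after the substitution the coordinates range over $\{0,\ldots,q-|S(\pi)|-c-1\}$ and the count $\binom{k+q-c-2-|S(\pi)|}{k-1}$ follows. Your first-paragraph bookkeeping also garbles this point: the qualifier ``for each run $T_j$ whose top index $\beta_j$ already lies in $S(\pi)$'' should simply read ``for each run $T_j$'', with the case split on $\beta_j\in S(\pi)$ versus $\beta_j\notin S(\pi)$ coming afterwards.
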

\begin{proof}
Assume that $T_{a,b}=\{\pi_a, \pi_{a+1},\ldots,\pi_{b-1}\}=T_1\cup
T_2\cup\cdots\cup T_c$, where each $T_i = \{a_i,
a_i+1,\ldots,b_i\}$ is a maximal subset of consecutive integers.
 From (\ref{E:prvotjeme}), we see
that $\mathbf{xy} \in E_{a,b}$ if and only if $\mathbf{x}$
satisfies the following $c$ additional conditions:
$$ x_{b_i}<x_{b_i+1}\textrm{ if }b_i \notin S(\pi)
\textrm{,  or } x_{b_i}<x_{b_i+1}-1\textrm{ if }b_i \in S(\pi)
\textrm{, for all }i=1,\ldots,c.$$ By reasoning analogous to that
in Proposition \ref{P:edgesconsec}, we must choose $k-1$ weakly
increasing coordinates for $\mathbf{x}$ from $q-|S(\pi)|-c$ values
(one additional constraint per  $T_{i}$).
\end{proof}
Therefore, the total number of edges in $G_{k,q}^\pi$ depends on
the decomposition of consecutive entries of $\pi$, which motivates
the following definition.
\begin{defn}
For a permutation $\pi=\pi_1\pi_2\ldots\pi_{k-1}\in
\mathbb{S}_{k-1}$, let $cs_i(\pi)$ denote the number of
subsequences $T_{a,b}=\{\pi_a, \pi_{a+1},\ldots,\pi_{b-1}\}$ of
$\pi$ that decompose into exactly $i$ subsets of consecutive
integers. We define \emph{the vector of consecutiveness} of $\pi$
as
$$cs(\pi)=\left(cs_1(\pi),cs_2(\pi),\ldots,
cs_{m}(\pi)\right).$$
\end{defn}
Recall that a permutation $\pi \in \mathbb{S}_{k-1}$ is called
\emph{simple} if no set of consecutive integers having more then
one and less then $k-1$ elements is contiguous in $\pi$. Equivalently,
$\pi\in \mathbb{S}_{k-1}$ is a simple if and only if
$cs_1(\pi)=k$. For more details on simple permutations, see
\cite{Bri} or \cite{AA}.
\begin{rem} Some basic properties of the vector of consecutiveness.
\begin{enumerate}
    \item For all $\pi\in \mathbb{S}_{k-1}$,
     we have $cs(\pi)=cs(\pi^{\mathrm{op}})$, where $\pi^{\mathrm{op}}=
    \pi_{k-1}\ldots\pi_2\pi_1$.
    \item Note that $cs_i(\pi)=0$ for all
$i>\left\lceil\frac{k-1}{2}\right\rceil$, and that $\sum_{i}
cs_i(\pi)={k \choose 2}$.
    \item For all $\pi\in \mathbb{S}_{k-1}$ and
    $i\leq\left\lceil\frac{k-1}{2}\right\rceil$
    the following bounds hold \begin{equation}\label{E:boundsforcs}
ik\leq cs_1(\pi)+cs_2(\pi)+\cdots+cs_i(\pi)\leq {k\choose 2}.
\end{equation}
\end{enumerate}
We describe the permutations attaining these bounds in Section 3.
\end{rem}
\noindent Combining (\ref{E:edgesss}) and Lemma \ref{L:Eab}, we
obtain the following expression for the total number of edges in
$G_{k,q}^\pi$:
\begin{thm}
The number of edges in $G_{k,q}^\pi$ is
$$\sum_{i} cs_i(\pi){q+k-2-|S(\pi)|-i\choose k-1}$$
\end{thm}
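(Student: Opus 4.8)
The plan is to assemble the count directly from the decomposition of the edge set recorded in~(\ref{E:edgesss}) together with the cardinality formula of Lemma~\ref{L:Eab}. First I would observe that by~(\ref{E:edgesss}) the edge set of $G^\pi_{k,q}$ is the disjoint union of the sets $E_{a,b}$ over all pairs $1\leq a<b\leq k$, so that $|E(G^\pi_{k,q})|=\sum_{1\leq a<b\leq k}|E_{a,b}|$. There are exactly $\binom{k}{2}$ such pairs, which matches the fact that $\sum_i cs_i(\pi)=\binom{k}{2}$ noted in the remark on the vector of consecutiveness.

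Next I would apply Lemma~\ref{L:Eab}: for each pair $(a,b)$, if $T_{a,b}=\{\pi_a,\dots,\pi_{b-1}\}$ decomposes into exactly $c=c(a,b)$ maximal runs of consecutive integers, then $|E_{a,b}|=\binom{q+k-2-|S(\pi)|-c(a,b)}{k-1}$. Substituting this into the sum gives
$$|E(G^\pi_{k,q})|=\sum_{1\leq a<b\leq k}\binom{q+k-2-|S(\pi)|-c(a,b)}{k-1}.$$
Now I would group the terms according to the value of $c(a,b)$: the number of pairs $(a,b)$ with $c(a,b)=i$ is, by definition, precisely $cs_i(\pi)$. Collecting equal binomial coefficients therefore yields
$$|E(G^\pi_{k,q})|=\sum_i cs_i(\pi)\binom{q+k-2-|S(\pi)|-i}{k-1},$$
which is the claimed formula. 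The special cases $i=1$ (pairs giving a single run, including $(a,b)=(1,k)$ and the consecutive pairs $(i,i+1)$) recover Proposition~\ref{P:edgesconsec} as a consistency check.

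This proof is essentially a bookkeeping argument, so there is no serious obstacle; the only point requiring care is making sure that the definition of $cs_i(\pi)$ is exactly ``number of subsequences $T_{a,b}$ of $\pi$ that decompose into exactly $i$ subsets of consecutive integers,'' so that the regrouping step is literally a restatement of that definition and incurs no off-by-one error in the range of summation (the index $i$ runs from $1$ to $m=|S(\pi)|+1$, and $cs_i(\pi)=0$ outside $1\leq i\leq\lceil(k-1)/2\rceil$ anyway). One should also note that when $q-|S(\pi)|-i<k-1$ the corresponding binomial coefficient vanishes, so the formula remains valid with the usual convention $\binom{n}{k-1}=0$ for $n<k-1$, and no case distinction on $q$ is needed.
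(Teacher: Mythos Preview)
Your proposal is correct and follows exactly the route the paper indicates: the paper itself does not spell out a proof but simply states that the formula is obtained by combining the edge decomposition~(\ref{E:edgesss}) with Lemma~\ref{L:Eab}, which is precisely the bookkeeping you carry out. Your additional remarks on the range of $i$ and the vanishing binomials are accurate sanity checks that go slightly beyond what the paper records.
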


To utilize the graph $G_{k,q}^\pi$  for finding an optimal Sperner
labeling of $H_{k,q}^\pi$, we need to describe how a Sperner's
labeling of $H_{k,q}^\pi$ transfers to a labeling of
$G_{k,q}^\pi$, and vice versa. Given a Sperner-labeled hypergraph
 $H^\pi_{k,q}$, each monochromatic hyperedge $F(\mathbf{v},\pi)$
  naturally assigns its color
  to the corresponding vertex $\mathbf{v}$ of $G_{k,q}^\pi$.
 The non-monochromatic hyperedges of
 $H^\pi_{k,q}$ leave the corresponding vertices of $G_{k,q}^\pi$
 non-colored, and we say that these vertices receive \mbox{the color $0$.}

We denote by $L^\pi(\mathbf{v})$ the list of the possible labels
for $\mathbf{v}\in W_{k,q}^\pi$. Given a Sperner-labeled hypegraph
$H_{k,q}^\pi$, the color assignment described before induces a
function $f:W_{k,q}^\pi\to[k]\cup\{0\}$ that satisfies the following
conditions:
\begin{equation}\label{uslov1}
    f(\mathbf{x})\in L^\pi(\mathbf{x})\cup\{0\}
     \text{\ for every\ } \mathbf{x}\in W^\pi_{k,q-1};
\end{equation}
\begin{equation}\label{uslov2}
    \text{\ If\ } \mathbf{xy} \text{\ is an edge in\ } G^\pi_{k,q},
    \text{\ then\ } f(\mathbf{x})=f(\mathbf{y}) \text{\ or\ }
    0\in \{f(\mathbf{x}),f(\mathbf{y})\}.
\end{equation}

On the other hand, if $f:W_{k,q}^\pi\to[k]\cup\{0\}$ satisfies
the conditions (\ref{uslov1}) and (\ref{uslov2})
then there is a Sperner's labeling of $H_{k,q}^\pi$ such that
a hyperedge $F(\mathbf{v},\pi)$  is $i$-monochromatic
if and only if $f(\mathbf{v})=i$.

A mapping $f:W_{k,q}^\pi\to[k]\cup\{0\}$ that satisfies conditions
(\ref{uslov1}) and (\ref{uslov2}) will be called a (proper)
\emph{Sperner's labeling} of $G_{k,q}^\pi$. Now, we can rephrase
Problem 1 in the following way.\vspace{.2cm}

\textbf{Problem 1a.} For a given $\pi\in\mathbb S_{k-1}$,
find a Sperner's labeling of $G_{k,q}^\pi$ that minimizes
the number vertices assigned the color $0$.\\

\noindent Assume that the set of adjacent inversions for a given
permutation $\pi\in \mathbb{S}_{k-1}$ is
$S(\pi)=\{a_1,a_2,\ldots,a_{m-1}\}$. The vertex set of
$G^\pi_{k,q}$ can be identified with the integer points in a
modified simplex $R^\pi_{k,q}\subseteq R_{k,q}$. More precisely,
define
$R^\pi_{k,q}=\operatorname{conv}\left\{\mathbf{w}_\pi^{(1)},\mathbf{w}_\pi^{(2)},\ldots,
\mathbf{w}_\pi^{(k)}\right\}$, where the vertices are given as
follows:
$$\mathbf{w}_\pi^{(1)}=(\underbrace{0,\ldots,0}_{a_1},
\underbrace{1,\ldots,1}_{a_2-a_1},\ldots,
\underbrace{m-1,\ldots,m-1}_{k-1-a_{m-1}})\textrm{, }$$
$$\textrm{ and }  \mathbf{w}_\pi^{(i+1)}=\mathbf{w}_\pi^{(i)}+
(q-1-|S(\pi)|)e_{k-i}
\textrm{ for } i=1,2,\ldots,k-1.$$

\noindent Now, the vertex $\mathbf{w}_\pi^{(i)}$ of $R^\pi_{k,q}$
can be assigned only the color $k+1-i$, since
$F(\mathbf{w}_\pi^{(i)},\pi)$
 can only be $(k+1-i)$-monochromatic.
Furthermore, a vertex of $G^\pi_{k,q}$ lying in the
relative interior of a face of $R_{k,q}^\pi$ can be colored with some color
only if some vertex of that face can be colored with the same color.

For a given vertex $\mathbf{v} \in W_{k,q}^\pi$ the set of
admissible colors with respect to a Sperner's labeling is:
\begin{equation}\label{listaboja}
L^\pi(\mathbf{v})=\{i\in S(\pi):v_{i-1}<v_{i}+1\}\cup \{i\in
[k]\setminus S(\pi):v_{i-1}<v_{i}\}.
\end{equation}
This set coincides with the set of admissible labels for
$\mathbf{v}$ with respect to a Sperner's labeling of the simplex
$R_{k,q}^\pi$.

\section{Graph $G_\pi$, hypersimplices and a dihedral action}
\subsection{Motivation, definition and characterization of $G_\pi$}
For a permutation $\pi\in \mathbb{S}_{k-1}$ and $q=|S(\pi)|+2$,
the graph $G^\pi_{k,|S(\pi)|+2}$ consists of $k$ vertices,
corresponding to the vertices of $R^\pi_{k,|S(\pi)|+2}$. If we
denote the vertex $\mathbf{w}_\pi^{(i)}$ by $k+1-i\in [k]$, then
the vertex set of this graph is $[k]$. Two vertices labeled by
$i,j\in [k]$, with $i<j$, are adjacent in $G^\pi_{k,|S(\pi)|+2}$
if and only if the set $\{i,i+1,\ldots,j-1\}$ appears as sequence
of consecutive entries in $\pi$. From equation (\ref{listaboja}),
it follows that the vertex labeled by $i$ corresponds to the
hyperedge of $H^\pi_{k,|S(\pi)|+2}$, which can be monochromatic if
and only if all of its vertices are colored by $i$.

\begin{defn}
For a permutation $\pi=\pi_1\pi_2\ldots\pi_{k-1}\in
\mathbb{S}_{k-1}$, we define the graph $G_{\pi}$ on the vertex set
$[k]$. Two vertices $i<j$ are adjacent in $G_{\pi}$ if and only if
the set $\{i,i+1,\ldots,j-1\}$ appears as a contiguous subsequence
in $\pi$; that is, there exists an index $a$ such that
$\{\pi_a,\pi_{a+1},\ldots,\pi_{a+j-i-1}\}=\{i,i+1,\ldots,j-1\}$.
\end{defn}
 The graphs $G_\pi$ defined above also
appear
 in \cite{Bag} as geometric representations
 of posets associated with permutations.
  More details about these posets can be
find in \cite{Tenner}. We study these graphs here, because each
$G_{\pi}$ serves as a building block of $G^{\pi}_{k,q}$. Moreover,
understanding the structure and the properties of these graphs aid
in solving the Hypergraph Labeling Problem for hypergraphs defined
by certain permutations.

\begin{rem}
The graph $G^{\mathrm{Id}}_{k,q}$, the line graph of the
Simplex-Lattice Hypergraph $H_{k,q} = H^{\mathrm{Id}}_{k,q}$, is
also the 1-skeleton of the simplicial complex $
H^{\mathrm{Id}}_{k,q-1}$. Alternatively, it can be assembled from
edge-disjoint copies of the complete graph$K_k \cong
G_{\mathrm{Id}}$, one for each $\mathbf{w} \in W_{k,q-2}$, i.e.,
$G^{\mathrm{Id}}_{k,q} \cong \bigcup_{w \in W_{k,q-2}} (w +
G_{\mathrm{Id}})$.

For any $\pi \in \mathbb{S}_{k-1}$, the graph $G^\pi_{k,q}$
similarly contains all copies $ w + G_\pi$, for $w \in W_{k,q-2}
$, covering all vertices: $ G^\pi_{k,q} \supset \bigcup_{w \in
W_{k,q-2}} (w + G_\pi)$. \\If $ \pi \notin \{ \mathrm{Id},
\mathrm{Id}^{\mathrm{op}} \} $, extra edges of the form $ E_{a,b}$
(see~(\ref{E:edgesss})) may also appear.
\end{rem}

We collect some basic facts about graphs $G_\pi$ that follow
immediately from the definition.
\begin{prop}\label{P:propertiesofG}
Let $\pi=\pi_1\pi_2\ldots\pi_{k-1}\in \mathbb{S}_{k-1}$. Then:
\begin{enumerate}
    \item[(i)] $G_{\mathrm{Id}}=K_k$. If we let
    $\pi^{\mathrm{op}}=\pi_{k-1}\pi_{k-2}\ldots\pi_{1}$,
    then $G_\pi=G_{\pi^{\mathrm{op}}}$.
    \item[(ii)]
    $G_\pi$ always contains a $k$-cycle
    as a subgraph. Moreover,
    $G_{\pi}=C_k$ if and only if $\pi$ is a simple
    permutation.  \item[(iii)] The number of edges in $G_\pi$ is
    $cs_1(\pi)$.
    \item[(iv)]\label{F:nonintersecting} If $1\leq a<b<c<d\leq k$
    and both $ac$ and $bd$ are edges of $G_\pi$,
    then the induced subgraph on $\{a,b,c,d\}$ is a $K_4$.
\end{enumerate}
\end{prop}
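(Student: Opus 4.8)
The plan is to verify each of the four items directly from the definition of $G_\pi$, since all of them translate into elementary statements about which sets of consecutive integers occur as contiguous blocks of $\pi$.

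For (i), the identity permutation $\mathrm{Id}=12\ldots(k-1)$ has \emph{every} subset $\{i,i+1,\ldots,j-1\}$ of consecutive integers appearing as a contiguous subsequence, so all pairs $i<j$ in $[k]$ are edges, giving $G_{\mathrm{Id}}=K_k$. For the second claim, reversing $\pi$ to $\pi^{\mathrm{op}}$ does not change which \emph{sets} of entries form contiguous windows — a window $\pi_a\ldots\pi_{a+\ell}$ becomes the reversed window in the same positions counted from the other end — so the adjacency relation is unchanged and $G_\pi=G_{\pi^{\mathrm{op}}}$.

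For (ii), I would first show that for each $i\in[k-1]$ the singleton $\{i\}$ is trivially a contiguous subsequence of $\pi$ (a single entry), so $i(i+1)$ is an edge of $G_\pi$ for all $i\in[k-1]$; together with the edge $1\,k$, which corresponds to the full set $\{1,\ldots,k-1\}$ being the whole word $\pi$, these $k$ edges form the $k$-cycle $1-2-\cdots-k-1$. For the characterization $G_\pi=C_k$, I note that $G_\pi$ equals this cycle precisely when it has no other edges, i.e.\ no set $\{i,\ldots,j-1\}$ with $2\le j-i\le k-2$ is contiguous in $\pi$; by Proposition~\ref{P:propertiesofG}(iii) (or directly) this is exactly the definition of a simple permutation, so $G_\pi=C_k\iff\pi$ simple. (Here I am using the convention from the excerpt that $\mathrm{Id}$ itself, and $\mathrm{Id}^{\mathrm{op}}$, count as simple; for $k\le 4$ one checks the small cases by hand.)

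For (iii), the edges of $G_\pi$ are in bijection with the pairs $i<j$ such that $\{i,\ldots,j-1\}$ is contiguous in $\pi$, i.e.\ with the windows $T_{a,b}$ of $\pi$ that consist of a single block of consecutive integers ($c=1$ in the terminology preceding Lemma~\ref{L:Eab}); by definition this count is $cs_1(\pi)$. For (iv), suppose $a<b<c<d$ with $ac,bd\in E(G_\pi)$, so $I=\{a,\ldots,c-1\}$ and $J=\{b,\ldots,d-1\}$ are both contiguous in $\pi$. The key observation is that $I\cap J=\{b,\ldots,c-1\}\ne\emptyset$ and neither of $I,J$ contains the other (since $a<b$ and $c<d$), so the two windows overlap in $\pi$, and because a window is determined by a set of positions, overlapping windows must in fact be \emph{nested as positions}, forcing their union $I\cup J=\{a,\ldots,d-1\}$ to also be a contiguous subsequence; moreover every ``sub-window'' of these — in particular $\{a,\ldots,b-1\}$, $\{b,\ldots,c-1\}$, $\{c,\ldots,d-1\}$, $\{a,\ldots,c-1\}$, $\{b,\ldots,d-1\}$, $\{a,\ldots,d-1\}$ — is contiguous, which says exactly that all six pairs among $\{a,b,c,d\}$ are edges, i.e.\ the induced subgraph is $K_4$. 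The one point that needs care, and which I expect to be the main (though still modest) obstacle, is the claim that two overlapping windows of a permutation must be position-nested: this uses that the multiset of values in a window of length $\ell$ is a set of $\ell$ consecutive integers, so if positions $[a',a'+\ell-1]$ and $[b',b'+\ell'-1]$ overlap without nesting then their symmetric difference of positions would have to supply integers both above and below the common part in a way incompatible with both being intervals — a short interval-arithmetic argument that I would spell out carefully.
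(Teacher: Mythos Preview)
The paper does not actually prove this proposition; it is introduced as a list of facts that ``follow immediately from the definition.'' Your arguments for (i)--(iii) are correct and match what the paper evidently has in mind.

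Your argument for (iv), however, contains a genuine error. You claim that because the value-intervals $I=\{a,\ldots,c-1\}$ and $J=\{b,\ldots,d-1\}$ overlap, the corresponding position-windows $P_I,P_J\subset[k-1]$ must be \emph{nested}, and you single this out as the step needing care. In fact the opposite holds: $P_I$ and $P_J$ can \emph{never} be nested here, since $P_I\subseteq P_J$ would force $I\subseteq J$ (the values on $P_I$ are exactly $I$), contradicting $a<b$. What you actually need is the observation that $P_I$ and $P_J$ \emph{intersect} (any $v\in I\cap J$ has its position in both) and that neither contains the other. Two integer intervals with this property have the feature that their union, intersection, and both set differences are again intervals. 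Translating back to values, the sets occupying $P_I\cup P_J$, $P_I\cap P_J$, $P_I\setminus P_J$, $P_J\setminus P_I$ are precisely $\{a,\ldots,d-1\}$, $\{b,\ldots,c-1\}$, $\{a,\ldots,b-1\}$, $\{c,\ldots,d-1\}$, each therefore contiguous in $\pi$; together with the two hypotheses this gives all six edges on $\{a,b,c,d\}$.

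Your fallback claim that ``every sub-window'' of a contiguous block is again contiguous is false in general (in $\pi=2413$ the whole word is a value-interval, but $\{2,3\}$ sits at positions $1$ and $4$), so that shortcut cannot replace the position-interval analysis above. The ``interval-arithmetic argument'' you anticipate is indeed short, but it proves non-nesting rather than nesting, and that is what drives the conclusion.
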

If we identify the vertices of $G_\pi$ with those of a convex
$k$-gon $P_k = A_1A_2\ldots A_k$, where each $i \in [k]$
corresponds to $A_i$, then the edges of $G_\pi$ fall into three
categories:
\begin{itemize}
    \item \textbf{Polygon edges.} All edges of
    $P_k$ (i.e., the boundary edges) are always present in $G_\pi$,
    for any $\pi \in \mathbb{S}_{k-1}$.
    \item \textbf{Non-crossing diagonals} of $P_k$.
     Certain diagonals of $P_k$
    that do not intersect
    in the interior of $P_k$ may also be edges of $G_\pi$.
    \item  \textbf{Crossing diagonals} of $P_k$.
    For some permutations $\pi\in \mathbb{S}_{k-1}$, the graph
     $G_\pi$ may include edges corresponding to diagonals of $P_k$ that
     intersect in
    the interior of $P_k$.
\end{itemize}
\begin{lem}The non-crossing diagonals
    appear in all graphs $G_\pi$, except when $G_\pi=K_k$ or $G_\pi= C_k$.
\end{lem}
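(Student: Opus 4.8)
The statement asserts a dichotomy: if $G_\pi$ contains any diagonal of $P_k$ at all, then it contains a non-crossing one — equivalently, the only way for $G_\pi$ to consist solely of polygon edges is $G_\pi = C_k$, and the only way for it to contain only crossing diagonals (and no non-crossing ones) is impossible unless $G_\pi = K_k$. So the plan is to assume $G_\pi \ne C_k$ and $G_\pi \ne K_k$ and produce a non-crossing diagonal. By Proposition~\ref{P:propertiesofG}(ii), $G_\pi \ne C_k$ means $\pi$ is not simple, so some set of consecutive integers $T = \{i, i+1, \dots, j-1\}$ with $2 \le j - i \le k-2$ is contiguous in $\pi$; this is exactly the statement that the pair $\{i,j\}$ is a diagonal edge of $G_\pi$ (it is a diagonal, not a polygon edge, precisely because $j - i \ge 2$, and it is not the "full" diagonal because $j - i \le k-2$). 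So $G_\pi$ has at least one diagonal; the work is to extract a non-crossing one.

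First I would choose, among all diagonal edges $\{i,j\}$ of $G_\pi$ (with $i < j$), one that is \emph{shortest}, i.e., minimizes $j - i$ subject to $j - i \ge 2$. Call it $\{a,b\}$, corresponding to the contiguous block $T_{a,b} = \{a, a+1, \dots, b-1\}$ in $\pi$. I claim $\{a,b\}$ is non-crossing in $P_k$. Suppose some edge $\{c,d\}$ of $G_\pi$ crosses it, i.e., $c < a < d < b$ or $a < c < b < d$ (strict interlacing). Apply Proposition~\ref{P:propertiesofG}(iv): the two crossing chords $\{a,b\}$ and $\{c,d\}$ force the induced subgraph on the four vertices to be a $K_4$; in particular all of $\{a,c\}, \{c,b\}, \{a,d\}, \{d,b\}$ are edges of $G_\pi$. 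But in the interlacing pattern at least one of these four pairs is strictly shorter than $\{a,b\}$ while still having difference $\ge 2$ — for instance if $c < a < d < b$ then $\{d, b\}$ has length $b - d < b - a$, and $\{a, d\}$ has length $d - a < b - a$, and at least one of $b - d$, $d - a$ is $\ge 2$ since they sum to $b - a \ge $ something; one needs to check the small cases where both could be $1$, but then $b - a = 2$ and the chord $\{a,b\}$ itself is non-crossing with the boundary structure — I would handle $b-a=2$ separately as the base case (a "length-2" diagonal cuts off a single ear $A_{a}A_{a+1}A_{a+2}$ and cannot be crossed by any chord at all). For $b - a \ge 3$ the interlacing always yields a diagonal edge strictly shorter than $\{a,b\}$, contradicting minimality. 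The same argument runs symmetrically for the pattern $a < c < b < d$.

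The one point requiring care — and the likely main obstacle — is the bookkeeping on which of the four $K_4$-edges is guaranteed to be a genuine \emph{diagonal} (difference $\ge 2$) and strictly shorter; the naive argument can degenerate when the crossing chord meets $\{a,b\}$ very close to an endpoint. The clean fix is: in the pattern $c < a < d < b$, consider the edge $\{d, b\}$. Its length is $b - d$. Since $a < d$, we have $b - d < b - a$, so it is strictly shorter. If $b - d \ge 2$, it is a diagonal, contradiction. If $b - d = 1$, then $d = b - 1$, so the edge is $\{c, b-1\}$ with $c < a \le b - 2$, hence $\{c, b-1\}$ has length $(b-1) - c \ge (b - 1) - (a-1) = b - a \ge 2$ — wait, this is \emph{not} shorter. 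So instead, when $b - d = 1$, I switch attention to $\{a, d\} = \{a, b-1\}$, of length $b - 1 - a = (b-a) - 1 \ge 1$; if $b - a \ge 3$ this has length $\ge 2$ and is strictly shorter, done; and $b - a = 2$ was already the base case. Assembling these micro-cases into one paragraph, with the base case $b - a = 2$ dispatched first, completes the proof. I would also remark that the minimal diagonal $\{a,b\}$ being non-crossing gives slightly more — it is "ear-like" relative to the chords present — but only the bare non-crossing conclusion is needed here.
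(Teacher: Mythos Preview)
Your extremal approach---pick a diagonal $\{a,b\}$ of minimal length $b-a$ and argue it must be non-crossing---is genuinely different from the paper's, which instead takes a \emph{maximal clique} $K_p$ in $G_\pi$ containing at least one diagonal and shows that an edge of $K_p$ between two cyclically consecutive clique vertices is non-crossing (any crossing chord would, via property~(iv), adjoin a new vertex to $K_p$, contradicting maximality). Your reduction in the case $b-a\ge 3$ is fine: the $K_4$ forced by property~(iv) always yields a strictly shorter diagonal.

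The gap is in your base case $b-a=2$. The assertion that a length-$2$ diagonal $\{a,a+2\}$ ``cannot be crossed by any chord at all'' is false: it is crossed by every chord $\{a+1,d\}$ with $d\notin\{a,a+1,a+2\}$, and such chords can lie in $G_\pi$. Concretely, take $k=5$ and $\pi=3214\in\mathbb{S}_4$. Here $\{1,2\}$, $\{2,3\}$ and $\{1,2,3\}$ are contiguous in $\pi$ while $\{3,4\}$ and $\{2,3,4\}$ are not, so the diagonals of $G_\pi$ are exactly $\{1,3\}$, $\{2,4\}$, $\{1,4\}$. The two shortest diagonals $\{1,3\}$ and $\{2,4\}$ cross one another, and the $K_4$ on $\{1,2,3,4\}$ produces no diagonal of length~$<2$; your minimality argument stalls. (The lemma still holds here because $\{1,4\}$ is non-crossing---but your method does not find it.)

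The paper's maximal-clique choice avoids this: instead of shrinking a diagonal, one \emph{grows} a clique, and growth is bounded above by $k$, with the extremal case $p=k$ being exactly $G_\pi=K_k$. If you want to rescue your framework, one route is to observe that when the minimal diagonal has length $2$ and is crossed, the resulting $K_4$ sits on four consecutive vertices $a,a+1,a+2,a+3$; you can then pass to the maximal interval of consecutive vertices inducing a clique and take its outer edge---but at that point you have essentially reinvented the paper's argument.
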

\begin{proof}Let $K_p = A_{i_1}A_{i_2}\ldots A_{i_p}$ be
a maximal clique in $G_\pi$ that includes at
least one diagonal of the polygon $P_k$.
 Without loss of generality, assume that the edge
 $A_{i_1}A_{i_2}$ is a such diagonal. We claim that
 $A_{i_1}A_{i_2}$ is a non-crossing diagonal.
Suppose, for contradiction, that it is crossed by another diagonal
$XY$ of $P_k$, and that $X$ lies between $A_{i_1}$ and $A_{i_2}$
in the cyclic order of the polygon. The edge $XY$ crosses the edge
$A_{i_1}M$ for every vertex $M$ of $K_p$ between $A_{i_2}$ and $Y$
and the edge $A_{i_2}N$ for every vertex $N$ of $K_p$ between $Y$
and $A_{i_1}$. By property (iv) of Proposition
\ref{P:propertiesofG}, it follows that $XA_{i_s} \in E(G_\pi)$ for
every $s = 1, 2, \ldots, p$, so $G_\pi$ has a $(p+1)$-clique. This
contradicts the maximality of $K_p$, hence, the diagonal
$A_{i_1}A_{i_2}$ must be non-crossing.
\end{proof}

 \noindent Using the fact that $\mathbb{S}_3$ contains
 no simple permutations, along with the previous
 lemma and property (iv) of Proposition \ref{P:propertiesofG},
 we obtain a characterization of the graphs in the family
 $\mathcal{G}_k = \{G_{\pi} : \pi \in \mathbb{S}_{k-1}\}$.
  A similar result appears in \cite[Theorem 3.10]{Bag}.

\begin{thm}For any $\pi \in \mathbb{S}_{k-1}$,
the graph $G_\pi$ defines a dissection of a convex $k$-gon into
smaller polygons using non-crossing
 diagonals. Each resulting $p$-gon in the dissection
 corresponds to a subgraph of $G_\pi$ that is either
 a $p$-cycle (for $p \ne 4$) or a complete graph on
 its vertices. Moreover, every
graph formed in this way arises as $G_\pi$
for some $\pi \in \mathbb{S}_{k-1}$.
\end{thm}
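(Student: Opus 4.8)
The plan is to prove the two directions of the characterization separately. For the forward direction, I would start from the preceding lemma, which already tells us that whenever $G_\pi$ is neither $K_k$ nor $C_k$, every diagonal appearing in $G_\pi$ is non-crossing. Combined with property (iv) of Proposition~\ref{P:propertiesofG} (which forbids the ``two crossing edges without the full $K_4$'' configuration), this means the set of diagonals of $P_k$ that are edges of $G_\pi$ is a pairwise non-crossing family, hence defines a genuine polygonal dissection of $P_k$ into sub-polygons. I would then fix one cell $Q = A_{i_1}A_{i_2}\ldots A_{i_p}$ of this dissection and analyze the induced subgraph $G_\pi[\{i_1,\ldots,i_p\}]$. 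The boundary edges of $Q$ are all present: each is either a polygon edge of $P_k$ (always in $G_\pi$) or one of the chosen diagonals. So the induced subgraph contains the $p$-cycle on $Q$; the question is which chords of $Q$ also lie in $G_\pi$. Here I would invoke property~(iv) again \emph{inside} $Q$: any chord of $Q$ that is present must be non-crossing with the others by the lemma, but a chord of $Q$ would subdivide $Q$ further, contradicting that $Q$ is a cell of the maximal dissection --- \emph{unless} $p = 4$, where the two diagonals of a quadrilateral cross, and then property~(iv) forces both to be present simultaneously, turning the cell into a $K_4$. For $p \neq 4$ no chord can appear, so the cell is exactly a $p$-cycle. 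The cases $G_\pi = K_k$ ($\pi = \mathrm{Id}$) and $G_\pi = C_k$ ($\pi$ simple, by Proposition~\ref{P:propertiesofG}(ii)) are the degenerate dissections (a single $k$-gon that is either complete or a cycle), so they fit the statement with $k=3$ handled by the remark that $\mathbb{S}_3$ has no simple permutations.

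For the converse --- that every such dissection-graph is realized --- I would argue by induction on $k$, or equivalently by induction on the number of cells. Given a dissection of $P_k$ with cells each decorated as a cycle or (if a quadrilateral) possibly a $K_4$, pick a cell $Q$ incident to at least one polygon edge of $P_k$, say $Q$ contains the boundary edge $A_jA_{j+1}$; such a cell always exists (an ``ear'' of the dissection, or the whole polygon if there are no diagonals). The idea is to build $\pi$ by placing the block of integers corresponding to $Q$ as a contiguous segment and recursing on the smaller polygon obtained by collapsing $Q$. Concretely, one uses the correspondence from Section~2: an edge $ij$ of $G_\pi$ (with $i<j$) records that $\{i,i+1,\ldots,j-1\}$ is contiguous in $\pi$, so a cell on vertices $\{i_1 < i_2 < \cdots < i_p\}$ prescribes which runs of consecutive integers must be blocks. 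For a cycle cell one orders these runs inside $\pi$ so that no \emph{extra} set of consecutive integers becomes contiguous (this is exactly the ``simple permutation on the quotient'' requirement, and such simple permutations exist in every $\mathbb{S}_{n}$ with $n \geq 4$), and for a $K_4$ cell one uses the order $3\,1\,2$ or $2\,3\,1$ on the three runs, which is precisely the non-simple pattern in $\mathbb{S}_3$ that makes all three diagonals contiguous. Substituting these local patterns into one another (the standard inflation/substitution operation on permutations) yields a $\pi \in \mathbb{S}_{k-1}$ whose $G_\pi$ is the prescribed graph; one then checks that no unwanted contiguous block appears across cell boundaries, which follows because the runs dictated by one cell, together with those of an adjacent cell, only union up to a consecutive set when they share the gluing vertex --- and that union is an edge of $G_\pi$ that the dissection already records.

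The main obstacle I expect is the converse, specifically the bookkeeping that guarantees $G_\pi$ is \emph{exactly} the target graph and not larger: one must verify that substituting simple permutations (and the $\mathbb{S}_3$ pattern for quadrilaterals) into the cell structure introduces no spurious interval that would create an unwanted edge. This is essentially the statement that interval (block) structure behaves well under substitution --- the intervals of an inflation $\sigma[\tau_1,\ldots,\tau_n]$ are generated by the intervals of $\sigma$ and those of the $\tau_i$ --- which is classical in the theory of simple permutations (\cite{AA}), but making it precise in the polygon/graph language requires care. The forward direction, by contrast, is essentially immediate once one has the preceding lemma and property~(iv) in hand; the only subtlety there is the special role of $p=4$, which the statement already flags.
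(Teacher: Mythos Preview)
Your forward argument rests on a misreading of the preceding lemma. The lemma asserts that a non-crossing diagonal \emph{exists} in $G_\pi$ whenever $G_\pi\neq K_k,C_k$; it does \emph{not} say that every diagonal of $G_\pi$ is non-crossing. In fact crossing diagonals occur whenever a region is a complete graph on four or more vertices: for instance, for $\pi=12354\in\mathbb{S}_5$ the only non-crossing diagonal of $G_\pi$ is $46$, and the pentagonal cell $12346$ carries the full $K_5$, so $G_\pi$ contains the crossing pair $13,24$ (and several others). Consequently your claim that ``the set of diagonals of $P_k$ that are edges of $G_\pi$ is a pairwise non-crossing family'' is false, and the dissection you build from \emph{all} diagonals does not exist. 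The internal tension you already sense---you first assert all diagonals are non-crossing, then allow a $K_4$ cell with two crossing diagonals---is a symptom of this.

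The same misreading makes you miss a whole class of cells. You conclude that for $p\neq 4$ a cell is exactly a $p$-cycle, but the theorem explicitly allows complete $K_p$ cells for every $p$, not only $p=4$; the parenthetical ``(for $p\neq 4$)'' restricts the \emph{cycle} option, not the \emph{complete} option. Your converse inherits the gap: you only construct $\pi$ for cycle cells and $K_4$ cells (via simple permutations and $312$/$231$ respectively), so you never realise a dissection containing, say, a $K_5$ cell. The paper dissects $P_k$ using only the non-crossing diagonals of $G_\pi$, then shows each resulting cell is governed by a quotient permutation $\sigma\in\mathbb{S}_{p-1}$ (obtained by collapsing the consecutive blocks $X_1,\ldots,X_{p-1}$ to single letters); since the cell has no interior non-crossing diagonal, the lemma applied to $\sigma$ forces $G_\sigma\in\{K_p,C_p\}$. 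For the converse the paper glues along a single non-crossing diagonal and recurses---essentially your inflation idea, but with the identity permutation (not just $312$) available for a $K_p$ cell of arbitrary size. Fixing your argument amounts to using the correct dissection (non-crossing diagonals only), recognising $K_p$ cells for all $p$, and adding the identity/increasing pattern to your inflation toolkit.
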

\begin{proof} We will use the induction on $k$.
The cases $k=3,4,5$ can be verified directly. Fix $\pi\in
\mathbb{S}_{k-1}$ and identify the vertices of $G_\pi$ with with
those of a convex $k$-gon $P_k=A_1A_2\ldots A_k$. First draw all
edges of $P_k$, followed by all edges that correspond to
non-crossing diagonals. These diagonals dissect $P_k$ into smaller
convex polygons. Consider a $p$-gon $A_{i_1}A_{i_2}\ldots A_{i_p}$
in this dissection, with $1\leq i_1<i_2<\cdots<i_p\leq k$. Its
boundary edges
$A_{i_1}A_{i_2},A_{i_2}A_{i_3},\ldots,A_{i_{p-1}}A_{i_p} $
 (either edges of $P_k$ or non-crossing
diagonals) correspond to substrings of $\pi$ that contain
consecutive sets of integers:
$$X_1=\{i_1,\ldots,i_2-1\},X_2=\{i_2,\ldots,i_3-1\},
\ldots,X_{p-1}=\{i_{p-1},\ldots,i_p-1\}.$$ Since $A_{i_1}A_{i_p}$
is also an edge of $G_\pi$, the union $X = \{i_1, \ldots, i_p -
1\}$ appears as a contiguous subsequence in $\pi$. Each $X_j$ thus
appears as a set of consecutive entries within this substring.
Replacing each $X_j$ by its index $j$ defines a new permutation
$\sigma \in \mathbb{S}_{p-1}$ that encodes the order of these
sets. Then $G_\sigma$ is a subgraph of $G_\pi$, and by the
inductive hypothesis, we have $G_\sigma \cong K_p$ or $G_\sigma
\cong C_p$. If $p = 4$, then all diagonals of the quadrilateral
are present in $G_\pi$, because $\mathbb{S}_3$ contains no simple
permutations.

To prove the converse, suppose $G$ is a graph obtained by
subdividing $P_k$ and then turning each region into a clique or
leaving it as a cycle. We show that $G = G_\pi$ for some $\pi \in
\mathbb{S}_{k-1}$, again by induction on $k$. Note that $G = K_k$
is realized by $\pi = \mathrm{Id}$, and $G = C_k$ corresponds to
any simple permutation $\pi$ (when $k \neq 4$). Suppose $G$
contains at least one non-crossing diagonal $ij$ with $1 \leq i <
j \leq k$. Then $G$ can be decomposed into two graphs $G_1$ and
$G_2$ joined along the edge $ij$. By the inductive hypothesis,
$G_1 \cong G_\alpha$ and $G_2 \cong G_\beta$ for some permutations
$\alpha \in \mathbb{S}_{k-j+i}$ and $\beta \in \mathbb{S}_{j-i}$.

To construct $\pi \in \mathbb{S}_{k-1}$ such that $G_\pi = G$,
proceed as follows:
\begin{itemize}
    \item[(1)] Keep entries of $\alpha$ less than $i$ unchanged.
    \item[(2)] Increase each entry of $\alpha$ greater than $i$ by $j - i - 1$.
    \item[(3)] Increase all entries of $\beta$ by $i - 1$, and replace
    the entry $i$ in $\alpha$ by the entire permutation
    $\beta$ or $\beta^{\mathrm{op}}$.
\end{itemize}

\begin{rem}\label{R:kada mozes lijepiti samo jednu}
In step (3), if both regions adjacent to $ij$ in $G_1$ and $G_2$
are cliques, then only one of $\beta$ or $\beta^{\mathrm{op}}$
results in a valid $\pi$ with $G_\pi = G$. The other introduces an
edge of $G_\pi$ crossing $ij$.
\end{rem}
This completes the proof.
\end{proof}

 Let $G\in \mathcal{G}_k$ be a graph that subdivides
 the convex polygon $P_k$
   into smaller regions
   using non-crossing diagonals. Each region is either a
   \emph{complete}
    (with all internal diagonals present) or a \emph{cycle} (with none).
    Two complete regions $R$ and $R'$ \emph{connected} if there
exists a sequence $R=R_1,R_2,\ldots,R_p=R'$ of complete regions,
each sharing an edge with the next.

The number of graphs in $\mathcal{G}_k$ is the number of all
interval posets, see Theorem 22 in \cite{Bouvel}. For a fixed
graph $G \in \mathcal{G}_k$, we want to determine the number of
permutations $\pi \in \mathbb{S}_{k-1}$ such that $G_\pi \cong G$.
Recall that for $G=C_k$, the number of such permutation is $s_k$,
the number of simple permutations in $\mathbb{S}_{k-1}$, see
\cite{Bri}. For $1\leq k\leq 9$, we have that $s_k=1, 2, 0, 2, 6,
46, 338, 2926, 28146$ (OEIS A111111~\cite{Sloan}).

\begin{thm}\label{T:BBB} Let $G\in \mathcal{G}_k$
be a graph that contains $c_i$ polygons that are cycles of length
$i>4$, and $t$ connected components consisting of entirely
complete regions (graphs). Then:
$$\Big|\{\pi\in
    \mathbb{S}_{k-1}:G_\pi=G\}\Big|=2^t s_5^{c_5}s_6^{c_6}
    \cdots s_m^{c_m}.$$
\end{thm}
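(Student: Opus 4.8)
The plan is to build a bijection between the set $\{\pi\in\mathbb{S}_{k-1}:G_\pi=G\}$ and a product of choices, one factor for each ``building block'' of the dissection, and then count each factor separately. The key structural input is the previous theorem: $G$ decomposes $P_k$ along non-crossing diagonals into regions, each of which is either a cycle $C_p$ (with $p=3$ or $p>4$) or a complete region $K_p$, and every $4$-gon region is automatically a $K_4$. The natural data determining $\pi$ is: (a) for each region that is a cycle $C_p$ with $p>4$, a choice of simple permutation in $\mathbb{S}_{p-1}$ realizing that cycle (there are $s_p$ of these, and triangles contribute $s_3=0$ as an exponent but appear with multiplicity that is irrelevant since $C_3=K_3$ and a triangle is realized in exactly one way); (b) for each connected component of complete regions, a single binary ``orientation'' choice, accounting for the factor $2^t$.

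First I would set up the recursion exactly as in the proof of the preceding theorem: pick a non-crossing diagonal $ij$ of $G$, split $G$ into $G_1$ and $G_2$ glued along $ij$, and observe that any $\pi$ with $G_\pi=G$ restricts (after the relabeling described in steps (1)--(3) of that proof, run backwards) to permutations $\alpha,\beta$ with $G_\alpha=G_1$, $G_\beta=G_2$. The subtlety, already flagged in Remark~\ref{R:kada mozes lijepiti samo jednu}, is the gluing multiplicity: given $\alpha$ and $\beta$, the number of valid ways to reassemble them into $\pi$ with $G_\pi=G$ is $2$ if the regions of $G_1$ and $G_2$ incident to $ij$ are \emph{not} both complete, and $1$ if they are both complete (because then only one of $\beta,\beta^{\mathrm{op}}$ avoids creating a diagonal crossing $ij$; note $\beta$ and $\beta^{\mathrm{op}}$ themselves may coincide, e.g. when the block is a single element, and one must check the count still comes out right in that degenerate case). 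So the count satisfies $N(G) = \mu(ij)\cdot N(G_1)\cdot N(G_2)$ where $\mu(ij)\in\{1,2\}$ records whether the two regions flanking $ij$ are both complete.

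Next I would unwind this recursion over a full triangulation-style decomposition of $G$ into its atomic regions. Choosing the diagonals greedily so that each split peels off one region, the total count becomes $N(G)=\big(\prod_{\text{internal diagonals }e}\mu(e)\big)\cdot\prod_{\text{regions }R}N(R)$, where $N(R)=1$ if $R$ is complete (including every $4$-gon and every triangle) and $N(R)=s_p$ if $R=C_p$ with $p>4$; cycles of length $3$ contribute $N(C_3)=1$ and do not affect the $s_5^{c_5}\cdots s_m^{c_m}$ factor. This immediately yields the $s_5^{c_5}s_6^{c_6}\cdots s_m^{c_m}$ part, and it remains only to show $\prod_e \mu(e)=2^t$. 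An internal diagonal $e$ has $\mu(e)=2$ iff at least one of its two flanking regions is a cycle, and $\mu(e)=1$ iff both flanking regions are complete. So $\prod_e\mu(e)=2^{\#\{e:\text{not both sides complete}\}}$. The claim $\#\{e:\text{not both sides complete}\}=t$ is the combinatorial heart: contract every maximal connected block of complete regions to a point; what remains is a tree (the dual tree of the dissection is a tree, and contracting connected subgraphs of a tree keeps it a tree) whose edges are exactly the diagonals with $\mu(e)=2$, and whose vertices are the $t$ complete-components together with the cycle-regions.

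The main obstacle is this last bookkeeping identity $\prod_e\mu(e)=2^t$, and more precisely making sure the definition of $t$ in the statement (``connected components consisting entirely of complete regions'') lines up with the contraction argument: one must verify that after contracting each such component the dual tree has exactly $t + \#\{\text{non-complete regions}\}$ vertices, hence $t + \#\{\text{non-complete regions}\} - 1$ edges, and that the $\mu(e)=1$ diagonals are precisely the ones internal to a complete-component. The cleanest way is to argue directly that every diagonal lies between two regions, classify it by how many of those are complete, and show the ``both complete'' diagonals form exactly a spanning forest of the $t$ complete-components (in the dual graph restricted to complete regions), so there are $(\text{total complete regions})-t$ of them; subtracting from the total number of diagonals $(\text{total regions})-1$ leaves $(\text{non-complete regions})+t-1$ diagonals with $\mu=2$ \emph{plus} we must recheck: actually the exponent of $2$ should be $t$, not $(\text{non-complete regions})+t-1$, so the correct statement is subtler — the exponent counts diagonals $e$ such that the two sides lie in \emph{different} complete-or-cycle-blocks under a finer equivalence, and I would need to identify that equivalence carefully. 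I expect reconciling this exponent with the stated $2^t$ to require the observation that a cycle region has at most... in fact a cycle $C_p$ has $p$ sides but its internal structure forces the gluing count, and the right invariant is: collapse the dissection's dual tree by merging adjacent complete regions \emph{and} treating each cycle region as its own node; the number of $\mu=2$ edges then equals the number of \emph{pairs} of adjacent nodes where not both are complete, which for a tree on $(t+\sum c_i + \#\text{triangles})$ nodes is that minus one — so the exponent identity must actually be read off the structure more carefully than a one-line contraction, and pinning this down is where the real work lies. I would resolve it by a clean induction on the number of regions, peeling one leaf region of the dual tree at a time and checking the exponent of $2$ and the value of $t$ both change consistently (removing a leaf complete region adjacent to a complete region: $t$ unchanged, no new $\mu=2$ edge removed — consistent; removing a leaf complete region adjacent to a cycle region, or a leaf cycle region: both $t$ and the exponent drop by the same amount). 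This induction, carried out alongside the region-product formula, completes the proof.
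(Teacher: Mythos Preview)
Your overall plan---decompose along a non-crossing diagonal, get a multiplicative recursion, unwind it over all regions, and read off the exponent of $2$ from the dual tree---is exactly the paper's approach. The problem is that your recursion is miscounted, and this is why the exponent identity you try to prove at the end is false and you end up in a muddle.

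The error is in the gluing multiplicity. The decomposition map $\pi\mapsto(\alpha,\beta)$ (where $\beta$ is the actual substring of $\pi$ on $\{i,\dots,j-1\}$, relabeled) is a \emph{bijection} from $\{\pi:G_\pi=G\}$ onto its image inside $\{(\alpha,\beta):G_\alpha=G_1,\ G_\beta=G_2\}$. There is no extra ``insert $\beta$ or $\beta^{\mathrm{op}}$'' choice once $(\alpha,\beta)$ is fixed: inserting $\beta^{\mathrm{op}}$ simply produces the $\pi'$ that decomposes as $(\alpha,\beta^{\mathrm{op}})$, and $\beta^{\mathrm{op}}$ is already a separate element of $\{\beta:G_\beta=G_2\}$. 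So the correct recursion is
\[
N(G)=\nu(ij)\,N(G_1)\,N(G_2),\qquad
\nu(ij)=\begin{cases}\tfrac12 & \text{both flanking regions complete},\\[2pt] 1 & \text{otherwise},\end{cases}
\]
since in the both-complete case exactly one of each pair $\{\beta,\beta^{\mathrm{op}}\}$ yields $G_\pi=G$ (Remark~\ref{R:kada mozes lijepiti samo jednu}), while in the other case every pair $(\alpha,\beta)$ works. Your $\mu\in\{1,2\}$ is a factor of $2$ too large. Relatedly, $N(K_p)=2$, not $1$: the two permutations are $\mathrm{Id}$ and $\mathrm{Id}^{\mathrm{op}}$. (A quick sanity check: for two triangles glued along a diagonal, $N(G_1)=N(G_2)=2$ and $N(G)=2$, so the factor must be $\tfrac12$, not $1$.)

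With the corrected recursion the endgame is immediate. Unwinding gives
\[
N(G)=\Big(\prod_{R\ \text{complete}}2\Big)\Big(\prod_{R=C_p,\ p>4}s_p\Big)\Big(\prod_{e}\nu(e)\Big)=2^{\,r_K-d_K}\prod_{p>4}s_p^{c_p},
\]
where $r_K$ is the number of complete regions and $d_K$ the number of diagonals with both sides complete. In the dual tree, the complete regions span a forest with $r_K$ vertices and $d_K$ edges, hence $r_K-d_K$ components; this is exactly $t$. So the exponent is $t$ on the nose---no delicate edge-counting needed, and the contradiction you ran into (e.g.\ three cycle regions giving exponent $2$ instead of $0$) disappears.
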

\begin{proof}We proceed by induction on the number of regions in $G$.
If $G$ consists only of cycles (i.e., no complete regions), we
construct $G$ by successively gluing cycles along non-crossing
diagonals. In this case, the count follows directly from
Remark~\ref{R:kada mozes lijepiti samo jednu}, since each cycle of
length $i>4$ contributes $s_i$.

 If $G$ contains complete graph regions, we first glue together all
 such regions within each connected component.
 By the inductive hypothesis, each complete region contributes exactly two permutations. Remark~\ref{R:kada mozes lijepiti samo jednu}
 then implies that the entire component admits exactly two permutations.
Multiplying over all $t$ connected components of complete graphs
yields the factor $2^t$.

Combining these with the contributions from the cycle regions
completes the proof.
\end{proof}
The Schr\" oder-Hipparchus number (sequence A001003 in
\cite{Sloan}) count the number of distinct ways to subdivide a
convex $k$-gon into smaller polygons using non-crossing diagonals.
These numbers also count the number of faces of all dimensions of
a $(k-3)$-dimensional associahedron.
\begin{thm}For any $k\leq 5$ and for each $G\in\mathcal{G}_k$,
there exist
exactly two permutations $\pi\in \mathbb{S}_{k-1}$ such that
$G=G_\pi$. For any $k > 5$, the number of graphs in
$\mathcal{G}_k$ that correspond to exactly two permutations is
equal to the $k$-th Schr\" oder-Hipparchus number.
\end{thm}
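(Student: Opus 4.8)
The plan is to read off both statements from Theorem~\ref{T:BBB}. For $G\in\mathcal G_k$ put $N(G)=|\{\pi\in\mathbb S_{k-1}:G_\pi=G\}|$; describing $G$, as in the characterization of $\mathcal G_k$, by the dissection of the $k$-gon $P_k$ it induces together with a choice of clique or cycle for each region, Theorem~\ref{T:BBB} reads $N(G)=2^{t}s_5^{c_5}s_6^{c_6}\cdots$, where $t$ is the number of connected components formed by the complete regions and $c_i$ is the number of cycle regions of length $i>4$. The inputs used repeatedly are: $s_5=2$ (the only simple permutations of $\mathbb S_4$ are $2413$ and $3142$), while $s_i>2$ for every $i\ge 6$; every region of a dissection with at most four vertices is automatically complete ($C_3=K_3$, and $\mathbb S_3$ contains no simple permutation); and the region--adjacency graph of a polygon dissection is connected (indeed a tree), so that if every region of $G$ is complete then $t=1$ and $N(G)=2$.

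For $k\le 5$, fix $G\in\mathcal G_k$ and consider its induced dissection of $P_k$: every region has at most five vertices, so $c_i=0$ for $i\ge 6$ and $c_5\in\{0,1\}$. If $c_5=0$, all regions are complete, $t=1$, and $N(G)=2$. If $c_5=1$, a pentagonal region fits into $P_5$ only when the dissection is trivial, so $G=C_5$, $t=0$, and $N(G)=s_5=2$. Either way $N(G)=2$, which is the first assertion (for $k\ge 3$; the cases $k\le 2$ are degenerate).

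For $k>5$ I first identify the graphs with $N(G)=2$. As $2^{t}\ge 1$ and each $s_i$ with $i\ge 5$ is $\ge 2$, the product $2^{t}\prod_{i\ge 5}s_i^{c_i}$ equals $2$ only when either all $c_i=0$ and $t=1$, or $t=0$, $c_5=1$ and $c_i=0$ for $i\ge 6$; in the latter case $t=0$ forces every region to be a non-trivial cycle, and with $c_5=1$, $c_{\ge 6}=0$ the dissection then has a single pentagonal region, i.e.\ $k=5$, contrary to hypothesis. Hence for $k>5$, $N(G)=2$ precisely when every region of the dissection induced by $G$ is complete; call such a $G$ a \emph{clique dissection}. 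It remains to biject clique dissections with dissections of $P_k$ by non-crossing diagonals. For a dissection $D$, let $G_D$ be the graph on the vertices $A_1,\dots,A_k$ of $P_k$ consisting of the edges of $P_k$, the diagonals of $D$, and all diagonals internal to the regions cut out by $D$. By the converse half of the characterization of $\mathcal G_k$, $G_D\in\mathcal G_k$ and is a clique dissection; the map $D\mapsto G_D$ is onto since a clique dissection $G$ equals $G_D$ for $D$ its induced dissection; and it is injective since $D$ is recovered from $G_D$ as the set of those edges of $G_D$ that are diagonals of $P_k$ crossed by no other edge of $G_D$. Indeed the diagonals of $D$ are pairwise non-crossing and are crossed by no region-internal diagonal (such a diagonal stays inside one region), while a region-internal diagonal lies in a complete region on at least four vertices and is crossed there by another region-internal diagonal. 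Since the number of dissections of a convex $k$-gon is the $k$-th Schr\"oder--Hipparchus number, as recalled above, this is exactly the number of $G\in\mathcal G_k$ with $N(G)=2$.

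The step I expect to require the most care is showing that, for $k>5$, the condition $N(G)=2$ characterizes clique dissections: it rests on Theorem~\ref{T:BBB} together with the numerics $s_5=2<s_6\le s_7\le\cdots$ and the bookkeeping that regions with at most four vertices are always complete and that complete regions coalesce into one component (a tree has one component). The bijection itself is then essentially formal, the one substantive point being that the cutting diagonals of $G_D$ are exactly its uncrossed diagonals --- transparent from the construction, since every crossing of $G_D$ occurs inside a single complete region. Finally one should note the boundary case $k=5$, where $C_5$ still contributes two permutations, so that all of $\mathcal G_5$ qualifies and the total equals $|\mathcal G_5|$ rather than the Schr\"oder--Hipparchus value --- which is exactly why the ranges $k\le 5$ and $k>5$ are separated.
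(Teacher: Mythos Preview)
Your proof is correct and follows the same approach as the paper: both arguments deduce the result from Theorem~\ref{T:BBB}, identifying the graphs with exactly two permutations as precisely those whose regions are all complete and then matching these with dissections of $P_k$. Your write-up is considerably more detailed than the paper's---where the paper says ``direct verification'' for $k\le 5$ you extract the case from the formula $N(G)=2^{t}\prod s_i^{c_i}$ using $s_5=2$, and where the paper simply asserts the bijection with dissections you spell it out and check injectivity via the uncrossed-diagonal criterion---but the underlying logic is the same.
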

\begin{proof}
The claim holds for $k=3,4,5$ by direct verification. For $k>5$,
it follows from Theorem \ref{T:BBB} that a graph $G\in
\mathcal{G}_k$ is represented by exactly two permutations if and
only if all of its regions are complete graphs (i.e., there are no
cycles). The number of such graphs is equal the number of ways to
subdivide a convex $k$-gon using non-crossing diagonals. The
regions in such a subdivision correspond to complete graphs.
\end{proof}
\subsection{Graph $G_\pi$ and hypersimplex}
Recall that the simplices $\sigma_\pi$, where $\pi\in\mathbb
S_{k-1}$ has exactly $m-1$ adjacent inversions, form a
triangulation of the hypersimplex $\Delta_{k,m}$ (see Theorem
\ref{T:triangulationofHip}). The graphs $G_\pi$ admit an
additional elegant interpretation in the context of this
triangulation.

\begin{prop}
Let $\pi \in \mathbb{S}_{k-1}$ be a permutation with exactly $m-1$
descents. Then the graph $G_\pi$ is equal to the intersection of
the simplex $\sigma_{\pi^{-1}}$ with the $1$-skeleta of the
hypersimplex $\Delta_{k,m}$, i.e., $G_\pi=\sigma_{\pi^{-1}}\cap
\Delta^{(\leq 1)}_{k,m}.$
\end{prop}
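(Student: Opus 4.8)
The plan is to unwind both sides of the claimed equality and match them vertex‑by‑vertex and edge‑by‑edge. First I would describe the right‑hand side explicitly: the simplex $\sigma_{\pi^{-1}}=\operatorname{conv}\{e_{S_1},\dots,e_{S_k}\}$ is built from the maximal sorted family $\mathcal S_{\pi^{-1}}=(S_1,\dots,S_k)$ associated to $\pi^{-1}$ via (\ref{sortirana familija}), where $S(\pi^{-1})$ has $m-1$ elements precisely because $\pi$ has $m-1$ descents (here Proposition~\ref{P:adjinv=desc} applied to $\pi^{-1}$ gives $S(\pi^{-1})=D(\pi)$). The $1$‑skeleton $\Delta^{(\le 1)}_{k,m}$ consists of the vertices $e_S$ with $S\in\binom{[k]}{m}$ and the edges $[e_S,e_T]$ with $|S\triangle T|=2$. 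So $\sigma_{\pi^{-1}}\cap\Delta^{(\le 1)}_{k,m}$ is the graph on vertex set $\{S_1,\dots,S_k\}$ (which after identifying $S_i\leftrightarrow i\in[k]$ is exactly $[k]$, the vertex set of $G_\pi$) with an edge between $S_i$ and $S_j$ exactly when $|S_i\triangle S_j|=2$. Thus the whole statement reduces to the combinatorial claim: for $i<j$, $|S_i\triangle S_j|=2$ if and only if $\{i,i+1,\dots,j-1\}$ occurs as a contiguous substring of $\pi$.

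Next I would exploit the telescoping description in (\ref{sortirana familija}): since $S_{\ell+1}=(S_\ell\setminus\{\pi^{-1}_\ell\})\cup\{\pi^{-1}_\ell+1\}$, we get $S_i\triangle S_j=\bigtriangleup_{\ell=i}^{j-1}\big(\{\pi^{-1}_\ell\}\triangle\{\pi^{-1}_\ell+1\}\big)$. Writing $T=\{\pi^{-1}_i,\pi^{-1}_{i+1},\dots,\pi^{-1}_{j-1}\}$ for the set of entries of $\pi^{-1}$ in positions $i$ through $j-1$, a short parity/interval argument shows that $S_i\triangle S_j$ is determined by $T$: an element $t$ lies in the symmetric difference iff the number of $\ell\in\{i,\dots,j-1\}$ with $\pi^{-1}_\ell\in\{t-1,t\}$ is odd. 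When $T$ is a set of consecutive integers $\{a,a+1,\dots,b\}$ — equivalently, when these entries of $\pi^{-1}$ form a block — this count is odd exactly for $t=a$ and $t=b+1$, so $|S_i\triangle S_j|=2$; and by induction on $j-i$ (or by a direct "leftmost/rightmost" argument) one checks that if $T$ is \emph{not} an interval then the symmetric difference has size $\ge 4$. The key translation between $\pi$ and $\pi^{-1}$ is: $\{\pi^{-1}_i,\dots,\pi^{-1}_{j-1}\}$ is the interval $\{i,i+1,\dots,j-1\}$ \emph{as a set} if and only if the positions occupied by the values $i,i+1,\dots,j-1$ in $\pi^{-1}$ are exactly $\{i,\dots,j-1\}$ — but reading this off $\pi$ rather than $\pi^{-1}$ we should instead say: $\{\pi^{-1}_i,\dots,\pi^{-1}_{j-1}\}=\{i,\dots,j-1\}$ iff the values at positions $i,\dots,j-1$ of $\pi^{-1}$ form the consecutive block $\{i,\dots,j-1\}$, which by the standard inverse symmetry means the values $i,\dots,j-1$ appear consecutively (in some order) in $\pi$, i.e.\ $\{i,i+1,\dots,j-1\}$ is a contiguous subsequence of $\pi$ — precisely the adjacency condition in the definition of $G_\pi$.

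An alternative and perhaps cleaner route uses Remark~\ref{R:rasporedusorted} directly: it describes, for each value $t\in[k]$, the exact set of indices $j$ with $t\in S_j$, and that set is always an interval of indices \emph{unless} $t-1\in S(\pi^{-1})$, in which case it is a union of an initial and a final segment. From this description one reads off, for fixed $i<j$, exactly which $t$ switch membership between $S_i$ and $S_j$: these are governed by whether the endpoints $\pi^{-1}(t-1)+1$, $\pi^{-1}(t)$ (or their wrap‑around analogues) fall in the window $[i,j-1]$. Carrying out this bookkeeping shows $|S_i\triangle S_j|=2$ iff there are exactly two such "switch points", which happens iff positions $i,\dots,j-1$ of $\pi^{-1}$ carry a block of consecutive values, i.e.\ iff $\{i,\dots,j-1\}$ is contiguous in $\pi$. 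I expect the main obstacle to be the careful case analysis around the wrap‑around behaviour in Remark~\ref{R:rasporedusorted} (the third bullet, when $t-1\in S(\pi^{-1})$), since there the membership pattern of $t$ in the $S_j$'s is not a single interval, and one must verify that this does not spuriously create or destroy a size‑$2$ symmetric difference; handling the boundary values $t=1$ and $t=k$ uniformly (using the conventions $v_0=0$, $v_k=q$ analogues for the first and last sets) is the fiddly part, but it is entirely routine once the interval bookkeeping is set up.
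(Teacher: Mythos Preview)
Your approach is essentially identical to the paper's: it too telescopes $e_{S_i}-e_{S_j}=\sum_{\ell=i}^{j-1}(e_{\pi^{-1}_\ell}-e_{\pi^{-1}_\ell+1})$, observes this has the form $e_s-e_t$ precisely when $T=\{\pi^{-1}_i,\dots,\pi^{-1}_{j-1}\}$ is an interval of consecutive integers, and then notes that $T$ being an interval says exactly that the values $i,\dots,j-1$ occupy a block of consecutive positions in $\pi$, i.e.\ $ij\in E(G_\pi)$. One wording slip to clean up: in your ``key translation'' paragraph you momentarily require $T=\{i,\dots,j-1\}$ (the \emph{specific} interval) rather than merely that $T$ is \emph{some} interval $\{a,\dots,b\}$---the latter is the correct condition, and both your earlier parity description and your final conclusion already have it right, so that sentence can simply be deleted.
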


\begin{proof}
 By Proposition \ref{P:adjinv=desc}, the inverse permutation
 $\pi^{-1}$ has exactly $m-1$ adjacent inversions. Let
$S_{\pi^{-1}}=(S_1,S_2,\ldots,S_k)$ be the sorted family
associated with $\pi^{-1}$, as defined in (\ref{sortirana
familija}). By Theorem \ref{T:triangulationofHip}, it follows that
$\sigma_{\pi^{-1}}=\operatorname{conv}\{e_{S_1}
,e_{S_2},\ldots,e_{S_k}\}\subset \Delta_{k,m}$. We claim that $ij$
is an edge of $G_\pi$ if and only if the segment
$[e_{S_i},e_{S_j}]$ is an edge of $\Delta_{k,m}$. Recall that
$[e_{S_i},e_{S_j}]$ is an edge of $\Delta_{k,m}$ if and only if
$e_{S_i}-e_{S_j}=e_s-e_t$ for some $s,t\in[k],s\neq t$.

Let $i,j\in[k],i<j$. We have that
\[e_{S_i}-e_{S_j}=\sum_{l=i}^{j-1}
(e_{S_l}-e_{S_{l+1}})=\sum_{l=i}^{j-1}(e_{\pi^{-1}_{l}}-
e_{\pi^{-1}_{l}+1}).\] The last expression is equal to
$e_s-e_{s+j-i}$
 if and only if
$$\{\pi^{-1}_i,\pi^{-1}_{i+1},\ldots,\pi^{-1}_{j-1}\}=
\{s,s+1,\ldots,s+j-i-1\}.$$
 This condition holds if and only if
  $\{i,i+1,\ldots,j-1\}$ form a contiguous block in $\pi$,
   which happens if and only if $ij$ is an edge in the graph $G_\pi$.
\end{proof}
The next corollary characterize graphs $G_\pi$ for all
permutations with exactly one descent.
\begin{cor}If $\pi\in \mathbb{S}_{k-1}$
has only one descent, then $G_\pi$ can be obtained by gluing a few
complete graphs on the edges of a polygon with an odd number of
vertices.
\end{cor}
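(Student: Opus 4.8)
The plan is to combine the preceding Proposition with the explicit triangulation of $\Delta_{k,2}$ from Theorem~\ref{T:triangulationofHip}. Since $\pi$ has exactly one descent, that Proposition applies with $m=2$: $G_\pi=\sigma_{\pi^{-1}}\cap\Delta^{(\leq 1)}_{k,2}$, and by Proposition~\ref{P:adjinv=desc} the permutation $\pi^{-1}$ has a single adjacent inversion, say $S(\pi^{-1})=\{a\}$, so $\sigma_{\pi^{-1}}=\operatorname{conv}\{e_{S_1},\dots,e_{S_k}\}$ for the associated maximal sorted family $\mathcal S_{\pi^{-1}}=(S_1,\dots,S_k)$ of $2$-element subsets of $[k]$. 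Regarding each $S_i$ as an edge of the complete graph $K_k$ on vertex set $[k]$, let $\Gamma$ be the subgraph of $K_k$ with edge set $\{S_1,\dots,S_k\}$. Since $[e_S,e_T]$ is an edge of $\Delta_{k,2}$ exactly when $|S\triangle T|=2$, i.e. $|S\cap T|=1$, the graph $G_\pi$ is precisely the line graph $L(\Gamma)$, where the vertex $i$ of $G_\pi$ corresponds to the edge $S_i$ of $\Gamma$. So it suffices to show that $\Gamma$ is an odd cycle with pendant edges attached and that the line graph of such a graph is an odd polygon with complete graphs glued onto its edges.

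The main work is to identify $\Gamma$. From the recursion $S_{i+1}=(S_i\setminus\{\pi^{-1}_i\})\cup\{\pi^{-1}_i+1\}$, consecutive $S_i,S_{i+1}$ share exactly one element, hence are adjacent edges of $K_k$, so $L(\Gamma)$ is connected and therefore $\Gamma$ is connected; by Remark~\ref{R:rasporedusorted} it is spanning. Thus $\Gamma$ is a connected graph on $k$ vertices with $k$ distinct edges, so it has a unique cycle, and that cycle is odd: an even cycle with edges $S_{i_1},\dots,S_{i_{2t}}$ would give, by telescoping, the non-trivial affine relation $\sum_{r=1}^{2t}(-1)^r e_{S_{i_r}}=0$ among the vertices of the simplex $\sigma_{\pi^{-1}}$, which is impossible. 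Writing $S_i=\{x_i,y_i\}$ with $x_i<y_i$, the sorted condition makes $(x_i)$ and $(y_i)$ weakly increasing with consecutive pairs differing by a single unit step, so the $S_i$ are the lattice points of a monotone staircase path from $\{1,a+1\}$ to $\{a+1,k\}$ that stays in the region $x<y$. I would then show, from this staircase description, that every vertex of $\Gamma$ of degree at least $2$ lies on the cycle — equivalently, $\Gamma$ is its unique (odd) cycle together with pendant edges — by checking that a pendant path through a degree-$2$ vertex off the cycle would force the staircase either to repeat a step or to produce an edge incompatible with the vertex neighbourhoods that the path prescribes. This is the step I expect to be the main obstacle; alternatively one could invoke the classification of sorted families of $2$-subsets, or known facts about triangulations of the second hypersimplex.

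Granting the description of $\Gamma$, the rest is a routine line-graph computation. The edges of the unique cycle of $\Gamma$ form an induced odd cycle $\widehat C$ in $L(\Gamma)$ (the line graph of a cycle is a cycle of the same length); this is the required polygon with an odd number of vertices. For each vertex $v$ of $\Gamma$, the edges incident to $v$ form a clique $K(v)$ of $L(\Gamma)$ of size $\deg_\Gamma(v)$; if $v$ lies on the cycle, $K(v)$ contains the two cycle edges at $v$, which are consecutive vertices of $\widehat C$, so $K(v)$ is glued onto that edge of $\widehat C$, whereas for a leaf $v$ the clique $K(v)$ is a single vertex. Because $\Gamma$ is unicyclic it has no chord of its cycle, so the cliques $K(v)$ at distinct cycle vertices meet only in vertices of $\widehat C$, and since every edge of $L(\Gamma)$ lies in some $K(v)$ we conclude $G_\pi=L(\Gamma)=\widehat C\cup\bigcup_{v\in V(\widehat C)}K(v)$; that is, $G_\pi$ is obtained by gluing the complete graphs $K(v)$ onto the edges of the odd polygon $\widehat C$. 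When the cycle of $\Gamma$ happens to be a triangle, this odd polygon is a triangle — exactly the case where $G_\pi$ contains no cycle region of length greater than $4$.
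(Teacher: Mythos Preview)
The paper states this corollary without proof, so there is nothing to compare against; your line-graph reduction is the natural way to make the statement precise, and your arguments for connectedness, spanning, and oddness of the unique cycle (via the affine relation) are all correct.

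The step you flag is the only real content, and it can be completed cleanly once you separate the two roles of the vertex $a{+}1$. Form the bipartite graph $B$ on vertex set $[1,a{+}1]_L\sqcup[a{+}1,k]_R$ with an edge $v_L\,w_R$ for each staircase point $(v,w)$; then $B$ has $k{+}1$ vertices and $k$ edges and is connected, hence is a tree, and $\Gamma$ is recovered by identifying the two copies of $a{+}1$, so the unique cycle of $\Gamma$ is the image of the tree path joining them. Now for $v\in[1,a]$ with $\deg_B(v_L)\ge 2$, the neighbours of $v_L$ form a nondegenerate interval $[\alpha_v,\beta_v]_R$; because consecutive $x$-runs of the staircase overlap in exactly one $y$-value, every $w_R$ with $\alpha_v<w<\beta_v$ has $v_L$ as its \emph{only} neighbour, while $(\alpha_v)_R$ and $(\beta_v)_R$ are also adjacent to $(v{-}1)_L$ and $(v{+}1)_L$ respectively. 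Deleting $v_L$ therefore leaves $(a{+}1)_L$ in the component containing $(v{+}1)_L,\dots,(a{+}1)_L$ and $(a{+}1)_R$ in the component containing $1_L,\dots,(v{-}1)_L$, so $v_L$ lies on the tree path. The symmetric argument handles $w_R$ for $w\in[a{+}2,k]$. Thus $B$ is a caterpillar with spine joining the two copies of $a{+}1$, so $\Gamma$ is an odd cycle with pendant edges, and your line-graph computation finishes the proof.
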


For a permutation $\pi=\pi_1\pi_2\ldots\pi_{k-1}\in
\mathbb{S}_{k-1}$, define
$$\verb"ear"(\pi)=\big|\{i:|\pi_{i+1}-\pi_i|=1\}
\big|+\big|\{1,k-1\}\cap\{\pi_1,\pi_{k-1}\}\big|.$$ Let us refer
to the diagonals $\{A_iA_{i+2} : i \in [k]\}$ (with indices taken
modulo $k$) of the polygon $P_k = A_1A_2\ldots A_k$, in which the
graph $G_\pi$ is embedded, as \emph{short diagonals}. Then
$\mathrm{ear}(\pi)$ counts exactly the number of short diagonals
that appear as edges in the graph $G_\pi$.

 If $|S(\pi)|=m-1$, then $\sigma_\pi$ appears in the
triangulation of $\Delta_{k,m}$. We say that $\sigma_\pi$ is
\emph{adjacent }to a facet of $\Delta_{k,m}$ if their intersection
is a facet of $\sigma_\pi$.

\begin{thm} Let
$\pi=\pi_1\pi_2\ldots\pi_{k-1}\in \mathbb{S}_{k-1}$ be a
permutation such that $|S(\pi)|=m-1$. Then the simplex
$\sigma_\pi$ is adjacent to exactly $\verb"ear"(\pi)$ facets of
the hypersimplex $\Delta_{k,m}$.
\end{thm}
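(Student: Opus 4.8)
The plan is to translate the geometric condition ``$\sigma_\pi$ is adjacent to a facet of $\Delta_{k,m}$'' into a combinatorial condition on the sorted family $\mathcal S_\pi = (S_1,\dots,S_k)$, and then to recognize that combinatorial condition as precisely one of the configurations counted by $\verb"ear"(\pi)$. Recall from \eqref{E:facets} that the facets of $\Delta_{k,m}$ are the sets $F_i = \{x : x_i = 0\}$ and $H_i = \{x : x_i = 1\}$ for $i \in [k]$. A facet of the simplex $\sigma_\pi = \operatorname{conv}\{e_{S_1},\dots,e_{S_k}\}$ is obtained by deleting one vertex $e_{S_r}$; call it $\tau_r = \operatorname{conv}\{e_{S_l} : l \ne r\}$. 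So the first step is: $\sigma_\pi$ is adjacent to a facet of $\Delta_{k,m}$ along $\tau_r$ if and only if $\tau_r \subseteq F_i$ for some $i$ (i.e.\ $i \notin S_l$ for all $l \ne r$) or $\tau_r \subseteq H_i$ for some $i$ (i.e.\ $i \in S_l$ for all $l \ne r$).

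The second step is to use Remark~\ref{R:rasporedusorted}, which tells us exactly which sets $S_l$ contain a given element $i \in [k]$: the indices $l$ with $i \in S_l$ form either an interval (when $i-1 \notin S(\pi)$, namely $\pi^{-1}(i-1)+1 \le l \le \pi^{-1}(i)$, with the usual conventions $\pi^{-1}(0)=0$, $\pi^{-1}(k)=k$) or the complement of an interval (when $i-1 \in S(\pi)$). Now the condition ``$i \notin S_l$ for all $l \ne r$'' says the set $\{l : i \in S_l\}$ is contained in $\{r\}$, i.e.\ is empty or equals $\{r\}$; since each $i$ with $i \in [k]$ lies in at least one $S_l$, this forces $\{l : i \in S_l\} = \{r\}$, a single index. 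Dually, ``$i \in S_l$ for all $l \ne r$'' says $\{l : i \notin S_l\} \subseteq \{r\}$. I would then go through the cases of Remark~\ref{R:rasporedusorted}: for an interval $[\pi^{-1}(i-1)+1, \pi^{-1}(i)]$ to be a single element we need $\pi^{-1}(i) = \pi^{-1}(i-1)+1$, i.e.\ $i-1$ and $i$ are adjacent entries of $\pi$ (hence $|\pi_{l}-\pi_{l+1}|=1$ somewhere, contributing to $\verb"ear"$), or the boundary cases $i=1$ with $\pi^{-1}(1)=1$ (so $\pi_1 = 1$) and $i=k$ with $\pi^{-1}(k-1)=k-1$ (so $\pi_{k-1}=k-1$), which are exactly the $|\{1,k-1\}\cap\{\pi_1,\pi_{k-1}\}|$ term; and for a complement-of-interval to have its ``missing interval'' be a single element we get the analogous conditions when $i-1 \in S(\pi)$. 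Matching these cases against the definition $\verb"ear"(\pi)=|\{l:|\pi_{l+1}-\pi_l|=1\}|+|\{1,k-1\}\cap\{\pi_1,\pi_{k-1}\}|$ shows the facet-adjacencies of $\sigma_\pi$ are in bijection with the quantities summed in $\verb"ear"(\pi)$.

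The third step is to make sure the count is exact, with no double-counting and no missed adjacency. Here I would invoke the interpretation stated just before the theorem: $\verb"ear"(\pi)$ counts the short diagonals $A_iA_{i+2}$ (indices mod $k$) that are edges of $G_\pi$, and a short diagonal $A_iA_{i+2}$ being an edge of $G_\pi$ means $\{i,i+1\}$ (or the appropriate two-element consecutive set, including the wrap-around cases giving the polygon ``ears'' at $A_1$ and $A_k$) is contiguous in $\pi$ --- which is precisely the adjacency condition on entries of $\pi$ found in step two. So each facet-adjacency of $\sigma_\pi$ corresponds to a unique short diagonal (equivalently, a unique ear of the dissection defined by $G_\pi$), and conversely. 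I expect the main obstacle to be the bookkeeping in step two: carefully handling the boundary conventions $v_0 = 0$, $v_k = q$ and $\pi^{-1}(0)=0$, $\pi^{-1}(k)=k$, distinguishing the ``interval'' versus ``complement of interval'' cases of Remark~\ref{R:rasporedusorted}, and checking that the $F_i$-type and $H_i$-type facets do not both arise from the same index $i$ so that the total is exactly $\verb"ear"(\pi)$ and not something larger. Once the dictionary between ``single-element (co)interval of occurrences of $i$'' and ``$i-1,i$ adjacent in $\pi$ (or an endpoint ear)'' is pinned down, the theorem follows.
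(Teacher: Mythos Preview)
Your approach is correct and essentially the same as the paper's: use Remark~\ref{R:rasporedusorted} together with the facet description~\eqref{E:facets} to determine, for each $i\in[k]$, exactly when $i$ lies in (or avoids) all but one of the $S_l$, and match the resulting conditions on $\pi$ to the terms defining $\verb"ear"(\pi)$. The paper carries this out by simply listing the six cases ($\sigma_\pi$ adjacent to $F_1,F_i,F_k,H_1,H_i,H_k$) and reading off the corresponding conditions on $\pi$; your ``third step'' via short diagonals of $G_\pi$ is an unnecessary detour, and the double-counting worry you raise is moot since for each $i$ the $F_i$-condition and $H_i$-condition are mutually exclusive and each adjacency contributes to a distinct summand of $\verb"ear"(\pi)$.
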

\begin{proof}
The vertices of $\sigma_\pi$ are given by (\ref{sortirana
familija}) and described further in
Remark~\ref{R:rasporedusorted}. Using this, together with the
description of the facets of $\Delta_{k,m}$ in (\ref{E:facets}),
we list the conditions under which $\sigma_\pi$ is adjecent to a
facet of $\Delta_{k,m}$:
\begin{itemize}
    \item $\sigma_{\pi}$ is adjacent to $F_1$ if and only if
    $\pi_1=1$.
    \item $\sigma_{\pi}$ is adjacent to $F_i$ if and only if
    $i-1$ is immediately before $i$ in $\pi$.
    \item $\sigma_{\pi}$ is adjacent to $F_k$ if and only if
    $\pi_{k-1}=k-1$.
\end{itemize}
Similarly, we obtain that
\begin{itemize}
    \item $\sigma_{\pi}$ is adjacent to $H_1$ if and only if
    $\pi_{k-1}=1$.
    \item $\sigma_{\pi}$ is adjacent to $H_i$ if and only if
    $i-1$ is immediately after $i$ in $\pi$.
    \item $\sigma_{\pi}$ is adjacent to $H_k$ if and only if
    $\pi_{1}=k-1$.
\end{itemize}
In total, these correspond precisely to the definition of
$\operatorname{ear}(\pi)$, completing the proof.
\end{proof}
The number of simplices in the triangulation of $\Delta_{k,m}$
that are adjacent to the facet $F_i$ is $A(k-1,m-1)$, while the
number adjacent to the facet $H_i$ is $A(k-1,m-2)$.

A maximal simplex $\sigma_\pi$ in the triangulation of
$\Delta_{k,m}$ is called an \emph{internal simplex} if it is not
adjacent to any facet of $\Delta_{k,m}$.
\begin{cor} Let
$\pi=\pi_1\pi_2\ldots\pi_{k-1}\in \mathbb{S}_{k-1}$ be a
permutation such that $|S(\pi)|=m-1$. Then the simplex
$\sigma_\pi$ is an internal simplex in the triangulation of
$\Delta_{k,m}$ if and only if the following conditions are
satisfied:
\begin{enumerate}
    \item For all $i\in [k-1]$, the numbers
    $i$ and $i+1$ are not adjacent in $\pi$.
    \item $\{\pi_1,\pi_{k-1}\}\cap \{1,k-1\}=\emptyset.$
\end{enumerate}

\end{cor}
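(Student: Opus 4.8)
The plan is to derive this corollary directly from the preceding theorem, which asserts that $\sigma_\pi$ is adjacent to exactly $\operatorname{ear}(\pi)$ facets of $\Delta_{k,m}$. Being an internal simplex means being adjacent to no facet whatsoever, so $\sigma_\pi$ is internal if and only if $\operatorname{ear}(\pi)=0$. Thus the entire task reduces to unpacking when $\operatorname{ear}(\pi)=0$, using the definition
$$\operatorname{ear}(\pi)=\big|\{i:|\pi_{i+1}-\pi_i|=1\}\big|+\big|\{1,k-1\}\cap\{\pi_1,\pi_{k-1}\}\big|.$$
Since both summands are nonnegative integers, their sum vanishes precisely when each summand vanishes separately.

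First I would observe that the first summand $\big|\{i:|\pi_{i+1}-\pi_i|=1\}\big|$ counts the positions where two numbers differing by $1$ sit next to each other in $\pi$; this is exactly the count of pairs $\{i,i+1\}$ with $i$ and $i+1$ adjacent in $\pi$ (in either order). Hence this summand is zero if and only if condition (1) of the corollary holds: for all $i\in[k-1]$, the numbers $i$ and $i+1$ are not adjacent in $\pi$. Second, the summand $\big|\{1,k-1\}\cap\{\pi_1,\pi_{k-1}\}\big|$ is zero if and only if neither $1$ nor $k-1$ occupies the first or last position of $\pi$, which is exactly condition (2): $\{\pi_1,\pi_{k-1}\}\cap\{1,k-1\}=\emptyset$. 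Combining the two equivalences gives that $\operatorname{ear}(\pi)=0$ if and only if both (1) and (2) hold, which is the statement of the corollary.

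I do not anticipate any genuine obstacle here, since the corollary is essentially a restatement of the theorem together with the definition of an internal simplex; the only point requiring a sentence of care is the observation, already noted in the paragraph following the definition of $\operatorname{ear}(\pi)$ in the excerpt, that $\operatorname{ear}(\pi)$ equals the number of short diagonals $A_iA_{i+2}$ of $P_k$ that are edges of $G_\pi$ — but for this proof one only needs the raw combinatorial definition of $\operatorname{ear}(\pi)$ and the fact that a sum of nonnegative integers is zero iff each term is zero. I would therefore write the proof in three short steps: (i) invoke the theorem to rephrase ``internal'' as ``$\operatorname{ear}(\pi)=0$''; (ii) split $\operatorname{ear}(\pi)$ into its two summands and note each must vanish; (iii) identify the vanishing of the first summand with condition (1) and the vanishing of the second with condition (2).
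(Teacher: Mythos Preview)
Your proposal is correct and matches the paper's intended route: the corollary is stated immediately after the theorem with no separate proof, so deriving it by observing that ``internal'' means $\operatorname{ear}(\pi)=0$ and then splitting the two nonnegative summands of $\operatorname{ear}(\pi)$ is exactly what is expected. The identification of the vanishing of each summand with conditions (1) and (2) is straightforward and accurate.
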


For $ k = 2m + 1$, the unique permutation in $ \mathbb{S}_{k-1}$
with exactly one adjacent inversion satisfying the given
conditions is
$$
\pi = (m+1)\,1\, (m+2)\, 2\, (m+3)\, \cdots\, (m-1)\, (2m)\, m,
$$
as shown in \cite{Lam}. The total number of interior simplices in
$\Delta_{k,m}$, summed over all $m$, equals the number of circular
arrangements of $ 1,2,\ldots,k $ such that adjacent numbers never
differ by 1 modulo $k$; see OEIS A002816~\cite{Sloan}.

Hertzsprung's problem counts the number of ways to place $k$
non-attacking kings on a $k \times k$ chessboard, with one per row
and column, or equivalently, the number of permutations of length
$k $ with no rising or falling successions (OEIS
A002464~\cite{Sloan}). A variant of Hertzsprung's problem asks for
the number of such arrangements on a $ (k{-}1)\times(k{-}1)$ board
with the four corners removed, again with one king per row and
column. This also equals the total number of interior simplices in
$\Delta_{k,m} $ for $ 1 \leq m \leq k{-}1$.

\subsection{Dihedral group action}

We identify the vertex set $[k]$ of $G_{\pi}$ with the vertices of
a regular $k$-gon, also denoted by $[k]$. The dihedral group
$D_k=\{e,r,r^2,\ldots,r^{k-1},s,rs,\ldots,r^{k-1}s\}$ acts
naturally on $[k]$ by $r(i)=i+1$ and $s(i)=k+2-i$, for all $i\in
[k]$, with arithmetic taken modulo $k$. This action of $D_k$ on
$[k]$ induces a corresponding action on the set
$\mathcal{G}_k=\{G_\pi:\pi \in \mathbb{S}_{k-1}\}$ by rotating and
reflecting the graphs $G_\pi$. For an element $d\in D_k$ and a
subset $X=\{a_1,a_2,\ldots,a_m\}\subset [k]$, we define
$d(X)=\{d(a_1),d(a_2),\ldots,d(a_m)\}$, which gives an action of
$D_k$ on the collection of $m$-element subsets of $[k]$.

Our goal is to define an action of $D_k$ on $\mathbb S_{k-1}$ that
is compatible with the induced action on $\mathcal{G}_k$, that is,
we seek an action such that $G_{d(\pi)}=d(G_{\pi})$ for all $d\in
D_k$ and $\pi\in\mathbb S_{k-1}$. It suffices to define this
action on the generators $r$ and $s$.

\begin{defn}\label{D:actiononS} Let $\pi=\pi_1\ldots\pi_{t-1}\,
(k-1)\, \pi_{t+1}\ldots\pi_{k-1}\in\mathbb S_{k-1}$. We define the
\emph{rotation} of $\pi$ (i.e., the action of $r$ on $\pi$) by
\[r(\pi)=(\pi_{t+1}+1)
\ldots(\pi_{k-1}+1)\,1\,(\pi_1+1)\ldots(\pi_{t-1}+1).\] For
$\pi=\pi_1\ldots\pi_{i-1}\, 1\,\pi_{i+1}\ldots\pi_{k-1}\in\mathbb
S_{k-1}$, we define the \emph{symmetry} (or reflection) of $\pi$
(i.e., the action of $s$ on $\pi$) by
\[s(\pi)=(k+1-\pi_{i-1})\ldots(k+1-\pi_{1})1(k+1-\pi_{k-1})
\ldots(k+1-\pi_{i+1}).\]
\end{defn}
\begin{prop}\label{P:DejstvoD-knaS} The maps $r$ and $s$
defined in Definition~\ref{D:actiononS}
 generate an action of $D_k$ on $\mathbb S_{k-1}$.
\end{prop}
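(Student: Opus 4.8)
The plan is to verify that $r$ and $s$ as given in Definition~\ref{D:actiononS} satisfy the Coxeter/dihedral relations $r^k=e$, $s^2=e$, and $(sr)^2=e$ (equivalently $srs=r^{-1}$), so that the assignment extends to a genuine group homomorphism $D_k\to\operatorname{Sym}(\mathbb S_{k-1})$ and hence an action. Rather than attack these relations by brute-force substitution of the piecewise formulas, I would first record the conceptual reason the formulas are correct: $G_{r(\pi)}=r(G_\pi)$ and $G_{s(\pi)}=s(G_\pi)$. Once this compatibility is established, the relations become almost automatic, because the induced maps on $\mathcal G_k$ already form an action of $D_k$ (the usual geometric action on a regular $k$-gon), and the map $\pi\mapsto G_\pi$, while not injective, is a well-defined function whose fibers are permuted by our candidate maps.

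First I would prove $G_{r(\pi)}=r(G_\pi)$ and $G_{s(\pi)}=s(G_\pi)$ directly from the definition of $G_\pi$. For the rotation: writing $\pi=\pi_1\ldots\pi_{t-1}\,(k-1)\,\pi_{t+1}\ldots\pi_{k-1}$, a set $\{i,i+1,\ldots,j-1\}\subset[k-1]$ is contiguous in $\pi$ precisely when a corresponding block of $[k]$-vertices is an edge of $G_\pi$; one checks that the recipe for $r(\pi)$ — cyclically rotating the one-line notation so that $k-1$ is deleted, everything is shifted by $+1$, and a new $1$ is inserted at the seam — translates contiguous sets of $\pi$ into contiguous sets of $r(\pi)$ exactly according to the rotation $i\mapsto i+1 \pmod k$ on vertices of $P_k$. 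The boundary cases (blocks wrapping around the position of $k-1$, and the short edges $A_kA_1$) are the only subtlety and must be treated carefully, using that an edge $A_aA_b$ of $G_\pi$ with $a<b$ either comes from the contiguous set $\{a,\ldots,b-1\}$ or, reading cyclically, corresponds to the complementary contiguous set. The symmetry case is analogous: $s(\pi)$ reverses and complements the one-line notation around the position of $1$, which on vertices realizes $i\mapsto k+2-i$, and one checks contiguous sets map correctly. Here Proposition~\ref{P:propertiesofG}(i), namely $G_\pi=G_{\pi^{\mathrm{op}}}$, is what makes the reversal harmless at the level of graphs and explains why $s$ is well-defined up to the $\pi\leftrightarrow\pi^{\mathrm{op}}$ choice.

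Next, granting $G_{d(\pi)}=d(G_\pi)$ for $d\in\{r,s\}$, I would check that $r$ and $s$ are bijections of $\mathbb S_{k-1}$: the inverse of $r$ is described by the evident inverse recipe (locating the entry $1$, deleting it, shifting down by $1$, inserting $k-1$ at the seam), and $s$ is visibly an involution from its formula since applying the reverse-and-complement-around-$1$ operation twice returns $\pi$ — this gives $s^2=e$ outright. For $r^k=e$: iterating the rotation recipe $k$ times returns the one-line notation to its original cyclic position with the total shift $+k\equiv 0$, so $r^k(\pi)=\pi$; alternatively, since $G_{r^k(\pi)}=r^k(G_\pi)=G_\pi$ and $r^k$ permutes each fiber of $\pi\mapsto G_\pi$, one reduces to checking $r^k$ fixes each $\pi$ using that fibers have size at most $2$ and $r$ cannot swap the two elements of a two-element fiber (they are $\sigma$ and $\sigma^{\mathrm{op}}$, and the recipe for $r$ respects this pairing). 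Finally $srs=r^{-1}$ follows the same way: $G_{srs(\pi)}=srs(G_\pi)=r^{-1}(G_\pi)=G_{r^{-1}(\pi)}$, and a short direct comparison of the two recipes on one-line notation (or the fiber-size argument) upgrades this to equality in $\mathbb S_{k-1}$.

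The main obstacle I anticipate is the bookkeeping in the boundary cases of $G_{r(\pi)}=r(G_\pi)$: precisely describing how a contiguous set of $\pi$ straddling the position of the entry $k-1$, or a contiguous set of the form $\{1,2,\ldots,j-1\}$ or $\{i,\ldots,k-1\}$ touching the ends, corresponds under the rotation to an edge of $r(G_\pi)$ — including the wrap-around edge $A_kA_1$ of $P_k$. It is here that one must be scrupulous about the two ways an edge $A_aA_b$ of $G_\pi$ can arise (from $\{a,\ldots,b-1\}$ directly, versus the cyclically complementary block), and about the insertion of the new value $1$ at the seam in $r(\pi)$. Once that correspondence is pinned down, everything else is formal, and the fiber-size bound (Theorem~\ref{T:BBB}, or just the $2^t s_5^{c_5}\cdots$ count specialized to show fibers are never larger than what a single diagonal or a single odd cycle permits) provides a clean, low-computation route to the three defining relations.
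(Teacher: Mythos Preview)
Your central strategy has a genuine gap. The fiber-size claim is false: by Theorem~\ref{T:BBB} the fiber of $\pi\mapsto G_\pi$ over a graph $G$ has cardinality $2^t s_5^{c_5}s_6^{c_6}\cdots$, which equals $2$ only when $G$ has no cycle regions of length $\geq 5$ and exactly one connected block of complete regions. For instance the fiber over $C_7$ has $s_7=46$ elements, and fibers over longer cycles are larger still. Consequently, knowing that $r^k$ (or $srsr$) preserves each fiber of $\pi\mapsto G_\pi$ tells you nothing about whether it is the identity on $\mathbb S_{k-1}$: a permutation of a $46$-element set that fixes the set need not fix its elements. The equivariance $G_{d(\pi)}=d(G_\pi)$ therefore cannot be leveraged into a proof of the dihedral relations; in the paper it is in fact derived \emph{after} Proposition~\ref{P:DejstvoD-knaS}, as a consequence of the action, not as an ingredient.

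This means a direct verification of the relations on $\mathbb S_{k-1}$ is unavoidable, and your proposal only gestures at one (``iterating the rotation recipe $k$ times returns\ldots'', ``a short direct comparison''). The paper's device makes this verification clean: given $\pi=\pi_1\pi_2\cdots\pi_{k-1}$, place the $k$ labels $\pi_1,\pi_2,\ldots,\pi_{k-1},k$ clockwise at the vertices of a regular $k$-gon. Then $r$ is ``add $1$ modulo $k$ to every label, then read off clockwise starting just after the vertex now labeled $k$'', and $s$ is ``replace each label $i$ by $k+1-i$, then read off counterclockwise from $k$''. In this encoding $r$ and $s$ are literally the rotation and reflection of a labeled $k$-gon, so $r^k=s^2=(rs)^2=\mathrm{Id}$ are immediate geometric facts, with no piecewise bookkeeping and no appeal to $G_\pi$ at all. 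Your direct arguments for $r^k$ and $s^2$ are groping toward this picture; for $srs=r^{-1}$ you give nothing beyond the failed fiber argument.
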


\begin{proof} It suffices to verify that
the defining relations of $D_k$ are satisfied, namely,
\begin{equation}\label{E:identiteta}
r^k=s^2=(rs)^2=\mathrm{Id}_{\mathbb S_{k-1}},
\end{equation} where $\mathrm{Id}_{\mathbb S_{k-1}}$
denotes the identity map. We appeal to the following geometric
interpretation of the maps $r$ and $s$. Given a permutation
$\pi=\pi_1\pi_2\ldots\pi_{k-1}\in \mathbb{S}_{k-1}$, place the
elements $\pi_1,\pi_2,\ldots,\pi_{k-1},k$ at the vertices of a
regular $k$-gon in the clockwise order. To compute $r(\pi)$,
 increment each label $i\in[k]$ by $i+1$ modulo $k$, and then read the
resulting sequence starting at the position labeled $k$,
proceeding clockwise. To obtain $s(\pi)$, replace each label
$i\in[k]$ with $k+1-i$ and then read the sequence starting at $k$,
proceeding counterclockwise. Under this interpretation, the
identites in (\ref{E:identiteta}) follow by a direct calculation.
\end{proof}
\begin{thm} The map $\pi\mapsto G_\pi$ is $D_k$-equivariant,
that is for all $d\in D_k$ and $\pi\in \mathbb{S}_{k-1}$, we have
$G_{d(\pi)}=d(G_\pi)$.
\end{thm}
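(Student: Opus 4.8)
The plan is to verify the equivariance on the two generators $r$ and $s$ of $D_k$ and then invoke the fact (Proposition~\ref{P:DejstvoD-knaS}) that these maps generate a genuine $D_k$-action; since $\{r,s\}$ generates $D_k$ and both sides of $G_{d(\pi)}=d(G_\pi)$ are multiplicative in $d$ (once we know each generator works, a word $d=d_1d_2\cdots d_\ell$ in the generators satisfies $G_{d(\pi)}=d_1(G_{d_2\cdots d_\ell(\pi)})=\cdots=d(G_\pi)$), it suffices to prove $G_{r(\pi)}=r(G_\pi)$ and $G_{s(\pi)}=s(G_\pi)$ for every $\pi\in\mathbb S_{k-1}$.

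First I would set up the combinatorial translation of an edge of $G_\pi$: by definition, $ij$ (with $i<j$) is an edge of $G_\pi$ precisely when the set $\{i,i+1,\ldots,j-1\}$ occurs as a block of consecutive positions in the word $\pi$. Using the geometric interpretation from the proof of Proposition~\ref{P:DejstvoD-knaS} — place $\pi_1,\ldots,\pi_{k-1},k$ at the vertices of a regular $k$-gon in clockwise order — an edge $ij$ of $G_\pi$ corresponds to a \emph{circular arc} of consecutive vertices of this $k$-gon whose labels form a set of \emph{cyclically consecutive} residues; more precisely, a pair of polygon vertices spans an edge of $G_\pi$ iff the two arcs it cuts off have label-sets that are (cyclic) intervals in $\mathbb Z/k$. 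This is the key reformulation: edges of $G_\pi$ are in bijection with the ways to split the labelled $k$-gon into two cyclic-interval arcs, and this description is manifestly symmetric under both relabelling $i\mapsto i+1$ (which is $r$ on labels) and $i\mapsto k+1-i$ together with reversing the reading direction (which is $s$).

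Next, for the rotation $r$: applying $r$ to $\pi$ rotates the cyclic sequence of vertex-labels so that $k$ moves to its new home and increments every label by $1$ modulo $k$. An arc with label-set a cyclic interval $\{i,i+1,\ldots,j-1\}$ is carried to an arc with label-set $\{i+1,\ldots,j\}$, again a cyclic interval, and the polygon vertices bounding it are exactly the images under $r$ of the old ones. Hence $uv\in E(G_\pi)\iff r(u)r(v)\in E(G_{r(\pi)})$, i.e.\ $G_{r(\pi)}=r(G_\pi)$. For the symmetry $s$: applying $s$ replaces each label $i$ by $k+1-i$ and reverses the reading direction; reversing the direction does not change which arcs are cyclic-interval arcs (it just swaps the two arcs of a splitting), and $i\mapsto k+1-i$ sends a cyclic interval to a cyclic interval. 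So the same argument gives $G_{s(\pi)}=s(G_\pi)$.

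The main obstacle is the bookkeeping in the two generator computations: one must be careful that the ``straight'' intervals $\{i,\ldots,j-1\}\subseteq[k-1]$ appearing in the definition of $G_\pi$ really do correspond, in the cyclic picture, to \emph{all} cyclic intervals (including those that wrap around through $k$), and conversely that every cyclic-interval arc of the labelled $k$-gon yields an edge of $G_\pi$ in the sense of the definition — in particular the polygon edges and the ``wrap-around'' blocks must be handled, which is exactly where the appended vertex $k$ and property (iv) of Proposition~\ref{P:propertiesofG} (that crossing chords force a $K_4$, so the edge set is consistent with a planar dissection) come into play. Once this dictionary is pinned down, the invariance of ``cyclic interval'' under $r$ and under $s$ is immediate, and the equivariance on all of $D_k$ follows by the generator argument above.
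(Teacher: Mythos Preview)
Your plan is essentially the paper's: reduce to the generators $r$ and $s$ and verify $G_{r(\pi)}=r(G_\pi)$, $G_{s(\pi)}=s(G_\pi)$ directly from the cyclic description of the action given in the proof of Proposition~\ref{P:DejstvoD-knaS}, with the paper treating the wrap-around case (the edge $1j$, coming from $(j-1)k\in E(G_\pi)$) as a separate explicit case rather than packaging it in your cyclic-interval language. One minor correction: property~(iv) of Proposition~\ref{P:propertiesofG} plays no role here --- the equivariance is a purely combinatorial statement about how $r$ and $s$ permute intervals of $[k]$ and contiguous blocks of the word $\pi$, and the planar-dissection structure of $G_\pi$ is irrelevant, so you should drop that justification.
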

\begin{proof} It suffices to verify the identities
$r(G_\pi)=G_{r(\pi)}\textrm{ and }s(G_\pi)=G_{s(\pi)}$, for every
$ \pi\in\mathbb S_{k-1}.$ Let $1<i<j\leq k$. Then we have have the
following chain of equivalences:
$$ij\in E(r(G_\pi)) \Leftrightarrow (i-1)(j-1)\in E(G_\pi)
\Leftrightarrow\{i-1,\ldots,j-2\} \textrm{ are consecutive in }
\pi.$$ From the definition of $r(\pi)$, it follows that
$$\{i-1,\ldots,j-2\} \textrm{ are consecutive in }
\pi\Leftrightarrow \{i,\ldots,j-1\} \textrm{ are consecutive in }
r(\pi),$$ which is equivalent with $ij\in G_{r(\pi)}$. Next,
consider $1j\in E(r(G_\pi)$. Then:
$$1j\in E(r(G_\pi)) \Leftrightarrow (j-1)k\in E(G_\pi)
\Leftrightarrow\{j-1,\ldots,k-1\} \textrm{ are consecutive in }
\pi.$$ Using the geometric interpretation of $r$ from the proof of
Proposition~\ref{P:DejstvoD-knaS}, this is equivalent to
$$\{j-1,\ldots,k-1\} \textrm{ are consecutive in }
\pi\Leftrightarrow \{1,2,\ldots,j-1\} \textrm{ are consecutive in
} r(\pi).$$ Therefore, $1j\in r(G_\pi)\Leftrightarrow 1j\in
G_{r(\pi)}$. The identity $s(G_\pi)=G_{s(\pi)}$ follows by a
similar argument, and we omit the proof.
\end{proof}
We now describe the permutations in $\mathbb{S}_{k-1}$ that are
invariant under the action of the generators $r$ and $s$ of the
dihedral group $D_k$. Suppose that $\pi\in \mathbb{S}_{k-1}$
satisfies $\pi_a=k-1$ for some $a\in [k-1]$. From the geometric
interpretation of the action of $r$ on $\mathbb{S}_{k-1}$ (see
Proposition~\ref{P:DejstvoD-knaS}) it follows that if
$r(\pi)=\pi$, then necessarily $\pi_{2a}=k-2$. Continuing in this
fashion, we find that $r(\pi)=\pi$ if and only if the set
$\{a,2a,\ldots,(k-1)a,ka\}$ modulo $k$ forms a complete residue
system modulo $k$. This condition is equivalent to $gcd(a,k)=1$,
and the number of $r$-invariant permutation in $\mathbb{S}_{k-1}$
is given by Euler's totient function: $|\{\pi\in
\mathbb{S}_{k-1}:r(\pi)=\pi\}|=\varphi(k)$. The next Remark
describes all $r$-invariant permutations in $\mathbb{S}_{k-1}$.

\begin{rem}\label{R:Blocks}
\label{P:blocksinr-invariant} Let $m<k$ be an integer relatively
prime to $k$. Assume that $k=mt+s$ for some integers $t \geq 0$
and $0<s<m$. Define a permutation $\pi \in \mathbb{S}_{k-1}$ as
follows: set $\pi_m=1$, and $\pi_{im}=i$ for all $i = 2, \ldots, k
- 1$, where the indices $im$ are taken modulo $k$. Note that $\pi$
is an $r$-invariant permutation, and in particular we have
$\pi_{k-m}=k-1$, and $\pi_s=k-t$.

Furthermore, $\pi$ has exactly $m$ almost equicardinal blocks.
Namely, $s-1$ of these blocks (those starting with
$\pi_1,\ldots,\pi_{s-1}$) are of length $t+1$, and the remain
$m-s+1$ of them is of length $t$. Since the block $B_1$ (starting
with $1$) has $t$ elements, we conclude that $\pi_{m-s}=t+1$
\end{rem}

 Clearly, all
$r$-invariant permutations have the same graph $G_\pi\cong C_k$.
However, the converse is not true. For instance, in
$\mathbb{S}_6$, there are $46$ simple permutations whose
associated graph is $C_7$, but only $4$ of them ($362514$,
$246135$, $ 531642$ and $415263$) are fixed by $r$.

Note that the $r$-invariant permutations in $\mathbb{S}_{k-1}$ are
precisely those whose vectors of consecutiveness attain the
extremal bounds described in (\ref{E:boundsforcs}).

\begin{thm}\label{T:rinv}Let $m\leq\frac{k-1}{2}$
 be an integer such that $gcd(m,k)=1$, and
let $\pi\in \mathbb{S}_{k-1}$ be an $r$-invariant permutation
satisfying $\pi_m=1$. Then the vector of consecutiveness of $\pi$
is given by \begin{equation}\label{E:extremal} cs_i(\pi)=k\textrm{
for all }i\in[m-1],\textrm{ and } cs_m(\pi)= {k\choose 2}-(m-1)k
\end{equation}.
\end{thm}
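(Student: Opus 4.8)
The plan is to exploit the two descriptions of an $r$-invariant permutation $\pi$ already available: the multiplicative description $\pi_{im}=i$ (indices mod $k$) from the paragraph preceding Remark \ref{R:Blocks}, and the block structure spelled out in Remark \ref{R:Blocks} itself. The key object is, for each $j\in[k]$, the ``window of length $j$'' function: I want to understand, for a contiguous subsequence $T_{a,b}=\{\pi_a,\pi_{a+1},\ldots,\pi_{b-1}\}$ of $\pi$ with $b-a=j$, into how many maximal runs of consecutive integers the set $T_{a,b}$ decomposes. Since $\pi$ is $r$-invariant and $G_\pi\cong C_k$ is a simple permutation, I expect every window of length $j\le m$ to be ``as spread out as possible'': a window that small can never contain a nontrivial block of $\pi$ (those blocks have length $t\ge\lceil k/m\rceil\ge 3$ when $m\le (k-1)/2$... actually I should be careful and instead argue directly), so consecutive integers in $T_{a,b}$ can only come one at a time. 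I would first establish:

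\emph{Step 1 (windows of length $<m$).} For $1\le j\le m-1$ and any contiguous window $T_{a,b}$ of $\pi$ with $b-a=j$, the set $T_{a,b}$ decomposes into exactly $j$ singletons, i.e. no two elements of $T_{a,b}$ are consecutive integers. The cleanest route: if $\pi_x$ and $\pi_x+1=\pi_y$ with $|x-y|<m$, use $r$-invariance to derive a contradiction with $\gcd(m,k)=1$ — concretely, the positions of $i$ and $i+1$ in an $r$-invariant permutation differ by exactly $m$ (mod $k$) because $\pi_{im}=i$ forces $\mathrm{pos}(i)\equiv m\cdot i^{-1}\cdot(\text{something})$; more simply, $\mathrm{pos}(i)-\mathrm{pos}(i+1)$ is a fixed nonzero residue mod $k$, and one checks it is $\pm m$. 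Hence two consecutive values are never within $m-1$ positions of each other, so every length-$j$ window ($j\le m-1$) is an antichain of consecutive integers, contributing $cs_j$ a count of $k$ windows (there are exactly $k$ contiguous windows of each fixed length $1\le j\le k-1$, reading $\pi_1\ldots\pi_{k-1}k$ cyclically — this matches the convention $a<b\le k$ and the value-$k$ slot), and $cs_i(\pi)=0$ receives no contribution from these for $i<j$. Summing over $j=1,\ldots,i$ then gives exactly the lower bound $ik$ in (\ref{E:boundsforcs}) with equality, i.e. $cs_1(\pi)=\cdots=cs_{m-1}(\pi)=k$.

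\emph{Step 2 (windows of length $\ge m$).} For $j\ge m$ I claim every window $T_{a,b}$ decomposes into exactly $m$ runs of consecutive integers. This is where the block structure of Remark \ref{R:Blocks} and $r$-invariance combine: because the positions of $1,2,\ldots,k$ are equidistributed with common gap $m$ around the cycle, any $j\ge m$ consecutive positions hit each residue class mod $m$ of the ``value index'' in a balanced way, and the values landing in the window, when sorted, form exactly $m$ arithmetic-progression-like runs — more precisely $m$ maximal runs of consecutive integers, independent of $j$ and of the starting position, by the same mod-$k$ counting argument as Step 1. Then $cs_i(\pi)=0$ for $i\notin\{1,\ldots,m\}$ (which also reproves the vanishing claim in the Remark), and $cs_m(\pi)$ counts all windows not already counted in Step 1: by property (2) of the Remark on $cs$, $\sum_i cs_i(\pi)=\binom k2$, so $cs_m(\pi)=\binom k2-(m-1)k$, which is (\ref{E:extremal}).

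\emph{Main obstacle.} The delicate point is Step 2 — proving that \emph{every} window of length $\ge m$ decomposes into exactly $m$ runs, uniformly. The inequality $cs_1+\cdots+cs_{m-1}\ge(m-1)k$ from (\ref{E:boundsforcs}) plus Step 1 already pins down $cs_1,\ldots,cs_{m-1}$, and property (2) pins down $\sum_{i\ge m}cs_i$, but to conclude $cs_m=\binom k2-(m-1)k$ I still must rule out $cs_i\ne 0$ for $m<i\le\lceil(k-1)/2\rceil$. I would handle this by the explicit position formula $\mathrm{pos}(i)\equiv c\cdot m' \pmod k$ (with $mm'\equiv 1$): a window of $j$ consecutive positions corresponds, under $c\mapsto\mathrm{pos}$, to the preimage of an interval, which is a union of $m$ arithmetic progressions with common difference $m'$ in the value-variable $i$; counting maximal runs of consecutive integers among those values is then a finite modular computation showing the answer is exactly $\min(j,m)$ runs (capped at $m$ because there are only $m$ progressions). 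I expect this to reduce, after unwinding, to the elementary fact that shifting an interval of length $j\ge m$ around $\mathbb Z/k$ and intersecting with an arithmetic progression of step $m'$ always yields a set whose sorted form has exactly $m$ consecutive-integer runs. Once that lemma is in hand, both Step 1 and Step 2 are immediate, and the theorem follows; I would also remark that equality in both bounds of (\ref{E:boundsforcs}) simultaneously characterizes $r$-invariance, as asserted just before the theorem statement.
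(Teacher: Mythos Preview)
Your Step 2 is false as stated, and this is the genuine gap. You claim that every window $T_{a,b}$ of length $j\ge m$ decomposes into exactly $m$ runs (equivalently, your proposed lemma gives $\min(j,m)$ runs). But $T_{1,k}=\{\pi_1,\ldots,\pi_{k-1}\}=[k-1]$ is a single run, not $m$ runs. More generally, for $j>k-m$ a window of length $j$ decomposes into $k-j<m$ runs: take $k=7$, $m=3$, $\pi=531642$; then $T_{1,6}=\{1,3,4,5,6\}$ has $2$ runs and $T_{1,7}=\{1,\ldots,6\}$ has $1$ run. The reason your heuristic fails is that once $j>k-m$ the window contains some of the $m$ blocks of $\pi$ \emph{entirely}, and adjacent blocks merge into a single run of consecutive integers. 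Your ``capped at $m$ because there are only $m$ progressions'' intuition is correct for $m\le j\le k-m$ but breaks above that.

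This error is entangled with the miscount in Step 1. There are $k-j$ (not $k$) windows $T_{a,b}$ of length $j$; your cyclic reading of $\pi_1\ldots\pi_{k-1}k$ does not match the $T_{a,b}$ convention, since cyclic windows through the $k$-slot are not of the form $T_{a,b}$ and their run-count in $[k]$ does not agree with any $T_{a,b}$. So Step 1 alone only gives $cs_i(\pi)\ge k-i$ from the $k-i$ windows of length $i$. The missing contribution of $i$ to bring $cs_i$ up to $k$ comes precisely from the $i$ windows of length $k-i$, each of which has $i$ runs --- exactly the long windows your Step 2 misclassifies. The paper's proof makes this explicit by splitting windows into three length ranges: lengths $i<m$ (each with $i$ singleton runs), lengths $k-i$ for $i<m$ (each with $i$ runs), and lengths $m\le j\le k-m$ (each with $m$ runs). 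Once you replace $\min(j,m)$ by $\min(j,\,k-j,\,m)$ in your modular lemma and carry the third case through, the rest of your outline (including the final subtraction using $\sum_i cs_i(\pi)=\binom{k}{2}$) goes through.
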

\begin{proof} An $r$-invariant permutation $\pi\in
\mathbb{S}_{k-1}$ is uniquely determined by the position of $1$,
see Remark \ref{R:Blocks}. The permutations $\pi$ and
$\pi^{\mathrm{op}}$ have the same $cs$-vector, so we may assume
that $m\leq\frac{k-1}{2}$.

For each $i<m$, every set of $i$ consecutive entries in $\pi$
(there are exactly $k-i$ such sets) decomposes into a disjoint
union of $i$ intervals (singletons) of consecutive integers.
Similarly, for each such $i$, there are exactly $i$ sets of length
$k-i$, and each one decomposes into $i$ intervals of consecutive
integers.

 Now, consider any set of $j$
consecutive entries in $\pi$ with $k-m\geq j\geq m$. There are
exactly $k-j$ of such sets, and each one
 decomposes into a disjoint union of exactly $m$ of consecutive
integers (entries from the same block of $\pi$ are adjacent).
\end{proof}
Assume that the vector of consecutiveness of a permutation $\pi\in
\mathbb{S}_{k-1}$ satisfies $(\ref{E:boundsforcs})$ with
equalities. Then, by the argument presented in the proof of the
previous theorem, it also satisfies (\ref{E:extremal}) for some
$m$. We conclude that, in this case, $\pi$ is $r$-invariant and
$\pi_m=1$.

To characterize $s$-invariant permutations,
 suppose $\pi_t=1$. From the definition of $s$,
we deduce that $s(\pi)=\pi$ if and only if
$$\pi_i=k+1-\pi_{t-i}\textrm{ for all }i\in [t-1 ]\textrm{, and }$$
$$\pi_j=k+1-\pi_{k+t-j}\textrm{ for all }j=t+1,\ldots,k-1.$$
In the case when $k=2m$, the position of elements
$\{2,3,\ldots,m-1\}$ in $\pi$ determine the positions for
$\{m,m+1,\ldots,2m-1\}$. If $k$ is even, then $s(\pi)=\pi$ implies
that $t=\pi^{-1}(1)$ is odd. A simple counting of all admissible
arrangement gives us that the number of $s$-invariant permutations
in $\mathbb{S}_{2m-1}$ is $m!2^{m-1}$. A similar reasoning for odd
$k=2m-1$ shows that the number of $s$-invariant permutation in
$\mathbb{S}_{2m-2}$ is $(m-1)!2^{m-1}$.

 Now we describe how the action of $D_k$ on $[k]$
(and on its $m$-element subsets) extends to an action on the set
of sorted families of subsets of $[k]$. Given a sorted family
$(S_1,S_2,\ldots,S_t)$, an element $d\in D_k$ acts by mapping each
set $S_i$ to $d(S_i)$, producing the collection
$\{d(S_1),\ldots,d(S_t)\}$. The following lemma determines the
order in which these transformed sets appear in the resulting
sorted family.
\begin{lem} Let $(A,B)$ be a
sorted pair of subsets of $[k]$. Then
\begin{itemize}
    \item[(i)] If $k\in B\setminus A$, then $(r(B),r(A))$ is sorted;
    otherwise $(r(A),r(B))$ is sorted.
    \item[(ii)] If $1\in A\setminus B$, then $(s(A),s(B))$ is
sorted; otherwise, $(s(B),s(A))$ is sorted.
\end{itemize}
\end{lem}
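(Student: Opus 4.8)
The plan is to unwind the definition of a sorted pair together with the definitions of the generators $r$ and $s$, and to track how applying $r$ or $s$ permutes the ground set $[k]$ and hence reorders the two sets. Write $A=\{a_1<a_2<\cdots<a_m\}$ and $B=\{b_1<b_2<\cdots<b_m\}$, so that $(A,B)$ sorted means $a_1\le b_1\le a_2\le b_2\le\cdots\le a_m\le b_m$. Since both are $m$-element subsets and the chain of inequalities is tight, at each index either $a_i<b_i$ or $a_i=b_i$, and similarly $b_i$ versus $a_{i+1}$; the edge case $|A\triangle B|\le 2$ governs the exceptional clauses, and one should note that $(A,B)$ sorted forces $a_1\le b_1$ and $b_m\ge a_m$, so the ``which of $A,B$ comes first'' question is really about whether the smallest element of $A\cup B$ lies in $A\setminus B$ and whether the largest lies in $B\setminus A$.

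For part (i), recall $r(i)=i+1\pmod k$, so $r$ cyclically shifts labels, sending $k\mapsto 1$ and every $i<k$ to $i+1$. If $k\notin A\cup B$, then $r$ is order-preserving on $A\cup B$, so $r(A)=\{a_1+1<\cdots<a_m+1\}$ and likewise for $B$, and the chain of inequalities is preserved verbatim: $(r(A),r(B))$ is sorted. If $k\in A\cap B$ (so $a_m=b_m=k$), then $r(a_m)=r(b_m)=1$ becomes the new \emph{smallest} element of both images; one checks directly that writing the elements in increasing order turns the old chain $a_1\le b_1\le\cdots\le a_{m-1}\le b_{m-1}\le k=k$ into $1\le 1\le (a_1+1)\le(b_1+1)\le\cdots$, which is again $(r(A),r(B))$ sorted. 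The only remaining possibility is $k\in B\setminus A$ (the case $k\in A\setminus B$ is excluded because $(A,B)$ sorted forces $b_m\ge a_m$, so $a_m=k$ would give $b_m=k$ too): then $b_m=k$, $r(b_m)=1$ is the smallest element of $r(B)$, while $r(A)=\{a_1+1<\cdots<a_m+1\}$ with all entries $\ge 2$; the chain $a_1\le b_1\le a_2\le\cdots\le a_m\le b_m=k$ becomes, after relabeling and reordering, $1=r(b_m)\le r(a_1)\le r(b_1)\le\cdots\le r(a_m)$, i.e.\ exactly the statement that $(r(B),r(A))$ is sorted. So the dichotomy is precisely ``$k\in B\setminus A$.''

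For part (ii), $s(i)=k+2-i\pmod k$ is the order-\emph{reversing} reflection fixing the residue class of $k+1$. Here the strategy is the same but mirror-imaged: $s$ reverses the natural order, so after applying $s$ the element $1$ (if present) maps to $s(1)=k+1\equiv k+1$, which modulo $k$ is the residue that becomes the largest, while the largest element maps near the bottom. Concretely, $s$ sends $\{1,2,\ldots,k\}$ to $\{k+1,k,\ldots,2\}$ read mod $k$, i.e.\ $1\mapsto k+1\equiv$ (top), $2\mapsto k$, \ldots, $k\mapsto 2$. One checks: if $1\notin A\cup B$, then $s$ restricted to $A\cup B$ is strictly order-reversing, so $s(A)$ listed in increasing order is $s(a_m)<s(a_{m-1})<\cdots<s(a_1)$, and the sorted chain for $(A,B)$ gets reversed into the sorted chain for $(s(B),s(A))$; if $1\in A\cap B$ then $s(1)$ is the top element of both, and one verifies the reversed chain still reads as $(s(B),s(A))$ sorted; and if $1\in A\setminus B$ (the symmetric exclusion: $(A,B)$ sorted forces $a_1\le b_1$, so $1\in B\setminus A$ is impossible), then $s(a_1)$ becomes the unique largest element, which tips the order so that $(s(A),s(B))$ is the sorted one. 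Thus the dichotomy is exactly ``$1\in A\setminus B$,'' as claimed.

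\textbf{Main obstacle.} The genuine calculations are routine, but the one point requiring care is the wrap-around: when $k\in A\cup B$ (resp.\ $1\in A\cup B$) the map $r$ (resp.\ $s$) is no longer globally monotone on $A\cup B$, so one must re-sort the image and check that the single displaced element lands at the correct extreme position and that this is consistent with the parity of ``which set comes first.'' The clean way to handle this uniformly is to observe that $(A,B)$ is sorted iff the word obtained by reading $A\cup B$ in cyclic order starting just after $k$, and labeling each element $A$ or $B$ (with ties broken $A$ before $B$), is ``balanced'' in an appropriate sense; then $r$ just rotates the starting point of this cyclic reading by one, and $s$ reflects it, and the exceptional clauses record when the rotation/reflection moves the designated start past an unmatched extreme element. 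I would present the argument via this cyclic-word viewpoint to avoid a four-fold case split, but a direct verification of the three cases above is equally valid and self-contained.
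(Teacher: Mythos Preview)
Your case split on whether the extreme element ($k$ for $r$, $1$ for $s$) lies in $B\setminus A$, in $A\cap B$, or in neither is exactly the paper's approach, and your treatment of part~(i) is correct and in fact more detailed than the paper's.

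In part~(ii), however, you have miscomputed $s(1)$. Since $s(i)=k+2-i$ with arithmetic taken modulo $k$ and representatives in $[k]$, one has $s(1)=k+1\equiv 1$: the reflection $s$ \emph{fixes} the label $1$ and reverses the linear order on $\{2,\ldots,k\}$. Hence in the case $a_1=1\in A\setminus B$, the element $s(a_1)=1$ is the unique \emph{smallest} member of $s(A)\cup s(B)$, not the largest. Your stated justification (``$s(a_1)$ becomes the unique largest element, which tips the order so that $(s(A),s(B))$ is the sorted one'') is therefore internally inconsistent: had $s(a_1)$ really been the largest element and belonged to $s(A)$, sortedness would force $s(A)$ to be the \emph{second} set of the pair, contrary to the conclusion you state. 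The correct argument (this is what the paper does) is that $s(a_1)=1$ sits at the bottom, yielding the chain
\[
s(a_1)=1\le s(b_m)\le s(a_m)\le s(b_{m-1})\le\cdots\le s(a_2)\le s(b_1),
\]
which, since the increasing enumeration of $s(A)$ is $1,s(a_m),s(a_{m-1}),\ldots,s(a_2)$, is precisely the sortedness of $(s(A),s(B))$. The same correction applies to your subcase $1\in A\cap B$: there $s(1)=1$ is the common \emph{smallest} element of both images, not the top. Your final conclusions in all three subcases of (ii) are correct, but the supporting reasoning needs this fix before it actually establishes them.
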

\begin{proof}Assume that
$A=\{a_1,a_2,\ldots,a_m\}$ and $B=\{b_1,b_2,\ldots,b_m\}$, where
$1\leq a_1\leq b_1\leq a_2\leq b_2\ldots\leq a_m\leq b_m\leq k$.

\noindent $(i)$ Suppose $k\in B\setminus A$. Then $b_m=k>a_m\geq
b_{m-1}$, and since $r(i)=i+1$ modulo $k$, we have $$r(b_m)\leq
r(a_1)\leq r(b_1)\leq \cdots \leq r(b_{m-1})\leq r(a_m).$$ If
instead $a_m=b_m=k$, or $b_m<k$, a similar reasoning shows that
$(r(A),r(B))$ is sorted. This completes the proof of $(i)$.

\noindent $(ii)$ Suppose that $1\in A\setminus B$, so $a_1=1<b_1$
and $s(a_1)=1<s(b_m)$. More generally, since $s(i)=k+2-i$ for
$i\neq 1$, the inequalities $b_i\leq a_{i+1}\Rightarrow
s(a_{i+1})\leq s(b_i)$ imply
$$s(a_1)\leq s(b_m)\leq s(a_{m-1})\leq s(b_{m-1})\leq
\cdots\leq s(a_2)\leq s(b_1),$$ and the pair $(s(A),s(B))$ is
sorted. It is easy to check that if $1\notin A\setminus B$ then
$(s(B),s(A)$ is sorted.
\end{proof}
As a direct consequence of the previous lemma, we obtain the
following description of the $D_k$-action on sorted families of
subsets of $[k]$.
\begin{prop}\label{dejstvonasort} Let $\mathcal S=(S_1,S_2,\ldots,S_k)$
be a sorted family of subsets of $[k]$.
\begin{itemize}
    \item[(a)] If $k\in (S_{i+1}\cap
    \cdots\cap S_k)\setminus (S_1\cup\cdots\cup
S_i)$ then \[r(\mathcal S)=(r(S_{i+1}),r(S_{i+2}),\ldots,
r(S_k),r(S_1),r(S_2),\ldots,r(S_i))\]
    \item[(b)] If $1\in (S_1\cap\cdots\cap S_j)\setminus
(S_{j+1}\cup\cdots\cup S_k)$ then \[s(\mathcal
S)=(s(S_{j}),s(S_{j-1}),\ldots,
s(S_1),s(S_{k}),s(S_{k-1}),\ldots,s(S_{j+1}))\]
\end{itemize}
\end{prop}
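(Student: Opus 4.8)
The plan is to reduce the claim to the pairwise statement in the preceding lemma. Recall the paper's definition: a family $(T_1,\ldots,T_k)$ of $m$-subsets of $[k]$ is sorted exactly when the pair $(T_a,T_b)$ is sorted for every $a<b$. So I would fix the reordering proposed in (a) (resp.\ (b)) and simply verify this condition for all of its pairs, reading each one off from part (i) (resp.\ part (ii)) of the lemma. A preliminary observation makes the hypotheses transparent: in a sorted family $\mathcal S=(S_1,\ldots,S_k)$, if $k\in S_a$ then $k$ is the maximum of $S_a$, and sortedness of $(S_a,S_b)$ with $a<b$ forces $k\in S_b$; dually $1\in S_b$ forces $1\in S_a$ for all $a<b$. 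Hence there is always a threshold $i$ with $k\in S_{i+1}\cap\cdots\cap S_k$ and $k\notin S_1\cup\cdots\cup S_i$, and a threshold $j$ with $1\in S_1\cap\cdots\cap S_j$ and $1\notin S_{j+1}\cup\cdots\cup S_k$, so the side conditions in (a) and (b) just name these thresholds.

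For (a) I would split the indices into the lower block $L=\{1,\ldots,i\}$, whose sets avoid $k$, and the upper block $U=\{i+1,\ldots,k\}$, whose sets contain $k$, and apply part (i) of the lemma to each pair $(S_a,S_b)$, $a<b$, of $\mathcal S$. If $a,b$ lie in the same block, then $k$ lies in both $S_a,S_b$ or in neither, so $k\notin S_b\setminus S_a$ and the lemma keeps $(r(S_a),r(S_b))$ sorted, i.e.\ preserves the internal order of that block. If $a\le i<b$, then $k\in S_b\setminus S_a$, so $(r(S_b),r(S_a))$ is sorted, i.e.\ the image of the upper index precedes the image of the lower one. Assembling these outcomes, the unique sorted arrangement of $\{r(S_1),\ldots,r(S_k)\}$ lists $r(S_{i+1}),\ldots,r(S_k)$ first and $r(S_1),\ldots,r(S_i)$ afterwards, which is precisely the displayed $r(\mathcal S)$; the degenerate cases $i=0$ and $i=k$ are immediate.

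For (b) the argument is dual. I would split the indices into the prefix block $P=\{1,\ldots,j\}$, whose sets contain $1$, and the suffix block $Q=\{j+1,\ldots,k\}$, whose sets avoid $1$, and apply part (ii). If $a<b$ lie in the same block, then membership of $1$ in $S_a$ is shared by $S_b$, so $1\notin S_a\setminus S_b$ and $(s(S_b),s(S_a))$ is sorted — the internal order of that block is reversed. If $a\le j<b$, then $1\in S_a\setminus S_b$, so $(s(S_a),s(S_b))$ is sorted, keeping the (reversed) image of $P$ before the (reversed) image of $Q$. Collecting these, the sorted arrangement of $\{s(S_1),\ldots,s(S_k)\}$ is $s(S_j),s(S_{j-1}),\ldots,s(S_1),s(S_k),s(S_{k-1}),\ldots,s(S_{j+1})$, as claimed.

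Since $r$ and $s$ generate $D_k$ and the statement pins down the reordering for each generator, this also confirms that the described rule is a well-defined $D_k$-action on sorted families — for any pair of distinct sets $A\neq B$ one has $d(A)\neq d(B)$, so exactly one of $(d(A),d(B))$, $(d(B),d(A))$ is sorted and the output family is unambiguous. The only real work is the bookkeeping of which block each index lands in and matching the resulting sequence to the two displayed formulas; I do not expect a genuine obstacle beyond that.
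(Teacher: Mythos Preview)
Your proposal is correct and follows exactly the approach the paper intends: the paper states the proposition as ``a direct consequence of the previous lemma'' without further argument, and you have carefully spelled out that consequence by applying the lemma to every pair $(S_a,S_b)$ and reading off the resulting order. Your block decomposition into $L,U$ (resp.\ $P,Q$) and the case analysis are the natural way to organize this, and the preliminary observation about the thresholds $i$ and $j$ is a helpful addition.
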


 Let $\mathcal S_\pi$ denote
 the maximal sorted family associated with $\pi\in\mathbb S_{k-1}$,
 as defined in (\ref{sortirana familija}).

\begin{cor}\label{ekvivarijantno}
The map $\pi\mapsto\mathcal S_\pi$ is $D_k$-equivariant.
\end{cor}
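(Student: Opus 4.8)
The plan is to combine the $D_k$-equivariance of the map $\pi\mapsto G_\pi$ (the preceding theorem) with the explicit description of the $D_k$-action on sorted families furnished by Proposition~\ref{dejstvonasort}, and to check compatibility on the two generators $r$ and $s$. Since $D_k$ is generated by $r$ and $s$, it suffices to prove $\mathcal S_{r(\pi)}=r(\mathcal S_\pi)$ and $\mathcal S_{s(\pi)}=s(\mathcal S_\pi)$ for every $\pi\in\mathbb S_{k-1}$, where on the right-hand side $r$ and $s$ act on sorted families as in Proposition~\ref{dejstvonasort}.

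First I would treat the generator $r$. Write $\mathcal S_\pi=(S_1,\ldots,S_k)$ and recall from Remark~\ref{R:rasporedusorted} (the item describing where $k$ lives) that $k\in S_j$ if and only if $\pi^{-1}(k-1)+1\le j\le k$; so with $i=\pi^{-1}(k-1)$ we have $k\in(S_{i+1}\cap\cdots\cap S_k)\setminus(S_1\cup\cdots\cup S_i)$, which is exactly the hypothesis of Proposition~\ref{dejstvonasort}(a). Hence $r(\mathcal S_\pi)=(r(S_{i+1}),\ldots,r(S_k),r(S_1),\ldots,r(S_i))$. On the other hand, $r(\pi)$ is obtained (Definition~\ref{D:actiononS}) by cyclically rotating $\pi$ so that the entry $k-1$ is moved to the correct spot and relabelling via $x\mapsto x+1 \pmod k$; I would verify directly from the recursion (\ref{sortirana familija}) that applying this relabelling-and-rotation to $\pi$ produces precisely the family listed above. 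Concretely: the equality $\{\pi_i\}=S_i\setminus S_{i+1}$ characterising the inverse map, together with the fact that $r$ sends the "difference entry" $\pi_a$ to $\pi_a+1\pmod k$ and cyclically shifts indices by $i=\pi^{-1}(k-1)$, shows that the difference sets of $r(\mathcal S_\pi)$ are exactly the entries of $r(\pi)$ in the right order, so $r(\mathcal S_\pi)$ is the maximal sorted family of $r(\pi)$, i.e. $\mathcal S_{r(\pi)}=r(\mathcal S_\pi)$. The argument for $s$ is entirely parallel, using the item of Remark~\ref{R:rasporedusorted} that locates the element $1$ (so that $1\in(S_1\cap\cdots\cap S_j)\setminus(S_{j+1}\cup\cdots\cup S_k)$ for $j=\pi^{-1}(1)$, matching the hypothesis of Proposition~\ref{dejstvonasort}(b)), together with the definition of $s(\pi)$ as a reversal-and-complement.

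The one subtlety I expect to need care is matching conventions: Definition~\ref{D:actiononS} phrases $r(\pi)$ and $s(\pi)$ in terms of positions relative to the entry $k-1$ (resp. $1$) \emph{inside the permutation word}, whereas Proposition~\ref{dejstvonasort} phrases $r(\mathcal S)$ and $s(\mathcal S)$ in terms of the index $i$ (resp. $j$) where the element $k$ (resp. $1$) of $[k]$ enters or leaves the sorted sets. The bridge between the two is exactly Remark~\ref{R:rasporedusorted}, which converts the position statement about $\pi$ into the membership statement about $\mathcal S_\pi$; so the main obstacle is really just carefully lining up these two indexing schemes and the modular relabellings, after which the corollary follows formally. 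Alternatively, one could avoid the generator-by-generator bookkeeping by invoking the previous theorem ($G_{d(\pi)}=d(G_\pi)$) together with the fact that, in the range $q=|S(\pi)|+2$ where $G^\pi_{k,q}=G_\pi$, the sorted family $\mathcal S_\pi$ is recoverable from the labelled graph $G_\pi$ via the hypersimplex interpretation of Section~3.2; but the direct verification on $r$ and $s$ is cleaner, and that is the route I would take.
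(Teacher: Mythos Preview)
Your proposal is correct and follows essentially the same route as the paper: verify equivariance on the generators $r$ and $s$, use the location of $k-1$ (resp.\ $1$) in $\pi$ to invoke Proposition~\ref{dejstvonasort}, and then identify the resulting sorted family with $\mathcal S_{r(\pi)}$ (resp.\ $\mathcal S_{s(\pi)}$) via the characterisation $\{\pi_i\}=S_i\setminus S_{i+1}$ of the inverse map. The paper carries out the last step by an explicit computation of the successive differences $r(S_{i+1})\setminus r(S_{i+2})=r(\pi_{i+1})=\pi_{i+1}+1$, which is exactly the ``difference sets'' verification you outline; your explicit appeal to Remark~\ref{R:rasporedusorted} to pin down the index where $k$ enters is a clean way to justify the hypothesis of Proposition~\ref{dejstvonasort}(a), which the paper simply asserts.
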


\begin{proof} Assume that $\mathcal
S_\pi=(S_1,S_2,\ldots,S_k)$ is a sorted family defined by the
permutation $\pi=\pi_1\ldots\pi_{t-1} (k-1) \pi_{t+1}
\ldots\pi_{k-1}\in\mathbb S_{k-1}$.

Then
 $k\in (S_{t+1}\cap \ldots\cap S_k)\setminus
  (S_1\cup\ldots\cup S_t)$. By Proposition~\ref{dejstvonasort},
  the image under $r$ is the sorted family
\[r(\mathcal S_\pi)=(r(S_{t+1}),r(S_{t+2}),
\ldots,r(S_k),r(S_1),r(S_2),\ldots,r(S_t)).\] For each
$i\in\{t,t+1,\ldots,k-2\}$, we compute
$$r(S_{i+1})\setminus r(S_{i+2})= r(S_{i+1}\setminus
S_{i+2})=r(\pi_{i+1})= \pi_{i+1}+1.$$ Similarly,
 $r(S_k)\setminus r(S_1)=r(S_k\setminus S_1)=r(k)=1$, and for
 all $i\in [t-1]$, $r(S_i\setminus S_{i+1})=r(S_i\setminus S_{i+1})
 =r(\pi_i)=\pi_{i}+1$.
 Thus $r(\mathcal S_\pi)=\mathcal S_{r(\pi)}$.
 The proof for $s$ follows analogously.

\end{proof}

The following theorem shows that the graphs $G_{k,q}^\pi$ and
$G_{k,q}^\omega $
 are isomorphic whenever $\pi$ and $\omega$ lie in the same orbit under
 the action of $D_k$ on $\mathbb S_{k-1}$.
 Moreover, this isomorphism preserves Sperner labelings:
 it maps any Sperner labeling of
 $G_{k,q}^\pi$ to a Sperner's
 labeling of $G_{k,q}^\omega$, preserving the number of colored vertices.

\begin{thm}\label{T:izopriD_k} For every $\pi\in \mathbb{S}_{k-1}$
and every $d\in D_{k}$, there exists an isomorphism
$f:W^\pi_{k,q}\rightarrow W^{d(\pi)}_{k,q}$ such that
$L^{d(\pi)}(f(\mathbf{x}))=d(L^\pi(\mathbf{x}))$ for every
$\mathbf{x}\in W_{k,q}^\pi$.
\end{thm}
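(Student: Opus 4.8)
The plan is to build the isomorphism $f$ on generators of $D_k$, i.e.\ for $d=r$ and $d=s$, and then extend it to all of $D_k$ by composition. Recall from Theorem~\ref{T:triangulationofHip} and the preceding discussion that the vertex set $W^\pi_{k,q}$ is exactly the set of integer points of the simplex $R^\pi_{k,q}=\operatorname{conv}\{\mathbf{w}^{(1)}_\pi,\ldots,\mathbf{w}^{(k)}_\pi\}$, with $\mathbf{w}^{(i)}_\pi$ assigned the color $k+1-i$, and that two such points $\mathbf{x},\mathbf{y}$ are adjacent in $G^\pi_{k,q}$ precisely when $\mathbf{y}=\mathbf{x}+e_{\pi_a}+\cdots+e_{\pi_{b-1}}$ for some $1\le a<b\le k$; equivalently, an edge in $G^\pi_{k,q}$ is a translate $w+e$ of an edge $e$ of $G_\pi$ by some $w\in W_{k,q-2}$, so the combinatorial type of $G^\pi_{k,q}$ is completely governed by the dilated simplex and the graph $G_\pi$. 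This is the crucial reduction: since the map $\pi\mapsto\mathcal S_\pi$ is $D_k$-equivariant (Corollary~\ref{ekvivarijantno}) and $\pi\mapsto G_\pi$ is $D_k$-equivariant (the theorem on $D_k$-equivariance of $G_\pi$), the relabelling of colors induced by $d$ should exactly translate an $i$-monochromatic cell into a $d(i)$-monochromatic one.

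\medskip

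First I would make the color bookkeeping explicit. The $i$-th vertex $\mathbf{w}^{(i)}_\pi$ of $R^\pi_{k,q}$ carries the unique admissible color $k+1-i$, and by the last paragraph of Section~2 the admissible-color set $L^\pi(\mathbf{x})$ of an arbitrary $\mathbf{x}\in W^\pi_{k,q}$ is the set of colors of vertices of the (unique) face of $R^\pi_{k,q}$ containing $\mathbf{x}$ in its relative interior. Thus a $D_k$-equivariant affine bijection of the vertex set of $R^\pi_{k,q}$ onto that of $R^{d(\pi)}_{k,q}$ which sends the vertex colored $i$ to the vertex colored $d(i)$ automatically carries faces to faces and hence satisfies $L^{d(\pi)}(f(\mathbf{x}))=d(L^\pi(\mathbf{x}))$. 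So the real content is: produce, for each generator $d\in\{r,s\}$, an affine lattice isomorphism $f_d:R^\pi_{k,q}\cap\mathbb Z^{k-1}\to R^{d(\pi)}_{k,q}\cap\mathbb Z^{k-1}$ permuting the vertices by $\mathbf{w}^{(i)}_\pi\mapsto\mathbf{w}^{(\,d^{-1}(k+1-i)\,?)}$ — more precisely, matching the vertex colored $i$ to the vertex colored $d(i)$ — and showing $f_d$ respects adjacency in the line graph.

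\medskip

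The adjacency-preservation step is where the equivariance of $\pi\mapsto G_\pi$ does the work. An edge of $G^\pi_{k,q}$ sits over an edge $e$ of a translated copy $w+G_\pi$ ($w\in W_{k,q-2}$); applying $f_d$ should send this to the translate by $f_d(w)$ of the corresponding edge of $G_{d(\pi)}=d(G_\pi)$, which is an edge of $G^{d(\pi)}_{k,q}$. Concretely one checks this on the decomposition $E(G^\pi_{k,q})=\bigsqcup_{1\le a<b\le k}E_{a,b}$ of (\ref{E:edgesss}): the class $E_{a,b}$ corresponds to the edge of $G_\pi$ joining the vertices labeled by the endpoints of the interval $\{\pi_a,\ldots,\pi_{b-1}\}$, and under $r$ (resp.\ $s$) this interval is shifted (resp.\ reflected) exactly as described in Definition~\ref{D:actiononS} and in the geometric $k$-gon picture from the proof of Proposition~\ref{P:DejstvoD-knaS}, matching $E_{a,b}$ for $\pi$ with the appropriate $E_{a',b'}$ for $d(\pi)$. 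The constraints defining which $\mathbf{x}$ admit an edge in $E_{a,b}$ (Lemma~\ref{L:Eab}) are precisely the face-membership conditions for $R^\pi_{k,q}$, so they transform correctly under the affine map $f_d$.

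\medskip

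The main obstacle I anticipate is writing down $f_r$ and $f_s$ in coordinates cleanly. The vertices of $R^\pi_{k,q}$ were given by an explicit but awkward formula in terms of $S(\pi)=\{a_1,\ldots,a_{m-1}\}$, and the rotation $r$ mixes which coordinate index gets the ``large'' offset $(q-1-|S(\pi)|)$ — the bookkeeping of how $S(\pi)$ changes to $S(r(\pi))$ under the relabelling $i\mapsto i+1\pmod k$ of the polygon, and correspondingly how the step vectors $e_{\pi_a}+\cdots+e_{\pi_{b-1}}$ transform, has to be handled carefully. I would sidestep much of this by working invariantly: define $f_d$ abstractly as ``the unique affine map carrying the ordered vertex set of $R^\pi_{k,q}$ to the ordered vertex set of $R^{d(\pi)}_{k,q}$ according to the color-matching permutation,'' observe it is a lattice isomorphism because both simplices are unimodularly equivalent to the standard dilated simplex (both being cells of alcoved/edgewise subdivisions, hence all their maximal cells unimodular), and then verify adjacency and the color-list identity using the equivariance statements already proved rather than by direct computation. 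For $d=s$ the argument is the mirror image of the $d=r$ case, as in the proof of the $G_\pi$-equivariance theorem, so it may be stated briefly.
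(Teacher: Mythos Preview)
Your plan contains a genuine gap in the edge-preservation step. You assert that ``an edge in $G^\pi_{k,q}$ is a translate $w+e$ of an edge $e$ of $G_\pi$ by some $w\in W_{k,q-2}$,'' and later that ``the class $E_{a,b}$ corresponds to the edge of $G_\pi$ joining the vertices labeled by the endpoints of the interval $\{\pi_a,\ldots,\pi_{b-1}\}$.'' Both statements are false when $\{\pi_a,\ldots,\pi_{b-1}\}$ is \emph{not} an interval of integers: the paper's own Remark following the definition of $G_\pi$ says explicitly that for $\pi\notin\{\mathrm{Id},\mathrm{Id}^{\mathrm{op}}\}$ the graph $G^\pi_{k,q}$ has extra edges beyond $\bigcup_w(w+G_\pi)$. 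For instance, with $k=4$, $\pi=213$, $q=4$, the edge $(0,2,2)\,(1,2,3)\in E_{2,4}$ has direction $e_1+e_3$, which is not a $G_\pi$-edge direction. Consequently the $D_k$-equivariance of $\pi\mapsto G_\pi$ cannot by itself certify that your affine map $f_d$ carries every $E_{a,b}$ to some $E_{a',b'}$; you would need a separate linear-algebra check that the linear part of $f_d$ sends each direction $e_{\pi_a}+\cdots+e_{\pi_{b-1}}$ to $\pm(e_{d(\pi)_{a'}}+\cdots+e_{d(\pi)_{b'-1}})$, and your proposal explicitly tries to avoid this computation.

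The paper circumvents the difficulty by passing to different coordinates. It defines $\varphi_\pi(\mathbf{x})=(x_1,x_2-x_1,\ldots,q-x_{k-1})-e_{S_k}$, a bijection from $W^\pi_{k,q}$ onto the set $V_{k,q-|S(\pi)|-1}$ of weak compositions, and computes that \emph{every} edge direction $e_{\pi_a}+\cdots+e_{\pi_{b-1}}$ becomes $e_{S_a}-e_{S_b}$, where $(S_1,\ldots,S_k)=\mathcal S_\pi$ is the sorted family. In these coordinates the $D_k$-action is the coordinate permutation $A_d(e_i)=e_{d(i)}$, and Corollary~\ref{ekvivarijantno} (equivariance of $\pi\mapsto\mathcal S_\pi$, which you cite but never actually use) gives $A_d(e_{S_a}-e_{S_b})=e_{d(S_a)}-e_{d(S_b)}$, an edge direction for $d(\pi)$. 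The point is that the sorted family $\mathcal S_\pi$ encodes \emph{all} pairs $(a,b)$, not just those giving $G_\pi$-edges, so its equivariance is exactly the right tool; the equivariance of $G_\pi$ is strictly weaker. Your abstractly defined affine map does coincide with the paper's $f=\varphi_{d(\pi)}^{-1}\circ A_d\circ\varphi_\pi$, so the map is correct---but the justification you propose does not cover all edges.
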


\begin{proof}

 Let $\pi\in\mathbb S_{k-1}$, and let $S(\pi)$ denote the
 set of adjacent inversions. Let $\mathcal{S}_\pi=(S_1,S_2,\ldots,S_k)$
 be the sorted family associated with $\pi$.

  For this permutation $\pi$, define the map
  $\varphi_\pi:W_{k,q}^\pi\to \mathbb Z^k$ by
\[\varphi_\pi(\mathbf{x})=(x_1,x_2-x_1,\ldots,x_{k-1}-x_{k-2},q-x_{k-1})-
e_{S_k}.\]Since
 $x_i-x_{i-1}>0$ for all $i-1\in S(\pi)$, and $x_{k-1}<q$, the map
 $\varphi_\pi$ is well-defined.
 This map is a bijection between $W_{k,q}^\pi$
  and $V_{k,q-|S(\pi)|-1}=
  \{\mathbf{a}\in\mathbb Z^k_+:a_1+\cdots+a_k=q-|S(\pi)|-1\}$
  i.e., the set of weak compositions of
  $q - |S(\pi)| - 1$ into $k$ parts.

Recall that $\mathbf{x}\mathbf{y}$ is an edge of $G_{k,q}^\pi$ if
and only if
 $\mathbf{y}=\mathbf{x}+e_{\pi_a}+e_{\pi_{a+1}}+\cdots+e_{\pi_{b-1}}
 \ (1\leq a<b\leq k)$. This is equivalent with
\begin{multline*}
\varphi_\pi(\mathbf{y})=\varphi_\pi(\mathbf{x})+(e_{\pi_a}-e_{\pi_a+1})+
(e_{\pi_{a+1}}-e_{\pi_{a+1}+1})+\cdots+(e_{\pi_{b-1}}-e_{\pi_{b-1}+1})=\\
=\varphi_\pi(\mathbf{x})+e_{S_a}-e_{S_{a+1}}+e_{S_{a+1}}-
e_{S_{a+2}}+\cdots+e_{S_{b-1}}-e_{S_b}=\varphi_\pi(\mathbf{x})+e_{S_a}-e_{S_b}.
\end{multline*}
Define the auxiliary graph $\Gamma_{k,q}^\pi$ with vertex set
$V_{k,q-|S(\pi)|-1}$, where two vertices $\mathbf{a}, \mathbf{b}$
are adjacent if there exist $i, j \in [k]$ such that $\mathbf{a} +
e_{S_i} = \mathbf{b} + e_{S_j}$. Then $\varphi_\pi$ is an
isomorphism between $G_{k,q}^\pi$ and $\Gamma_{k,q}^\pi$.

Similarly, we define $\varphi_{d(\pi)}$ and construct
$\Gamma_{k,q}^{d(\pi)}\cong G^{d(\pi)}_{k,q}$ in the same way.
  It remains to show that the graphs $\Gamma_{k,q}^\pi$ and
  $\Gamma_{k,q}^{d(\pi)}$ are isomorphic.

From Proposition \ref{dejstvonasort} and Corollary
\ref{ekvivarijantno}, we know that
 $|S(\pi)|=|S(d(\pi))|$ and that
$d(S_1),d(S_2),\ldots,d(S_k)$ are elements of the sorted family
that is associated with $d(\pi)$. Hence
$V(\Gamma_{k,q}^{d(\pi)})=V(\Gamma_{k,q}^\pi)=V_{k,q-|S(\pi)|-1}$,
i.e., both graphs have the same vertex set.

\noindent Now consider the action of $D_k$ on $\mathbb{R}^k$ via
the linear map $A_d: \mathbb{R}^k \to \mathbb{R}^k$ defined by
$A_d(e_i) = e_{d(i)}$. This map preserves the set $V_{k,q -
|S(\pi)| - 1}$ and induces an isomorphism between
$\Gamma_{k,q}^\pi$ and $\Gamma_{k,q}^{d(\pi)}$. Indeed, if
$\mathbf{a}, \mathbf{b} \in V_{k,q - |S(\pi)| - 1}$ and
$\mathbf{a} + e_{S_i} = \mathbf{b} + e_{S_j}$, then: $
A_d(\mathbf{a}) + e_{d(S_i)} = A_d(\mathbf{b}) + e_{d(S_j)}$.
Hence, the map $ f := \varphi_{d(\pi)}^{-1} \circ A_d \circ
\varphi_\pi $ is an isomorphism between $G_{k,q}^\pi$ and
$G_{k,q}^{d(\pi)}$.

To verify compatibility with Sperner labelings, note that
$\mathbf{x} \in W_{k,q}^\pi$ can be labeled with $i$ if and only
if the $i$-th coordinate of $\varphi_\pi(\mathbf{x})$ is positive,
i.e., if $(\varphi_\pi(\mathbf{x}))_i>0$. Since
$A_d(\varphi_\pi(\mathbf{x})) = \varphi_{d(\pi)}(f(\mathbf{x}))$,
we obtain that
$(\varphi_\pi(\mathbf{x}))_i=(A_d(\varphi_\pi(\mathbf{x})))_{d(i)}$,
which implies $L^{d(\pi)}(f(\mathbf{x}))=d(L^\pi(\mathbf{x}))$.

\end{proof}

\section{Distance colorings of $G^\pi_{k,q}$}
In this section, we study Sperner labelings of the graph
$G^\pi_{k,q}$ that use multiple colors. By the equivalence
established in Section 2, each Sperner labeling of $G^\pi_{k,q}$
naturally induces a corresponding Sperner labeling of the
hypergraph $H^\pi_{k,q}$. We demonstrate that, for most
permutations $\pi$, our labeling produces more monochromatic
hyperedges of $H^\pi_{k,q}$ than the greedy coloring. Moreover,
our labeling is optimal for certain permutations.
\subsection{Definition of distance coloring}
For a fixed permutation $\pi=\pi_1 \pi_2\ldots\pi_{k-1}\in
\mathbb{S}_{k-1}$, we encode its set of adjacent inversions
$S(\pi)$ by
 $\varepsilon(\pi)=(\varepsilon_1,\varepsilon_2,\ldots,\varepsilon_{k})$,
 where
 $$\varepsilon_1=\varepsilon_k=0,\textrm{ and }\varepsilon_i=\left\{%
\begin{array}{ll}
    0, & \hbox{if $i-1 \notin S(\pi)$;} \\
   1, & \hbox{if $i-1 \in S(\pi)$.} \\
\end{array}%
\right    .$$ It follows from (\ref{listaboja}) that the hyperedge
$F(\mathbf{x},\pi)$ of $H^\pi_{k,q}$ can be $i$-monochromatic if
and only if $x_{i-1}<x_i-\varepsilon_{i} \,\,(\textrm{assuming }
x_{0}=0 \textrm{ and }x_k=q-1)$.

 Assume that
$C=\{a_1,a_2,\ldots,a_{t}\} \subset [k]$ is a set of independent
vertices in the graph $G_\pi$. We can color the vertices of
$G_\pi\cong G^\pi_{k,|S(\pi)|+2}$ that belong to $C$
  with the corresponding color, while the remaining
  vertices of $G_\pi$ are colored
by $0$, as we describe in (\ref{uslov1}). Clearly, if $C$ is a
maximal independent set in $G_\pi$, then this coloring maximizes
the number of monochromatic hyperedges of $H^\pi_{k,|S(\pi)|+2}$.
 Motivated by the above example, we fix an independent set of vertices
$C=\{a_1,a_2,\ldots,a_t\}\subset [k]$ in $G_\pi$, and partition
the vertices of $W_{k,q}^\pi$ into $t+1$ groups according to their
distances from $w_\pi^{(k+1-a_i)}$. For an arbitrary $q\in
\mathbb{N}$ such that $q>|S(\pi)|+2$, we define the affine map
$D:W ^\pi_{k,q}\rightarrow \mathbb{R}^t$ by
$$D(x_1,x_2,\ldots,x_{k-1})=(x_{a_1}-x_{a_1-1}-\varepsilon_{a_1},
x_{a_2}-x_{a_2-1}-\varepsilon_{a_2},\ldots
,x_{a_t}-x_{a_t-1}-\varepsilon_{a_t} ).$$
 Note that the vertices of the simplex $R^\pi_{k,q}$ are mapped by
 $D$ as follows

$$D(\mathbf{w}^{(j)}_\pi)=\left\{%
\begin{array}{cl}
    (q-|S(\pi)|-1)e_i, & \hbox{if $k+1-j=a_i\in C$;} \\
   \mathbf{0}, & \hbox{if $k+1-j \notin C.$} \\
\end{array}%
\right.    $$
Therefore, we conclude that
$$D(R^\pi_{k,q})=L_{t,q}=(q-|S(\pi)|-1)\cdot
\operatorname{conv}\{0,e_1,e_2,\ldots,e_{t}\} .$$ The image of
$W^\pi_{k,q}$ under the map $D$ consists of integer points
contained within the $t$-dimensional simplex $L_{t,q}$.

\begin{defn}[The distance coloring]\label{D:distcol} Assume that
$C=\{a_1,a_2,\ldots,a_t\}$ is an independent set of vertices in
$G_\pi$. For each $i=1,2,\ldots,t$, we define $A_i\subset
W^\pi_{k,q}$ by
\begin{equation}\label{E:bojenje}
A_i=\big\{\mathbf{x}\in
W^\pi_{k,q}:x_{a_i}-x_{a_i-1}-\varepsilon_{a_i}>
x_{a_j}-x_{a_j-1}- \varepsilon_{a_j}\textrm{ for all } j \neq
i\big\}.
\end{equation}
In other words, a vertex $\mathbf{x}$ belongs to $A_i$ if and only
if the $i$-th coordinate of $D(\mathbf{x})$ is the unique maximum
among all coordinates. The vertices from $A_i$ are colored by
$a_i$, and the rest of vertices of $W^\pi_{k,q}$ are colored by
$0$.
\end{defn}
Note that all vertices in $A_i$ can be colored by $a_i$, and $A_i$
consists of precisely those vertices $\mathbf{x}\in W^\pi_{k,q}$
that are closer to $\mathbf{w}^{(k+1-a_i)}_\pi$ than to
$\mathbf{w}^{(k+1-a_j)}_\pi$ with $a_j\in C$, $j\neq i$.
Therefore, we refer to the coloring defined by (\ref{E:bojenje})
as the \emph{distance coloring}.

Unfortunately, the distance coloring defined above is not a
well-defined Sperner's labeling for all permutations $\pi\in
\mathbb{S}_{k-1}$. It is a possible that there exist
$\mathbf{x}\in A_{i}$ colored by $a_i$, and $\mathbf{y}\in A_j$
colored by $a_j$, such that the edge $\mathbf{x}\mathbf{y}$
belongs to $G^\pi_{k,q}$. In terms of hypergraphs (see Subsection
2.2), the hyperedges $F(\mathbf{x},\pi)$ and $F(\mathbf{y},\pi)$
share a common vertex. To avoid such conflicts, we impose some
additional conditions on $\pi$.

\begin{defn}\label{D:goodset}
We say that \( C = \{a_1, a_2, \ldots, a_t\} \subset [k] \) is a
\textit{good set} for the permutation \( \pi \in \mathbb{S}_{k-1}
\) if it satisfies the following conditions:
\begin{itemize}
    \item[(a)] \(C\) is an independent set of vertices
    in the graph \(G_\pi\).
    \item[(b)] for any pair of elements $a_i, a_j \in C$ with
    $1<a_i<a_j<k$,
     the pairs \((a_i - 1, a_j)\) and \((a_j - 1, a_i)\)
     are interlaced in \(\pi\), meaning:
    \begin{itemize}
        \item[(i)] Between \(a_i - 1\) and \(a_j\),
         either \(a_i\) or \(a_j - 1\) appears, and
        \item[(ii)] Between \(a_j - 1\) and \(a_i\),
        either \(a_i - 1\) or \(a_j\) appears.
    \end{itemize}
\end{itemize}

If $a_t=1$, then for each $a_j\in C$, $a_j$ must appear in $\pi$
between $1$ and $a_{j}$-1. Similarly, if $a_t=k$, then for each
$a_j\in C$, $a_{j}-1$ must appear in $\pi$ between $a_j$ and
$k-1$.\end{defn} Note that $1$ and $k$ can not both belong to $C$
simultaneously.

\begin{prop}\label{P:whendistanceisgood} If $C=\{a_1,a_2,
\ldots,a_{t}\}\subset [k]$ is a good set for a permutation $\pi$,
the distance coloring of $W^\pi_{k,q}$ is a well-defined Sperner
labeling.
\end{prop}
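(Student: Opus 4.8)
The plan is to verify the two defining conditions (\ref{uslov1}) and (\ref{uslov2}) for the map $f$ that assigns color $a_i$ to every vertex in $A_i$ and color $0$ to all remaining vertices of $W^\pi_{k,q}$. Condition (\ref{uslov1}) is almost immediate: if $\mathbf{x}\in A_i$, then by the definition of $A_i$ the $i$-th coordinate of $D(\mathbf{x})$ is a (strict, hence positive, since the minimum value $0$ is always attained at the origin when $t\geq 2$, and the case $t=1$ is handled separately) maximum, so $x_{a_i}-x_{a_i-1}-\varepsilon_{a_i}>0$; by the characterization of admissible colors recalled just before Definition~\ref{D:distcol} (the hyperedge $F(\mathbf{x},\pi)$ can be $a_i$-monochromatic iff $x_{a_i-1}<x_{a_i}-\varepsilon_{a_i}$), this means $a_i\in L^\pi(\mathbf{x})$, so $f(\mathbf{x})\in L^\pi(\mathbf{x})\cup\{0\}$. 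Vertices colored $0$ satisfy (\ref{uslov1}) trivially.

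The heart of the argument is condition (\ref{uslov2}): I must show that no edge $\mathbf{x}\mathbf{y}$ of $G^\pi_{k,q}$ joins a vertex of $A_i$ to a vertex of $A_j$ with $i\neq j$. Recall from (\ref{ivica}) that such an edge has the form $\mathbf{y}=\mathbf{x}+e_{\pi_a}+e_{\pi_a+1}+\cdots+e_{\pi_{b-1}}$, i.e. $\mathbf{y}-\mathbf{x}=e_T$ for a set $T=\{\pi_a,\dots,\pi_{b-1}\}$ of entries contiguous in $\pi$. The effect of adding $e_T$ on the quantity $\delta_\ell(\mathbf{x}):=x_{a_\ell}-x_{a_\ell-1}-\varepsilon_{a_\ell}$ is: it increases by $1$ if $a_\ell\in T$ and $a_\ell-1\notin T$; it decreases by $1$ if $a_\ell-1\in T$ and $a_\ell\notin T$; and it is unchanged otherwise. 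Since $C$ is independent in $G_\pi$, the block $\{i,i+1,\dots,j-1\}$ is never contiguous in $\pi$ for distinct $a_i,a_j\in C$ unless one of them is $1$ or $k$ — I will need to use condition (a) precisely here to control which of $a_i, a_j, a_i-1, a_j-1$ can simultaneously lie in a contiguous set $T$. The combinatorial crux is: if $\delta_i(\mathbf{x})$ is the unique max at $\mathbf{x}$ but $\delta_j(\mathbf{y})$ is the unique max at $\mathbf{y}=\mathbf{x}+e_T$, then $\delta_i$ must drop and $\delta_j$ must rise (or at least $\delta_j-\delta_i$ must increase by at least $2$); tracing this forces $a_i-1\in T, a_i\notin T$ and $a_j\in T, a_j-1\notin T$ simultaneously, which I claim is exactly what the interlacing condition (b) forbids. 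Indeed, $a_i-1\in T$ and $a_i\notin T$ means the contiguous block $T$ contains $a_i-1$ but not $a_i$; $a_j\in T$ and $a_j-1\notin T$ means $T$ contains $a_j$ but not $a_j-1$. A contiguous subsequence of $\pi$ containing $a_i-1$ and $a_j$ but neither $a_i$ nor $a_j-1$ cannot exist when the interlacing conditions (i) and (ii) hold, because between the occurrences of $a_i-1$ and $a_j$ in $\pi$ one of $a_i, a_j-1$ must appear, and that element would have to lie inside any contiguous block containing both endpoints.

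I expect the main obstacle to be the careful bookkeeping of signs and the enumeration of boundary cases: when $a_\ell=1$ (so the "predecessor" index is $x_0=0$, which never changes and has $\varepsilon_1=0$) and when $a_\ell=k$ (so $x_{a_\ell}=x_k=q-1$ is constant, while $x_{k-1}$ is a genuine coordinate), the monotonicity bookkeeping above is slightly different, and these are precisely the cases covered by the last sentences of Definition~\ref{D:goodset}. I will handle $1\in C$ and $k\in C$ as separate short cases, using the stated extra hypotheses — e.g. if $a_t=1$, the requirement that $a_j$ appear between $1$ and $a_j-1$ in $\pi$ rules out a contiguous block $T$ witnessing a bad edge between $A_t$ and $A_j$ — and note (as the paper already remarks) that $1$ and $k$ cannot both lie in $C$, so these boundary cases do not interact. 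A further small point to check is that when $t=1$ there are no conflicting pairs at all, so the distance coloring is trivially a Sperner labeling, and when $t\geq 2$ the value $\delta_i(\mathbf{x})>0$ strictly, which is what makes the induced hypergraph labeling genuinely monochromatic on the corresponding hyperedges. Assembling these verifications establishes that $f$ satisfies (\ref{uslov1}) and (\ref{uslov2}), hence is a proper Sperner labeling of $G^\pi_{k,q}$.
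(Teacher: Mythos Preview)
Your approach is essentially the same as the paper's: the paper's proof also sets up the inequalities $\delta_i(\mathbf{x})>\delta_j(\mathbf{x})\geq 0$ and $\delta_j(\mathbf{y})>\delta_i(\mathbf{y})\geq 0$, recalls the edge description (\ref{ivica}), and then asserts in one sentence that under the good-set conditions it is impossible for $\delta_j$ to increase by one while $\delta_i$ simultaneously decreases by one along such an edge. Your write-up simply unpacks this assertion in full---tracking how adding $e_T$ changes each $\delta_\ell$, deducing that a bad edge would force $a_i-1,a_j\in T$ and $a_i,a_j-1\notin T$, and then invoking the interlacing condition (b) (and its boundary analogues for $1$ or $k$ in $C$) to rule this out---so the two proofs coincide, with yours supplying the details the paper leaves implicit.
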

\begin{proof}
Assume that $\mathbf{x}\in A_i$ and $\mathbf{y}\in A_j$
($\mathbf{x}$, $\mathbf{y}$ are colored by $a_i$ and $a_j$,
respectively). From the definition of the distance coloring we
know that
$$x_{a_i}-x_{a_i-1}-\varepsilon_{a_i}>
x_{a_j}-x_{a_j-1}-\varepsilon_{a_j}\geq 0,\textrm{ and}$$$$
y_{a_j}-y_{a_j-1}-\varepsilon_{a_j}>
y_{a_i}-y_{a_i-1}-\varepsilon_{a_i}\geq 0.$$ Recall that
$\mathbf{xy}$ is an edge of $G^\pi_{k,q}$ if and only if
$$\mathbf{y}=\mathbf{x}+e_{\pi_a}+
e_{\pi_{a}+1}+\cdots+e_{\pi_{b-1}}, \textrm{ for some }1\leq a<b
\leq k.$$ However, under the conditions described in Definition
\ref{D:distcol}, it is impossible for
$x_{a_j}-x_{a_j-1}-\varepsilon_{a_j}$ to increase by one, while
simultaneously $x_{a_i}-x_{a_i-1}-\varepsilon_{a_i}$ decreases by
one. Therefore, $\mathbf{x}\mathbf{y}$ cannot be an edge of
$G^\pi_{k,q}$, whenever $\mathbf{x}$ and $\mathbf{y}$ are assigned
different colors by the distance coloring.
\end{proof}

Some permutations admit a large good set. For
$k=2n$ and $\pi=24\ldots(2n-2)(2n-1)(2n-3)\ldots 3 1$, the set
$C=\{1,3,\ldots,2n-1\}$ is good for $\pi$. Therefore, there exists
a distance coloring of $W^\pi_{k,q}$ with $n=\frac k 2$ colors.

The next theorem characterizes all permutations that admit a good
two-element set of colors.
\begin{thm}\label{T:imadvadobra} A permutation
$\pi\in \mathbb{S}_{k-1}$ has a good two-element set of colors if
and only if $\pi$ is not $r$-invariant.
\end{thm}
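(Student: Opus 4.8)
The plan is to prove both directions by combining the combinatorial characterization of good sets (Definition~\ref{D:goodset}) with the structural description of $r$-invariant permutations (Remark~\ref{R:Blocks} and Theorem~\ref{T:rinv}). First I would set up the problem: a two-element set $C=\{a,b\}$ with $a<b$ is good for $\pi$ exactly when (a) $ab\notin E(G_\pi)$, i.e.\ $\{a,a+1,\dots,b-1\}$ is \emph{not} contiguous in $\pi$, and (b) the interlacing conditions hold --- between $a-1$ and $b$ one of $a$ or $b-1$ appears, and between $b-1$ and $a$ one of $a-1$ or $b$ appears (with the stated modifications when $1$ or $k$ belongs to $C$). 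So the theorem amounts to: such a pair $\{a,b\}$ exists iff $\pi$ is not $r$-invariant.

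For the \emph{forward} direction (contrapositive: $r$-invariant $\Rightarrow$ no good $2$-set), I would use the rigid block structure of $r$-invariant permutations. By Theorem~\ref{T:rinv}, an $r$-invariant $\pi$ has $cs_1(\pi)=k$, meaning $G_\pi\cong C_k$, so the only non-edges of $G_\pi$ are exactly the non-adjacent pairs on the $k$-cycle; the cyclic structure forces any candidate pair $\{a,b\}$ to be ``far apart''. Then I would show the interlacing condition (b) must fail. The key point is that an $r$-invariant permutation is, up to the rotation action, the permutation $\pi$ from Remark~\ref{R:Blocks} with $\pi_{im}=i$ (indices mod $k$); its ``arithmetic-progression'' placement of values means that for any two values $a<b$, the relative cyclic positions of $a-1,a,b-1,b$ are rigidly determined, and one checks directly that one of conditions (i), (ii) is violated --- intuitively, because the positions of $a-1$ and $a$ differ by the same step $m$ as those of $b-1$ and $b$, the two ``gaps'' $(a-1,a)$ and $(b-1,b)$ are parallel rather than interlaced. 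I would phrase this cleanly using the geometric/cyclic interpretation of $r$-invariance rather than grinding through index arithmetic, and reduce the general $r$-invariant case to the canonical representative by $D_k$-equivariance (Theorem~\ref{T:izopriD_k} and the equivariance of $\pi\mapsto G_\pi$).

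For the \emph{converse} ($\pi$ not $r$-invariant $\Rightarrow$ a good $2$-set exists), I would argue constructively. Since $\pi$ is not $r$-invariant, either $G_\pi\not\cong C_k$ (so $G_\pi$ has a non-crossing diagonal by the dissection theorem, giving a non-edge pair with some room), or $G_\pi\cong C_k$ but the permutation, while simple, is not one of the $\varphi(k)$ rotation-fixed ones. In the first case I would pick a suitable non-edge $\{a,b\}$ bounding (or adjacent to) a small region of the dissection and verify the interlacing conditions hold --- here the freedom in how blocks are arranged inside a region gives exactly the interlacing needed. In the second (simple, not $r$-invariant) case, I would use that failure of $r$-invariance means the residues $\{a, 2a, \dots\}$ do not form a complete system, or more usefully that the ``position-shift'' pattern is not the rigid arithmetic one; pick $a$ with $\pi_a = k-1$ and track where consecutive values land, extracting a pair whose gaps genuinely interlace. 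I expect the converse to be the main obstacle, because one must produce a good pair uniformly across the rather diverse family of non-$r$-invariant permutations; the cleanest route is probably to contrapose again and show that \emph{if no good $2$-set exists then every pair of value-gaps is ``parallel,''} and then deduce from this parallelism condition that the position function $\pi^{-1}$ must be an affine (arithmetic-progression) map modulo $k$, which is precisely $r$-invariance. That reduction --- no good pair $\Rightarrow$ all gaps parallel $\Rightarrow$ affine position map $\Rightarrow$ $r$-invariant --- is the heart of the argument, and I would spend the bulk of the proof making the ``parallel gaps'' bookkeeping precise, likely via the sorted-family description in Remark~\ref{R:rasporedusorted} where the conditions translate into statements about the intervals $\{j : i\in S_j\}$.
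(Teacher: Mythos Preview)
Your overall strategy---contrapositive for both directions, with the hard direction being ``no good $2$-set $\Rightarrow$ $r$-invariant''---matches the paper's. The forward direction is disposed of quickly in the paper (one sentence citing Remark~\ref{R:Blocks}), and your plan for it is fine.

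Where you diverge is in the execution of the converse. You propose an abstract ``parallel gaps $\Rightarrow$ affine position map'' argument, possibly via the sorted-family description. The paper instead works entirely with the block decomposition of $\pi$ (Remark~\ref{R:blocksinpi}) and tests very specific candidate pairs: first $\{1, a_i+1\}$ and $\{1, a_i+2\}$ for each block $B_i$, then symmetrically pairs involving $k$. Each failure of goodness for such a pair pins down one concrete fact about where blocks begin and end relative to one another (e.g.\ every block $B_i$ with $i>1$ must begin before $B_1$; the element $1$ must sit between the first two elements of any longer block). From these, the paper extracts properties (5)--(8) constraining how any two blocks can interleave, forces the blocks to be almost equicardinal, and then reads off that $\pi$ is exactly the permutation of Remark~\ref{R:Blocks}.

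Your ``parallel gaps'' heuristic is morally the same phenomenon, but the paper's route is more concrete and avoids the need to formalize an ``affine position map'' or to pass through sorted families. The key idea you are missing is to exploit the \emph{special} colors $1$ and $k$ first: because Definition~\ref{D:goodset} simplifies when $1$ or $k$ is in $C$, testing those pairs gives immediate constraints on block endpoints, and the general interlacing conditions then finish the job. If you attempt your abstract route, be careful: the step ``all gaps parallel $\Rightarrow$ $\pi^{-1}$ is affine mod $k$'' is not automatic and would need its own argument, whereas the block-by-block method reaches $r$-invariance directly.
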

\begin{proof}
It follows from Remark \ref{R:Blocks} that an $r$-invariant
permutation does not admit a good $2$-element set of colors. Let
$\pi=\pi_1\pi_2\ldots \pi_{k-1}\in \mathbb{S}_{k-1}$ be a
permutation that does not admit $\{a,b\}$ as a good set of colors,
for any choice of $a$ and $b$. Assume that $\pi$ has $m$ blocks,
i.e., $S(\pi)=\{a_1,a_2,\ldots,a_{m-1}\}$. The blocks of $\pi$ are
determined by $S(\pi)$, see Remark \ref{R:blocksinpi}.
\begin{itemize}
   \item[(1)] For all $i>1$ the block $B_i$ begins before $B_1$
   ; that is $a_{i}+1$ appears before $1$ in $\pi$.
    Otherwise, if $a_{i}+1$ appears between $1$ and $a_i$, then
    $\{1,a_{i}+1\}$ is a good set for $\pi$.
    \item[(2)] If a block $B_i$ (with $i>1$) has more than one element,
     then $1$ lies
    between $a_{i}+1$ and $a_{i}+2$ (the first and second
    elements of $B_i$).
    Otherwise, $\{1,a_{i}+2\}$ is a good set for $\pi$.
\end{itemize}
In a similar way, considering $k$ as a possible color, we conclude
that
 \begin{itemize}
    \item[(3)] For all $i<m$ the block $B_i$ ends after $B_m$,
   that is $k-1$ appears before all $a_i\in
    S(\pi)$.
     \item[(4)] If a block $B_i$ (for $i<m$) has more than one element,
      then $k-1$ lies
    between $a_{i}-1$ and $a_{i}$ (the last two
    elements of $B_i$).
\end{itemize}
From $(1)-(4)$ it follows that
     \begin{itemize}
    \item[(5)] There are no two elements of a block $B_i$ appearing
    before the
    beginning of any other block $B_j$.
    \item[(6)] There are no two elements of a block $B_i$ appearing
    after the end
    of any other block $B_j$.
\end{itemize}
Since $\{a,b\}$ is not a good set for the permutation $\pi$, it
follows that $\pi$ does not contain a subsequence of the form
$$\cdots (a-1)\cdots (b-1)\cdots b \cdots a \cdots \textrm{ or }
\cdots (b-1)\cdots (a-1) \cdots a \cdots b\cdots. $$ Therefore, we
conclude the following
  \begin{itemize}
    \item[(7)] There are no two elements of a block $B_i$ that appear
    between
    two consecutive elements of another block $B_j$.
    \end{itemize}
    From $(1)-(7)$ we conclude that the blocks of $\pi$ are almost
    equicardinal.
    Therefore, if $k=tm+s$, then $s-1$ blocks of $\pi$ have $t+1$
elements, and $m-s+1$ blocks have $t$ elements. Furthermore, from
$(1)$ and $(4)$, we know that $\pi_m=1$ and $\pi_{s}=k-t$.

Since $\{a,b\}$ is not a good set for the permutation $\pi$, it
follows that $\pi$ does not contain a subsequence of the form
$$\cdots a\cdots b\cdots (b-1) \cdots (a-1)\cdots \textrm{ or}
\cdots b\cdots a\cdots (a-1) \cdots (b-1)\cdots. $$ Therefore, we
conclude the following
\begin{itemize}
    \item[(8)] If a block $B_i$ ends after $B_j$, then $B_{i+1}$ begins
    after
    $B_{j+1}$. Similarly, if $B_i$ ends before $B_j$,
    then $B_{i+1}$ begins
before
    $B_{j+1}$.
\end{itemize}

Note that the block $B_1$ ends before exactly $s-1$ blocks
(namely, all blocks of length $t+1$). From $(8)$, it follows that
the block $B_2$ begins before exactly $s-1$ blocks. Therefore, we
deduce that $\pi_{m-s}=t+1$. Continuing in the same manner, we
conclude that $\pi$ is the $r$-invariant permutation described in
Remark \ref{R:Blocks}.

\end{proof}

\subsection{Counting non-colored hyperedges in distance colorings}
We now determine the number of vertices of $W^\pi_{k,q}$ that are
colored with $0$ in the distance coloring described in Definition
\ref{D:distcol}. Observe that an integer point
$\mathbf{d}=(d_1,d_2,\ldots,d_t)$ of $L_{t,q}=D(W^\pi_{k,q})$ can
be interpreted as a weak composition of $m=d_1+d_2+\cdots+d_t$
into $t$ summands.
\begin{lem}\label{L:inverzzaD} Let
$\mathbf{d}=(d_1,d_2,\ldots,d_t)\in L_{t,q}$ be an integer point
such that $d_1+d_2+\cdots+d_t=m$. Then
$$|D^{-1}(\mathbf{d})|={k+q-|S(\pi)|-t-2-m\choose k-t-1}.$$
\end{lem}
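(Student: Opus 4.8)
The plan is to reduce the computation of the fibre $D^{-1}(\mathbf{d})$ to a routine enumeration of weak compositions, by recognising $D$ as the map that reads off $t$ prescribed entries of such a composition. First I would recall the change of variables used in the proof of Theorem~\ref{T:izopriD_k}. With the conventions $x_0=0$ and $x_k=q-1$, assign to $\mathbf{x}=(x_1,\ldots,x_{k-1})\in W^\pi_{k,q}$ its gap vector $g_i=x_{i+1}-x_i$ for $i=0,1,\ldots,k-1$. The inequalities defining $W^\pi_{k,q}$ say precisely that $g_i\ge 0$ for all $i$, that $g_i\ge 1$ for $i\in S(\pi)$, and that $g_0+g_1+\cdots+g_{k-1}=q-1$. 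Hence $h_i:=g_i-[i\in S(\pi)]$ sets up a bijection between $W^\pi_{k,q}$ and the set of weak compositions $(h_0,h_1,\ldots,h_{k-1})$ of $N:=q-|S(\pi)|-1$ into $k$ nonnegative parts (this is, up to reindexing, the bijection $\varphi_\pi$ from that proof).

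Next I would check that $D$ merely selects $t$ of these parts. Since $g_{a_l-1}=x_{a_l}-x_{a_l-1}$ and $\varepsilon_{a_l}=[a_l-1\in S(\pi)]$ --- the latter holding for $2\le a_l\le k-1$ by the definition of $\varepsilon$, and for $a_l\in\{1,k\}$ because then $\varepsilon_{a_l}=0$ while $a_l-1\in\{0,k-1\}$ never lies in $S(\pi)\subseteq[k-2]$ --- we get
\[ (D(\mathbf{x}))_l=g_{a_l-1}-\varepsilon_{a_l}=g_{a_l-1}-[a_l-1\in S(\pi)]=h_{a_l-1} \]
for every $l=1,\ldots,t$. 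As the elements of $C$ are pairwise distinct, $\{a_1-1,\ldots,a_t-1\}$ is a $t$-element subset of $\{0,1,\ldots,k-1\}$.

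It then follows that, for $\mathbf{d}=(d_1,\ldots,d_t)\in L_{t,q}$ with $d_1+\cdots+d_t=m$, the bijection above identifies $D^{-1}(\mathbf{d})$ with the set of weak compositions $(h_0,\ldots,h_{k-1})$ of $N$ into $k$ nonnegative parts that satisfy $h_{a_l-1}=d_l$ for all $l$. These $t$ prescribed coordinates account for $m$ of the total (and $m\le N$ because $\mathbf{d}\in L_{t,q}$), so the remaining $k-t$ coordinates range freely over all weak compositions of $N-m$ into $k-t$ nonnegative parts, of which there are ${(N-m)+(k-t)-1\choose(k-t)-1}$. Substituting $N=q-|S(\pi)|-1$ rewrites this as ${k+q-|S(\pi)|-t-2-m\choose k-t-1}$, which is the asserted value of $|D^{-1}(\mathbf{d})|$.

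The only delicate point is the index bookkeeping behind the identity $(D(\mathbf{x}))_l=h_{a_l-1}$: one has to cover the boundary values $a_l=1$ and $a_l=k$ via the conventions $\varepsilon_1=\varepsilon_k=0$, $x_0=0$, $x_k=q-1$, and one has to be sure that the gaps forced to be positive by the defining inequalities of $W^\pi_{k,q}$ are exactly those indexed by $S(\pi)$. Everything else is a standard stars-and-bars count; note that the good-set hypothesis on $C$ is irrelevant here --- only the distinctness of $a_1,\ldots,a_t$ is used.
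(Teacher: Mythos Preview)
Your proof is correct and follows essentially the same approach as the paper's: both arguments observe that the condition $D(\mathbf{x})=\mathbf{d}$ pins down $t$ of the consecutive gaps $x_{a_l}-x_{a_l-1}$, leaving a free choice of the remaining coordinates that amounts to a weak-composition count of $N-m$ into $k-t$ parts. Your version is simply more explicit---writing out the bijection to weak compositions (which, as you note, is exactly $\varphi_\pi$ up to the index shift $i\mapsto i-1$) and carefully handling the boundary cases $a_l\in\{1,k\}$---whereas the paper compresses this into a reference back to the transformation in Proposition~\ref{P:numberofhyperedges}.
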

\begin{proof}Note that for each $\mathbf{x}\in D^{-1}(\mathbf{d})$, the
differences
$$x_{a_i}-x_{a_i-1}=d_i+\varepsilon_{a_i}\textrm{, for all }i \in [t]$$
are fixed. Therefore, the coordinates
$x_{a_1},x_{a_2},\ldots,x_{a_t}$ are uniquely determined by their
predecessors. By the same reasoning as in Proposition
\ref{P:numberofhyperedges}, there are $q-|S(\pi)|-m$ possibile
values to assign to the remaining $k-t-1$ weakly increasing
coordinates.
\end{proof}
\noindent The preceding lemma provides the number of vertices of
$W^\pi_{k,q}$ that map to a given point $\mathbf{d}$. Importantly,
this number depends only on the sum $m$, and not on the particular
weak composition itself. A vertex $\mathbf{x}\in W^\pi_{k,q}$ is
non-colored under the distance coloring if and only if the image
$D(\mathbf{x})$ has at least two maximal coordinates.
Equivalently, these vertices correspond to weak compositions with
at least two maximal summands. Therefore, for each
$m=0,1,\ldots,q-|S(\pi)|-1$, we must count the number of weak
compositions of $m$ into $t$ parts that contain at least two
maximal summands.

For $n,t,a,b\in \mathbb{N}$, let $C(n,t,a,b)$ denote the number of
compositions of $n$ into $t$ summands each lying between $a$ and
$b$. These numbers are introduced and studied in \cite{slo}.

\begin{prop}
The number of weak compositions of an integer $m$ into $t$
summands with at least two maximal summands, denoted by $X^t_m$,
is given by
$$X^t_m=\sum_{d=\lceil\frac m t \rceil}^{\lfloor \frac m 2\rfloor}
\sum_{i=2}^{\lfloor\frac m d \rfloor}{t \choose
i}C(m+t-i(d+1),t-i,1,d).$$
\end{prop}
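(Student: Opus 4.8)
The plan is to count the non-colored vertices by organizing them according to the value of the common sum $m$ of the coordinates of $D(\mathbf{x})$, and then, within each level set, according to the common value $d$ of the maximal summand and the number $i\geq 2$ of summands that attain this maximum. Fix $\pi$ and a good set $C=\{a_1,\ldots,a_t\}$. By Lemma~\ref{L:inverzzaD}, the fibre $D^{-1}(\mathbf{d})$ over an integer point $\mathbf{d}\in L_{t,q}$ with coordinate sum $m$ has size $\binom{k+q-|S(\pi)|-t-2-m}{k-t-1}$, which depends only on $m$. Hence the total number of non-colored vertices is
\[
N=\sum_{m=0}^{q-|S(\pi)|-1} X_m^t \binom{k+q-|S(\pi)|-t-2-m}{k-t-1},
\]
and everything reduces to establishing the stated closed form for $X_m^t$, the number of weak compositions of $m$ into $t$ parts having at least two maximal parts.

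First I would fix the maximal value $d$ attained by a weak composition counted by $X_m^t$. Since at least two parts equal $d$ and all parts are $\leq d$, we need $2d\leq m$, so $d\leq\lfloor m/2\rfloor$; and since $t$ parts each $\leq d$ must sum to $m$, we need $td\geq m$, so $d\geq\lceil m/t\rceil$. This gives the outer summation range. Next, for a fixed $d$, I would stratify by the exact number $i$ of parts equal to $d$; this forces $i\geq 2$ and $id\leq m$, i.e. $i\leq\lfloor m/d\rfloor$, giving the inner range. Choosing \emph{which} $i$ of the $t$ positions hold the value $d$ contributes the factor $\binom{t}{i}$; since all such choices are disjoint (a composition has a well-defined set of maximal positions once $d$ and the multiset of values are fixed — more precisely, the pair $(d,i)$ together with the placement determines the maximal positions, and distinct $(d,i,\text{placement})$ give distinct compositions), the count is additive over them.

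It then remains to count, for fixed $d$ and a fixed set of $i$ positions holding $d$, the number of ways to fill the remaining $t-i$ positions with values summing to $m-id$, each value lying strictly between $0$ and $d$, i.e. in $\{1,\ldots,d-1\}$? Here is the one subtle point, and I expect it to be the main obstacle: the remaining parts must be $\leq d$ but we must \emph{not} require them to be $<d$, because a weak composition with, say, three parts equal to $d$ is counted once in the $i=3$ stratum, not in the $i=2$ stratum — so once we declare ``exactly $i$ parts equal $d$'', the other $t-i$ parts must lie in $\{0,1,\ldots,d-1\}$. Wait — but the formula invokes $C(m+t-i(d+1),\,t-i,\,1,\,d)$, the number of compositions into parts between $1$ and $d$. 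Reconciling this requires the standard shift $a_j\mapsto a_j+1$ sending weak parts in $\{0,\ldots,d-1\}$ to parts in $\{1,\ldots,d\}$; under this shift a sum of $m-id$ over $t-i$ parts becomes a sum of $m-id+(t-i)=m+t-i(d+1)+(i-i)$... let me recompute: $m-id+(t-i)=m+t-i(d+1)$. So the number of admissible fillings is exactly $C(m+t-i(d+1),t-i,1,d)$, matching the formula. The obstacle is purely bookkeeping: making sure the ``exactly $i$ maximal parts'' stratification is correct (no overcounting across different $d$, since $d$ is the true maximum; no overcounting across $i$, since $i$ is the true multiplicity of the maximum) and that the index shift is applied with the correct bounds so that a part equal to $0$ in the weak composition does not accidentally become a part equal to $d$ (it becomes $1$, which is fine, and a part equal to $d-1$ becomes $d$, the allowed maximum in the $C$-count). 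Assembling these three steps — choosing $d$, choosing $i$ and the $\binom{t}{i}$ positions, and counting the fillings via $C(\cdot,\cdot,1,d)$ after the shift — yields the claimed identity for $X_m^t$, and hence the proposition.
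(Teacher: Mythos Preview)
Your argument is correct and follows essentially the same route as the paper: fix the maximal value $d$, then the multiplicity $i\geq 2$ of that value, choose the $\binom{t}{i}$ positions, and count the remaining $t-i$ parts in $\{0,\ldots,d-1\}$ via the shift $a_j\mapsto a_j+1$ to land in $C(m+t-i(d+1),t-i,1,d)$. The only remark is that your opening paragraph about $D$, the fibres, and the sum $N$ belongs to the \emph{next} result (Theorem~\ref{T:brneobojenih}) rather than to this proposition, which is a self-contained combinatorial identity; you can drop that preamble here.
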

\begin{proof}
The maximal summand (of which there are at least two) in the
considered weak compositions of $m$ lies between $\lceil\frac m t
\rceil$ and $\lfloor \frac m 2\rfloor$. A fixed maximal summand
$d$ can appear at most $\lfloor\frac m d \rfloor$ times. If the
maximal summand $d$ appears exactly $i$ times, then the positions
of these summands can be chosen in ${t\choose i}$ ways. The
remaining $t-i$ summands, each strictly less than $d$, form a weak
composition of $m-id$. These weak compositions are in bijection
(after increasing each summand by $1$) with compositions of
$m+t-i(d+1)$ into $t-i$ parts, each lying between $1$ and $d$.
\end{proof} The following theorem is an immediate consequence of Proposition
\ref{P:whendistanceisgood} and Lemma \ref{L:inverzzaD}.

\begin{thm}\label{T:brneobojenih} Assume that $\pi\in \mathbb{S}_{k-1}$
admit a good set with $t$ elements. Then the number of non-colored
vertices in $W^\pi_{k,q}$ under the distance coloring with $t$
colors is
$$\sum_{d=0}^{q-|S(\pi)|-1}X^t_d{k+q-|S(\pi)|-t-2-d\choose k-t-1}$$
\end{thm}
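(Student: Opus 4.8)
The plan is to combine the three ingredients that are already in place: (1) the distance coloring is a well-defined Sperner labeling whenever $C$ is a good set of size $t$, by Proposition~\ref{P:whendistanceisgood}; (2) a vertex $\mathbf{x}\in W^\pi_{k,q}$ receives the color $0$ precisely when the image $D(\mathbf{x})$ has at least two coordinates attaining the maximum, which as noted just before the statement means $D(\mathbf{x})$ is a weak composition with at least two maximal summands; and (3) the fibers of $D$ are counted by Lemma~\ref{L:inverzzaD}, which says that every integer point $\mathbf{d}\in L_{t,q}$ with coordinate sum $d$ has exactly $\binom{k+q-|S(\pi)|-t-2-d}{k-t-1}$ preimages in $W^\pi_{k,q}$, a number depending only on $d$.

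First I would partition the set of non-colored vertices of $W^\pi_{k,q}$ according to the value $d = d_1+d_2+\cdots+d_t$ of the coordinate sum of $D(\mathbf{x})$. Since $D(W^\pi_{k,q}) = L_{t,q} = (q-|S(\pi)|-1)\cdot\operatorname{conv}\{0,e_1,\ldots,e_t\}$, the sum $d$ ranges over $0,1,\ldots,q-|S(\pi)|-1$. For a fixed $d$, the non-colored vertices mapping to level $d$ are exactly those $\mathbf{x}$ with $D(\mathbf{x})$ a weak composition of $d$ into $t$ parts having at least two maximal parts; by definition there are $X^t_d$ such target points $\mathbf{d}$.

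Next I would invoke Lemma~\ref{L:inverzzaD}: each of these $X^t_d$ target points has exactly $\binom{k+q-|S(\pi)|-t-2-d}{k-t-1}$ preimages, and this count is uniform over all weak compositions of $d$ (it depends only on $d$, not on the composition). Hence the number of non-colored vertices at level $d$ is $X^t_d\binom{k+q-|S(\pi)|-t-2-d}{k-t-1}$. Summing over $d$ from $0$ to $q-|S(\pi)|-1$ gives the stated formula. I should also recall that, by the correspondence of Section~2 (equations~(\ref{uslov1}) and~(\ref{uslov2})), the vertices of $W^\pi_{k,q}$ colored $0$ are exactly the non-monochromatic hyperedges of $H^\pi_{k,q}$, so this is genuinely the count Problem~1a asks for.

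There is essentially no obstacle here: the theorem is flagged in the text as "an immediate consequence" of the two cited results, and the proof is a one-line double-counting argument organized by the level sets of $D$. The only point requiring a word of care is the legitimacy of using the distance coloring at all — that is, that it really is a proper Sperner labeling — which is exactly what the hypothesis "$\pi$ admits a good set with $t$ elements" together with Proposition~\ref{P:whendistanceisgood} supplies; without it the formula would count something that is not an admissible labeling.

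\begin{proof}
Since $\pi$ admits a good set $C=\{a_1,\ldots,a_t\}$, Proposition~\ref{P:whendistanceisgood} guarantees that the distance coloring of $W^\pi_{k,q}$ is a well-defined Sperner labeling, so counting its non-colored vertices is meaningful. By definition of the distance coloring, a vertex $\mathbf{x}\in W^\pi_{k,q}$ is colored $0$ if and only if the point $D(\mathbf{x})\in L_{t,q}$ does not have a unique maximal coordinate, i.e. (reading $D(\mathbf{x})=(d_1,\ldots,d_t)$ as a weak composition of $d:=d_1+\cdots+d_t$ into $t$ parts) if and only if this weak composition has at least two maximal summands.

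Partition the non-colored vertices according to the value $d$, which ranges over $0,1,\ldots,q-|S(\pi)|-1$ because $D(W^\pi_{k,q})=L_{t,q}=(q-|S(\pi)|-1)\cdot\operatorname{conv}\{0,e_1,\ldots,e_t\}$. For a fixed $d$, there are exactly $X^t_d$ integer points $\mathbf{d}\in L_{t,q}$ with $\sum_i d_i=d$ having at least two maximal summands. By Lemma~\ref{L:inverzzaD}, each such point $\mathbf{d}$ has
$$|D^{-1}(\mathbf{d})|=\binom{k+q-|S(\pi)|-t-2-d}{k-t-1}$$
preimages in $W^\pi_{k,q}$, a number that depends only on $d$. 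Hence the number of non-colored vertices with $\sum_i D(\mathbf{x})_i=d$ equals $X^t_d\binom{k+q-|S(\pi)|-t-2-d}{k-t-1}$. Summing over all admissible $d$ yields
$$\sum_{d=0}^{q-|S(\pi)|-1}X^t_d\binom{k+q-|S(\pi)|-t-2-d}{k-t-1},$$
as claimed.
\end{proof}
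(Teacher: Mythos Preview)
Your proof is correct and follows exactly the approach the paper indicates: the paper does not write out a proof at all, merely stating that the theorem ``is an immediate consequence of Proposition~\ref{P:whendistanceisgood} and Lemma~\ref{L:inverzzaD}.'' You have supplied precisely the double-counting argument over the level sets of $D$ that makes this immediacy explicit, and there is nothing to add or correct.
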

From the above theorem, we conclude that the number of non-colored
vertices depends only on the parameters $k,q,t$ and $|S(\pi)|$.
For the case $t=2$, the values of $X^t_m$ simplify to
\begin{equation}\label{E:Xzat=2}
    X^2_m=\left\{%
\begin{array}{ll}
    0, & \hbox{if $m$ is odd;} \\
    1, & \hbox{if $m$ is even.} \\
\end{array}%
\right.
\end{equation}
Therefore, we can determine the exact number $N(k,q,s)$ of
non-colored vertices in the distance coloring with two colors of
$W^\pi_{k,q}$, for all permutations with $s$ adjacent inversions.

Combining Theorem \ref{T:brneobojenih} and equation
(\ref{E:Xzat=2}), it follows that
$$N(k,q,s)=\sum_{m=0}^{\left\lfloor\frac{q-s-1}{2}\right\rfloor}
{k+q-s-4-2m\choose k-3}\textrm{, which implies }$$
\begin{equation}\label{E:brneobojenihRR}
N(k,q,s)=N(k-1,q,s)+N(k,q-1,s).
\end{equation}
The ''boundary values'' for $N(k,q,s)$ are given by
$$N(k,s+2,s)= k-2, \textrm{ and }
N(s+3,q,s)=\sum_{m=0}^{\left\lfloor\frac{q-s-1}{2}\right\rfloor}
{q-1-2m\choose s} .$$
\begin{thm}  If
$\pi\in \mathbb{S}_{k-1}$ is not an $r$-invariant permutation,
then there exists a coloring of vertices of $H^{\pi}_{k,q}$ that
produces more monochromatic hyperedges than the greedy coloring.
\end{thm}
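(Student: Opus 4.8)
The plan is to exhibit, for every non-$r$-invariant $\pi$, the distance coloring with two colors and to check that it strictly outperforms the greedy coloring. Since $\pi$ is not $r$-invariant, Theorem~\ref{T:imadvadobra} yields a good two-element set $C=\{a,b\}\subset[k]$ for $\pi$, and then Proposition~\ref{P:whendistanceisgood} guarantees that the corresponding distance coloring of $W^{\pi}_{k,q}$ is a well-defined Sperner's labeling of $G^{\pi}_{k,q}$; by the correspondence established after~(\ref{uslov2}) it induces a Sperner's labeling of $H^{\pi}_{k,q}$ whose monochromatic hyperedges are exactly the colored vertices. By Theorem~\ref{T:brneobojenih} combined with~(\ref{E:Xzat=2}), the number of hyperedges left non-monochromatic by this labeling is $N(k,q,s)$, where $s=|S(\pi)|$. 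Hence it suffices to prove
$$N(k,q,s)<\binom{k+q-3-s}{k-2},$$
where the right-hand side will be identified as the number of non-monochromatic hyperedges of the greedy coloring of $H^{\pi}_{k,q}$.

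First I would pin down the greedy count for an arbitrary $\pi$. The key observation is that the final vertex of any hyperedge, $\mathbf v^{(k)}=\mathbf v+\mathbbm{1}$, has first coordinate $v_1+1\ge 1>0$, so $1\in L(\mathbf v^{(k)})$ and its greedy label $\min L(\mathbf v^{(k)})$ equals $1$. Consequently $F(\mathbf v,\pi)$ is monochromatic under the greedy coloring if and only if every vertex $\mathbf v^{(j)}$ also receives the label $1$, i.e. has positive first coordinate; since the first coordinate of $\mathbf v^{(j)}=\mathbf v+e_{\pi_{k-1}}+\cdots+e_{\pi_{k-j+1}}$ equals $v_1$ or $v_1+1$ and equals $v_1$ when $j=1$, this holds precisely when $v_1\ge 1$. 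Applying the coordinate substitution from the proof of Proposition~\ref{P:numberofhyperedges} — which leaves $v_1$ unchanged — to the extra constraint $v_1\ge 1$ shows that the number of monochromatic hyperedges of the greedy coloring is $\binom{k+q-3-s}{k-1}$, so the number of non-monochromatic ones is $\binom{k+q-2-s}{k-1}-\binom{k+q-3-s}{k-1}=\binom{k+q-3-s}{k-2}$. (For $s=0$ this recovers Proposition~\ref{greedy}.)

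It then remains to compare the two quantities, which is elementary. Writing $r=k-3$, the hockey-stick identity gives $\binom{k+q-3-s}{k-2}=\sum_{j=0}^{q-s-1}\binom{r+j}{r}$, a sum of $q-s$ strictly positive terms. On the other hand, since $k+q-s-4-2m=r+\bigl((q-s-1)-2m\bigr)$, the quantity $N(k,q,s)=\sum_{m=0}^{\lfloor(q-s-1)/2\rfloor}\binom{k+q-s-4-2m}{k-3}$ is the sum of those terms $\binom{r+j}{r}$ with $j=q-s-1,\ q-s-3,\ \ldots$, i.e. over exactly those $j\in\{0,1,\ldots,q-s-1\}$ having the same parity as $q-s-1$. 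Provided $q\ge s+2$ — the only range in which $H^{\pi}_{k,q}$ possesses a monochromatic hyperedge under any Sperner's labeling — there is at least one index $j$ of the opposite parity, hence at least one positive term $\binom{r+j}{r}\ge 1$ is omitted, and therefore $N(k,q,s)<\binom{k+q-3-s}{k-2}$. Thus the distance coloring gives a Sperner's labeling of $H^{\pi}_{k,q}$ with strictly more monochromatic hyperedges than the greedy coloring.

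I expect the main obstacle to be the first step, namely establishing the closed form $\binom{k+q-3-s}{k-2}$ for the number of non-monochromatic hyperedges of the greedy coloring of $H^{\pi}_{k,q}$; the crux there is the remark that $\mathbf v^{(k)}$ always forces the label $1$, which reduces monochromaticity under greedy to the single condition $v_1\ge1$. Once this is in hand, the comparison is just the parity/hockey-stick bookkeeping above. A secondary point requiring care is the range of $q$: for $q<|S(\pi)|+2$ the hypergraph carries no monochromatic hyperedge at all, so the statement is to be read for $q\ge|S(\pi)|+2$; for $q=|S(\pi)|+2$ the coloring in question is simply the two-coloring of $G_\pi$ by the good set $C$, and for $q>|S(\pi)|+2$ it is the distance coloring of Definition~\ref{D:distcol}.
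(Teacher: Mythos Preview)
Your proposal is correct and follows the paper's strategy: invoke Theorem~\ref{T:imadvadobra} for a good pair, apply the distance coloring, and compare $N(k,q,s)$ with the greedy count $\binom{k+q-3-s}{k-2}$. Your execution is in fact more complete than the paper's --- you supply a proof of the greedy count for general $\pi$ (the paper only cites Proposition~\ref{greedy}, which is stated for $\pi=\mathrm{Id}$) and replace the paper's induction via~(\ref{E:brneobojenihRR}) with an explicit hockey-stick/parity comparison.
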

\begin{proof} The number of
non-colored hyperedges of $H^{\pi}_{k,q}$ in the greedy coloring
is ${k+q-|S(\pi)|-3\choose k-2}$, see Proposition \ref{greedy}. By
Theorem \ref{T:imadvadobra} we know that there exists a pair of
colors $\{i,j\}$ that is good for $\pi$. Applying induction and
the recurrence relation (\ref{E:brneobojenihRR}), we conclude that
the number of non-colored vertices in the distance coloring with
two colors, $N(k,q,|S(\pi)|)$, is strictly less than in the
greedy coloring case.
\end{proof}

\subsection{The optimal distance coloring}
\begin{prop} Assume that $\{i,j\}$ is a good set for a
permutation $\pi\in \mathbb{S}_{k-1}$. Then, the distance coloring
of $W^\pi_{k,q}$ with $i$ and $j$ produces at least as many
monochromatic hyperedges in $H^\pi_{k,q }$ as any other coloring
by two colors.
\end{prop}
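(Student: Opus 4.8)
The plan is to pass to the line graph and argue entirely on $G^\pi_{k,q}$. By the equivalence set up in Section~2, a Sperner labeling of $H^\pi_{k,q}$ all of whose monochromatic hyperedges carry colors from a fixed two-element set $\{a,b\}$ is the same datum as a map $f\colon W^\pi_{k,q}\to\{0,a,b\}$ satisfying (\ref{uslov1}) and (\ref{uslov2}), and the number of monochromatic hyperedges equals $|W^\pi_{k,q}|-|f^{-1}(0)|$. By Theorem~\ref{T:brneobojenih} together with (\ref{E:Xzat=2}), the distance coloring with the good set $\{i,j\}$ (indeed with any good two-element set) has exactly $N(k,q,|S(\pi)|)$ non-colored vertices, and this number satisfies the recurrence (\ref{E:brneobojenihRR}). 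Hence the statement is equivalent to the inequality $|f^{-1}(0)|\ge N(k,q,|S(\pi)|)$ for every color pair $\{a,b\}$ and every such $f$.

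I would organize $W^\pi_{k,q}$ by the fibers of the affine map $D=D_{\{a,b\}}$ into the triangle $L_{2,q}$. By Lemma~\ref{L:inverzzaD} the fiber over a lattice point with coordinate sum $m$ has cardinality $g(m)={k+q-|S(\pi)|-4-m\choose k-3}$, which is weakly decreasing in $m$; and, writing $\psi(\mathbf{x})=D(\mathbf{x})_a-D(\mathbf{x})_b$, an inspection of how $D$ changes along an edge of $G^\pi_{k,q}$ (exactly as in the proof of Proposition~\ref{P:whendistanceisgood}) shows that each coordinate of $D$ changes by $-1$, $0$, or $+1$, and that the only edges which can join $\{\psi\ge 1\}$ to $\{\psi\le -1\}$ are precisely those ruled out by clause~(b) of Definition~\ref{D:goodset} — each of its two interlacing requirements kills one of the two possible configurations of such an edge. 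Thus for the good pair $\{i,j\}$ the distance coloring is the threshold coloring $\{\psi>0\}\mapsto i$, $\{\psi<0\}\mapsto j$, $\{\psi=0\}\mapsto 0$, and its set of non-colored vertices is $\bigsqcup_u D^{-1}(u,u)$, of size $\sum_{m\ \mathrm{even}}g(m)=N(k,q,|S(\pi)|)$, as it must be.

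For the optimality inequality I would argue that an arbitrary valid two-coloring $f$ can be transformed, without increasing $|f^{-1}(0)|$, into a threshold coloring of this kind. Using that the classes $f^{-1}(a)$ and $f^{-1}(b)$ are never joined by an edge, one straightens each class by recoloring connected components of a single color that reach across $\{\psi=0\}$, moving them toward the extreme values of $\psi$; the admissibility list (\ref{listaboja}) restricted to $\{a,b\}$ depends only on the signs of the two $D$-coordinates, which keeps the moves proper. Once the color classes lie on opposite sides of $\{\psi=0\}$ one has $f^{-1}(0)\supseteq\{\psi=0\}$, hence $|f^{-1}(0)|\ge|\{\psi=0\}|=N(k,q,|S(\pi)|)$. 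In parallel, the same conclusion should follow by an induction on $k+q$ modelled on (\ref{E:brneobojenihRR}): split $W^\pi_{k,q}$ along a coordinate hyperplane into a piece that is (a copy of) $G^\pi_{k,q-1}$ and a piece that, after deleting the extremal coordinate, is a line graph of the same shape for a permutation in $\mathbb{S}_{k-2}$, restrict $f$, apply the inductive hypothesis to each part, and add using (\ref{E:brneobojenihRR}); the base cases $q=|S(\pi)|+2$ (a single copy of $G_\pi$, in which at most one vertex can carry each of $a$, $b$) and $k=|S(\pi)|+3$ (the boundary value of $N$) are immediate.

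The main obstacle, in either route, is the control of the ``seam''. In the exchange argument it is showing that straightening a color component never forces the loss of a colored vertex — this is delicate exactly when a component touches a fiber on which one of the colors is inadmissible, and reduces to a monotone rearrangement (compression) inequality on the triangle $L_{2,q}$ weighted by the decreasing fiber sizes $g(m)$. In the inductive argument it is checking that the two restricted colorings are genuine valid Sperner labelings of the smaller line graphs: the list (\ref{listaboja}) of a boundary vertex depends on the conventions $x_0=0$, $x_k=q-1$, so when one of $a,b$ equals $1$ or $k$ the list can shrink upon passing to the smaller instance, and one must choose the hyperplane section compatibly with the colors in $\{a,b\}$. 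Finally, a pair $\{a,b\}$ that is not a good set is absorbed by the same scheme: if $\{a,b\}$ is not independent in $G_\pi$ then its two colors already conflict along an edge inside every copy of $G_\pi\subset G^\pi_{k,q}$, which only strengthens the bound, while if clause~(b) fails one compresses toward the distance coloring of the nearest good pair.
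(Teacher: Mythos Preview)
Your proposal is a plan with an explicitly unfinished core, and the difficulty you flag at the end---the ``seam'' in the straightening/compression step---is a genuine gap, not a routine detail. Concretely, you want to move a monochromatic component of $f^{-1}(a)$ that reaches into $\{\psi\le 0\}$ toward larger $\psi$ without increasing $|f^{-1}(0)|$; but on the fibers where the color $a$ is inadmissible (those with $D(\mathbf x)_a=0$) the target vertices cannot accept color $a$, and nothing you have written explains why the number of vertices you are forced to decolor there is compensated elsewhere. The weighted compression inequality on $L_{2,q}$ you allude to is not standard and would itself need proof. Your alternative inductive route has a parallel gap: as you note, restricting to a coordinate hyperplane can shrink the admissible list when $a$ or $b$ equals $1$ or $k$, and you do not show how to choose the section compatibly in all cases.

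The paper sidesteps every one of these issues with a single explicit injection. Given an arbitrary two-coloring $\ell$ with colors $a,b$, set $X=\ell^{-1}(a)$, $Y=\ell^{-1}(b)$, and let $A,B\subset W^\pi_{k,q}$ be the two open half-spaces $\{\psi>0\}$, $\{\psi<0\}$ for the \emph{same} pair $\{a,b\}$ (good or not). Define $F\colon X\cup Y\to A\cup B$ by $F(\mathbf v)=\mathbf v$ on $(A\cap X)\cup(B\cap Y)$, $F(\mathbf v)=\mathbf v-e_a$ on $X\setminus A$, and $F(\mathbf v)=\mathbf v-e_b$ on $Y\setminus B$. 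The admissibility of the color $a$ at $\mathbf v$ guarantees $\mathbf v-e_a\in W^\pi_{k,q}$, and one checks that the shift lands in $B$ (respectively $A$); injectivity then follows because a collision between the shifted images of an $a$-colored and a $b$-colored vertex would force $\mathbf v$ and $\mathbf v+e_a$ (or $\mathbf v+e_b$) to carry different nonzero colors while being adjacent in $G^\pi_{k,q}$, violating (\ref{uslov2}), and a collision of two shifted images lands in $A\cap B=\emptyset$. Hence $|X|+|Y|\le|A|+|B|$, and by the symmetry of $R^\pi_{k,q}$ one has $|A|+|B|=|A_i|+|A_j|$ for the good pair $\{i,j\}$, which the distance coloring actually realizes. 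No recoloring, no induction, no seam.
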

\begin{proof}
Assume that $\ell :W^\pi_{k,q}\rightarrow \{a,b,0\}$ is a Sperner's
labeling of $W^\pi_{k,q}$ using two colors $a$ and $b$. Define the sets
$$X=\{\mathbf{x}\in W^\pi_{k,q} :\ell(\mathbf{x})=a\} \textrm{
and } Y=\{\mathbf{x}\in W^\pi_{k,q} :\ell(\mathbf{x})=b\},$$
consisting of the vertices labeled $a$ and $b$, respectively.
Further, we let\\ $A=\{\mathbf{x}\in W^\pi_{k,q}
:x_a-x_{a-1}-\varepsilon_a
>x_b-x_{b-1}-\varepsilon_b\}$ and $B=\{\mathbf{x}\in W^\pi_{k,q}
:x_a-x_{a-1}-\varepsilon_a <x_b-x_{b-1}-\varepsilon_b\}$. Define
the map $F:X\cup Y \rightarrow A\cup B$ by
$$F(\mathbf{v})=\left\{%
\begin{array}{ll}
    \mathbf{v}, & \hbox{if $\mathbf{v}\in A\cap X$
    or $\mathbf{v}\in B\cap Y$;} \\
    \mathbf{v}-e_a, & \hbox{if $\mathbf{v}\in X\setminus A$;} \\
     \mathbf{v}-e_b, & \hbox{if $\mathbf{v}\in Y\setminus B$.} \\
\end{array}%
\right.    $$ It is straightforward to verify that $F$ is an
injection. Hence, the number of colored hyperedges under the
labeling $\ell$ is at most $|A|+|B|$. Recall that $A_i$ and $A_j$
denote the sets of vertices colored by $i$ and $j$ in the distance
coloring with $i$ and $j$, see Definition \ref{D:distcol}. By the symmetry of the
simplex $R^\pi_{k,q}$, we have that $|A|+|B|=|A_i|+|A_j|$.
\end{proof}
For certain permutations, the distance coloring with two colors is
an optimal Sperner's labeling of $H^\pi_{k,q}$, maximizing the
number of monochromatic edges.
\begin{thm}For $i\in[k-1]$ such that $i\ne 1, k-1$,
and  $$\pi=(i-1)\, (i-2)\ldots 1\,  (i+1)\, (i+2)\ldots (k-1)\,
i\in \mathbb{S}_{k-1},$$ the distance coloring of $W^{\pi}_{k,q}$
with colors $1$ and $i+1$ maximizes the number of monochromatic
hyperedges among all Sperner labelings of $H^\pi_{k,q}$.
\end{thm}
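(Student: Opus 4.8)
The plan is to reduce to Sperner labelings that use only two colours, where the Proposition immediately preceding the theorem already gives the bound, and then to patch up the reduction using the very special shape of $\pi$.

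First I would extract the combinatorial data of $\pi=(i-1)(i-2)\ldots 1\,(i+1)(i+2)\ldots(k-1)\,i$. Computing $\pi^{-1}$ and invoking Proposition~\ref{P:adjinv=desc} gives $S(\pi)=D(\pi^{-1})=\{1,2,\ldots,i-2\}\cup\{i\}$, so $|S(\pi)|=i-1$ and $\varepsilon(\pi)$ has $\varepsilon_c=1$ exactly for $c\in\{2,\ldots,i-1\}\cup\{i+1\}$. From the definition of $G_\pi$ one reads off that $\{1,\ldots,i\}$ induces a complete graph $K_i$ (nested intervals inside the decreasing block $(i-1)\ldots 1$), that $\{i+1,\ldots,k\}$ induces a complete graph $K_{k-i}$ (nested intervals inside the increasing block $(i+1)\ldots(k-1)$), and that the only remaining edges are $\{1,k\}$, $\{i,k\}$ and $\{i,i+1\}$; thus $G_\pi$ is a $K_i$ and a $K_{k-i}$ glued to the two triangles $\{1,i,k\}$ and $\{i,i+1,k\}$. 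In particular $[k]$ is a union of two cliques of $G_\pi$, so $\alpha(G_\pi)=2$, and in any translate $\mathbf{w}+G_\pi\subseteq G^\pi_{k,q}$ at most one colour from $\{1,\ldots,i\}$ and at most one from $\{i+1,\ldots,k\}$ can be used. Finally, since $1\in C=\{1,i+1\}$ and $i+1$ occurs in $\pi$ between the occurrences of $1$ and of $i$ (because $\pi=\ldots 1\,(i+1)\ldots(k-1)\,i$), the set $C$ is good in the sense of Definition~\ref{D:goodset}; hence by Proposition~\ref{P:whendistanceisgood} the distance coloring of Definition~\ref{D:distcol} with $1$ and $i+1$ is a valid Sperner labeling leaving exactly $N(k,q,i-1)$ hyperedges non-monochromatic, and by the preceding proposition it is optimal among all labelings using at most two colours.

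It remains to show that more colours do not help. Starting from an arbitrary Sperner labeling $\ell$, I would build a labeling $\ell'$ using only the colours $1$ and $i+1$ with at least as many monochromatic hyperedges, after which the preceding proposition finishes the proof. Working in the weak-composition model of Theorem~\ref{T:izopriD_k} (so $\mathbf{x}\leftrightarrow\mathbf{a}=(a_1,\ldots,a_k)$ with $a_c=x_c-x_{c-1}-\varepsilon_c$, a vertex is colourable by $c$ iff $a_c\ge1$, and the colour classes are independent sets of $\Gamma^\pi_{k,q}$), note that for any proper $\ell$ the union $S_{\le i}:=\bigcup_{c\le i}S_c$ of all colour classes with $c\in\{1,\ldots,i\}$ is itself an independent set of $\Gamma^\pi_{k,q}$ — distinct colour classes never share an edge — and likewise $S_{>i}:=\bigcup_{c>i}S_c$, and there are no edges between $S_{\le i}$ and $S_{>i}$. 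One sets $\ell'$ to colour $1$ every vertex of $S_{\le i}$ admitting colour $1$ (i.e.\ with $a_1\ge1$), colour $i+1$ every vertex of $S_{>i}$ admitting colour $i+1$, and $0$ everything else; by the independence just noted $\ell'$ is a valid Sperner labeling using only $\{1,i+1\}$. The only vertices losing their colour from $\ell$ to $\ell'$ are those coloured by some $c\in\{2,\ldots,i\}$ with $x_1=0$ and those coloured by some $c\in\{i+2,\ldots,k\}$ lying on the facet of $R^\pi_{k,q}$ forbidding colour $i+1$. To compensate I would exploit that $\pi$ is strictly decreasing on $\{1,\ldots,i-1\}$ and strictly increasing on $\{i+1,\ldots,k-1\}$, which forces into $G^\pi_{k,q}$ all edges realizing the moves $e_1-e_c$ ($2\le c\le i$) and $e_{i+1}-e_c$ ($i+2\le c\le k$); pushing each lost vertex one step along such an edge, and using conditions (b)(i)--(ii) of Definition~\ref{D:goodset} for $\{1,i+1\}$ to keep the pushed vertices pairwise distinct and disjoint from the already-coloured part of $\ell'$, yields a matching of the lost vertices into newly available vertices, so $\ell'$ has at least as many monochromatic hyperedges as $\ell$.

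The main obstacle is precisely this compensation/injectivity step. Because $\pi$ is not an ``identity on each half'', $G^\pi_{k,q}$ genuinely contains the crossing (mixed) edges realizing $e_1-e_k$, $e_i-e_k$, $e_1+e_i-e_s-e_k$ and $e_1+e_{i+1}-e_s-e_t$, and one must rule out two kinds of collision: a vertex pushed out of a left colour class and one pushed out of a right colour class landing on the same composition, and a pushed vertex landing on a vertex already coloured $1$ or $i+1$ by $\ell'$. A collision of the first kind would exhibit in $\pi$ a subword of the form $(a{-}1)\cdots(b{-}1)\cdots b\cdots a$ or $a\cdots b\cdots(b{-}1)\cdots(a{-}1)$ for a left colour $a$ and a right colour $b$ governing the two pushed vertices, which is exactly what conditions (b)(i)--(ii) of Definition~\ref{D:goodset} forbid for the pair $\{1,i+1\}$ — and this is the only place the good-set hypothesis is used beyond well-definedness. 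Carrying out the short case analysis over which of the four mixed edge types occurs closes the argument.
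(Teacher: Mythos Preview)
Your structural analysis of $\pi$ and $G_\pi$ is correct, and the reduction programme is a natural idea, but the compensation step has a genuine gap that the good-set conditions do not close.

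First a minor point: $S_{\le i}=\bigcup_{c\le i}S_c$ is \emph{not} an independent set of $G^\pi_{k,q}$, because two vertices carrying the \emph{same} colour $c$ may well be adjacent. This does not affect the validity of $\ell'$ (all you need is that there are no edges between $S_{\le i}$ and $S_{>i}$, which does follow from (\ref{uslov2})), but it foreshadows the real problem.

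The real gap is the second kind of collision you list but do not resolve. Take a lost vertex $\mathbf{x}$ with $\ell(\mathbf{x})=c\in\{2,\ldots,i\}$ and $a_1=0$, and push it along $e_1-e_c$ to $\mathbf{x}'$. Since $\mathbf{x}\mathbf{x}'$ is an edge of $G^\pi_{k,q}$, condition (\ref{uslov2}) forces $\ell(\mathbf{x}')\in\{0,c\}$; nothing prevents $\ell(\mathbf{x}')=c$. In that case $\mathbf{x}'\in S_{\le i}$ with $a'_1=1$, so $\mathbf{x}'$ is already coloured $1$ in $\ell'$ and is \emph{not} newly available. A concrete instance: for $k=5$, $i=3$, $\pi=2143$, take the compositions $\mathbf{x}=(0,2,0,1,0)$ and $\mathbf{x}'=(1,1,0,1,0)$, both coloured $2$ in $\ell$; then $\mathbf{x}$ is lost and its push lands on $\mathbf{x}'$, which $\ell'$ already colours $1$. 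Your appeal to conditions (b)(i)--(ii) of Definition~\ref{D:goodset} cannot help here, since those conditions constrain only the pair $\{1,i+1\}$, whereas the collision involves the auxiliary colour $c$ and a vertex of the \emph{same} colour class. Iterated pushing might rescue this, but that is a different argument and its injectivity and validity would need separate justification.

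The paper avoids the intermediate labeling $\ell'$ altogether. It builds, for each $\mathbf{x}\in A_1$ and each $\mathbf{y}\in A_{i+1}$, an explicit clique $C^1_{\mathbf{x}}$, $C^{i+1}_{\mathbf{y}}$ of $G^\pi_{k,q}$, using precisely the shift vectors $e_1+\cdots+e_{j-1}$ and $e_{i+1}+\cdots+e_{j-1}$ that underlie your pushes. It then assigns \emph{every} coloured vertex $\mathbf{v}$ (not just the lost ones) to one of these cliques, with the rule depending on $\ell(\mathbf{v})$ and on whether $\mathbf{v}\in A_1$, $\mathbf{v}\in A_{i+1}$, or neither; two vertices sent to the same clique are adjacent, hence share a colour, hence coincide by the shift rule. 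This yields the bound $|A_1|+|A_{i+1}|$ directly. Your push is exactly the map the paper uses, but the paper applies it uniformly and targets the clique index set $A_1\cup A_{i+1}$, so the ``already coloured'' collision never arises: it is absorbed into the injectivity check rather than treated as a separate compensation.
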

\begin{proof}
Note that $\{1,i+1\}$ form an independent set of vertices in
$G_\pi$, and the conditions of Definition \ref{D:distcol} are
satisfied. In the distance coloring of $G^{\pi}_{k,q}$ the vertices
colored by $1$ and $i+1$ are respectively
$$A_1=\{\mathbf{v}\in W_{k,q}^\pi:v_1>v_{i+1}-v_i-1\}\textrm{,
}A_{i+1}=\{\mathbf{v}\in W_{k,q}^\pi:v_{i+1}-v_i-1>v_1\}.$$
 For
each vertex $\mathbf{x}\in A_1$, consider the clique $C^1_\mathbf{x}$, i.e.,
the subgraph of $G^{\pi}_{k,q}$ induced by the vertices
$$W^\pi_{k,q}\cap\{\mathbf{x},\mathbf{x}+e_{i+1},\mathbf{x}-e_1,
\mathbf{x}-e_1-e_2,\ldots,\mathbf{x}-e_1-e_2-\cdots-e_{i-1}\} .$$
Note that $C_\mathbf{x}^1$ is indeed a clique, see (\ref{ivica}).
 Similarly, for
each vertex $\mathbf{y}\in A_{i+1}$, consider the clique
$C^{i+1}_\mathbf{y}$, induced by
$$W^\pi_{k,q}\cap \{\mathbf{y},\mathbf{y}+e_1,\mathbf{y}-e_{i+1},
\mathbf{y}-e_{i+1}-e_{i+2}, \ldots,
\mathbf{y}-e_{i+1}-e_{i+2}-\cdots-e_{k-1}\} .$$

Consider any Sperner's labeling \(\ell : W^\pi_{k,q} \to [k] \cup
\{0\}\). We analyze the distribution of colored vertices among the
cliques:

\begin{itemize}
\item If \(\mathbf{v} \in A_1\), then \(\mathbf{v}\) belongs to
\begin{itemize}
    \item \(C^1_\mathbf{v}\), if \(\ell(\mathbf{v}) \in \{1, i+2, \ldots, k\}\);
    \item \(C^1_{\mathbf{v} - e_{i+1}}\), if \(\ell(\mathbf{v}) = i+1\);
    \item \(C^1_{\mathbf{v} + e_1 + \cdots + e_{j-1}}\), if \(\ell(\mathbf{v}) = j \in \{2, \ldots, i\}\).
\end{itemize}

\item If \(\mathbf{v} \in A_{i+1}\), then \(\mathbf{v}\) belongs
to
\begin{itemize}
    \item \(C^{i+1}_\mathbf{v}\), if \(\ell(\mathbf{v}) \in \{2, \ldots, i+1\}\);
    \item \(C^{i+1}_{\mathbf{v} - e_1}\), if \(\ell(\mathbf{v}) = 1\);
    \item \(C^{i+1}_{\mathbf{v} + e_{i+1} + \cdots + e_{j-1}}\), if \(\ell(\mathbf{v}) = j \in \{i+2, \ldots, k\}\).
\end{itemize}

\item If \(\mathbf{v} \in W^\pi_{k,q} \setminus (A_1 \cup
A_{i+1})\), then \(\mathbf{v}\) belongs to
\begin{itemize}
    \item \(C^{i+1}_{\mathbf{v} - e_1}\), if \(\ell(\mathbf{v}) = 1\);
    \item \(C^{1}_{\mathbf{v} + e_1 + \cdots + e_{j-1}}\), if \(\ell(\mathbf{v}) = j \in \{2, \ldots, i\}\);
    \item \(C^{1}_{\mathbf{v} - e_{i+1}}\), if \(\ell(\mathbf{v}) = i+1\);
    \item \(C^{i+1}_{\mathbf{v} + e_{i+1} + \cdots + e_{t-1}}\), if \(\ell(\mathbf{v}) = t \in \{i+2, \ldots, k\}\).
\end{itemize}
\end{itemize}

All of these cliques are well-defined and together cover the
entire set \(W^\pi_{k,q}\), accounting for every possible color
assignment to each vertex.

Moreover, since the families \(\{ C^1_\mathbf{x} \}_{\mathbf{x} \in A_1}\) and \(\{
C^{i+1}_\mathbf{y} \}_{\mathbf{y} \in A_{i+1}}\) comprise all such cliques, any
Sperner labeling \(\ell\) can assign colors to at most one vertex
within each clique. Hence, the maximum number of monochromatic
hyperedges in $H^\pi_{k,q}$ is $|A_1|+|A_{i+1}|$, which is
attained by the distance coloring with the colors $1$ and $i+1$.
\end{proof}
The above theorem resolves Problem $1a$ for the permutation
$$\pi=(i-1)\, (i-2)\ldots 1\,  (i+1)\, (i+2)\ldots (k-1)\, i.$$ By
Theorem \ref{T:izopriD_k}, it follows that a distance coloring
with two colors is optimal for all permutations obtained from
$\pi$ under the action of the dihedral group $D_k$.
\begin{cor}
For the permutation $\pi=1\, 3\, 4\, \ldots (k-1)\, 2$, the
maximum number of monochromatic hyperedges in $H^\pi_{k,q}$
achievable by any Sperner labeling is
$${k+q-3\choose k-1}- N(k,q,1).$$
\end{cor}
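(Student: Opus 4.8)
The plan is to recognize $\pi=1\,3\,4\,\ldots(k-1)\,2$ as the special permutation from the preceding theorem in the case $i=2$, so that the theorem directly applies and the optimal number of monochromatic hyperedges equals $|A_1|+|A_3|$. First I would verify that $\pi=1\,3\,4\,\ldots(k-1)\,2$ is exactly the permutation $(i-1)(i-2)\ldots 1\,(i+1)(i+2)\ldots(k-1)\,i$ with $i=2$: the prefix $(i-1)\ldots 1$ degenerates to just $1$, the middle block becomes $3\,4\,\ldots(k-1)$, and the final entry is $2$. Thus the theorem guarantees that the distance coloring of $W^\pi_{k,q}$ with colors $1$ and $i+1=3$ maximizes the number of monochromatic hyperedges of $H^\pi_{k,q}$, and that maximum is $|A_1|+|A_3|$.

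Next I would compute this maximum explicitly. The total number of hyperedges of $H^\pi_{k,q}$ is $\binom{k+q-2-|S(\pi)|}{k-1}$ by Proposition~\ref{P:numberofhyperedges}. Here $S(\pi)$ is the set of adjacent inversions: reading $\pi=1\,3\,4\,\ldots(k-1)\,2$, the integer $2$ appears after $3$ (indeed after everything up to $k-1$), so $2$ precedes... wait — $S(\pi)=\{i\in[k-2]: i+1$ appears before $i$ in $\pi\}$; since $3$ appears before $2$, we get $1\in S(\pi)$; for $i\ge 2$, $i+1$ appears before $i+2$ etc. in natural order, and $2$ comes last so no further adjacent inversions arise. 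Hence $|S(\pi)|=1$, and the number of hyperedges is $\binom{k+q-3}{k-1}$. The non-colored vertices under the distance coloring with two colors number exactly $N(k,q,|S(\pi)|)=N(k,q,1)$ by the discussion following Theorem~\ref{T:brneobojenih} (the count depends only on $k,q$ and $|S(\pi)|$). Therefore $|A_1|+|A_3| = \binom{k+q-3}{k-1} - N(k,q,1)$, which is the claimed value.

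The main obstacle — though it is minor — is the bookkeeping for $|S(\pi)|$: one must be careful that the permutation $1\,3\,4\,\ldots(k-1)\,2$ really has exactly one adjacent inversion, namely $1$, rather than misreading the pattern. A clean way is to note that $\pi$ has blocks $B_1=\{1\}$ and $B_2=\{2,3,\ldots,k-1\}$ (since $2,3,\ldots,k-1$ occur in the order $3,4,\ldots,k-1,2$ which is a single contiguous run of consecutive integers), so by Remark~\ref{R:blocksinpi} there are $m=2$ blocks and $|S(\pi)|=m-1=1$. Once this is pinned down, the rest is a direct substitution into the formulas already established, and invoking Theorem~\ref{T:izopriD_k} is not even needed since $\pi$ itself is the permutation treated by the theorem (with $i=2$); it only confirms consistency with the dihedral-orbit remark. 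I would present the proof as: (1) identify $\pi$ with the $i=2$ case; (2) compute $|S(\pi)|=1$ via the block decomposition; (3) apply the theorem to get the maximum $=|A_1|+|A_3|=\binom{k+q-3}{k-1}-N(k,q,1)$.

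\begin{proof}
The permutation $\pi=1\,3\,4\,\ldots(k-1)\,2$ is precisely the permutation
$(i-1)(i-2)\ldots 1\,(i+1)(i+2)\ldots(k-1)\,i$ from the previous theorem in the case $i=2$:
the decreasing prefix $(i-1)\ldots 1$ reduces to the single entry $1$, the increasing middle
block is $3\,4\,\ldots(k-1)$, and the last entry is $i=2$. Its blocks (Remark~\ref{R:blocksinpi})
are $B_1=\{1\}$ and $B_2=\{2,3,\ldots,k-1\}$, since the entries $2,3,\ldots,k-1$ occur in $\pi$
in the order $3,4,\ldots,k-1,2$, which is a contiguous run of consecutive integers. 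Hence $\pi$ has
$m=2$ blocks and $|S(\pi)|=m-1=1$.

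By the previous theorem (with $i=2$), the distance coloring of $W^\pi_{k,q}$ with colors $1$ and
$i+1=3$ maximizes the number of monochromatic hyperedges of $H^\pi_{k,q}$ among all Sperner
labelings, and this maximum equals $|A_1|+|A_3|$. By Proposition~\ref{P:numberofhyperedges}, the
total number of hyperedges of $H^\pi_{k,q}$ is $\binom{k+q-2-|S(\pi)|}{k-1}=\binom{k+q-3}{k-1}$.
Under the distance coloring with two colors, the number of non-colored hyperedges depends only on
$k,q$ and $|S(\pi)|$, and equals $N(k,q,|S(\pi)|)=N(k,q,1)$ by the discussion following
Theorem~\ref{T:brneobojenih}. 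Therefore
$$|A_1|+|A_3|=\binom{k+q-3}{k-1}-N(k,q,1),$$
which is the asserted maximum.
\end{proof}
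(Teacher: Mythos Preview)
Your approach is exactly the intended one: the corollary is the specialization $i=2$ of the preceding theorem, together with Proposition~\ref{P:numberofhyperedges} and the formula $N(k,q,|S(\pi)|)$ for the non-colored vertices. The paper gives no separate proof, so your derivation is precisely what is meant.

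One small slip to fix: your identification of $S(\pi)$ and of the blocks is off. In $\pi=1\,3\,4\,\ldots(k-1)\,2$ the element $1$ appears before $2$, so $1\notin S(\pi)$; it is $3$ appearing before $2$ that gives $2\in S(\pi)$. Correspondingly the blocks are $B_1=\{1,2\}$ and $B_2=\{3,\ldots,k-1\}$, not $\{1\}$ and $\{2,\ldots,k-1\}$ (the set $\{2,\ldots,k-1\}$ is contiguous in $\pi$ but does not appear in increasing order, hence is not a block in the sense of Remark~\ref{R:blocksinpi}). This does not affect your conclusion, since only $|S(\pi)|=1$ enters the formula, but the stated justification should be corrected.
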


\end{document}